\documentclass[a4paper, 10pt]{amsart}

\usepackage{amsthm}
\usepackage{amsmath}
\usepackage{amsfonts}
\usepackage{amssymb}
\usepackage{latexsym}

\usepackage{graphicx}

\usepackage{tabularx}
\usepackage[margin=1.2in]{geometry}
\usepackage{enumitem}
\usepackage{ae}
\usepackage[T1]{fontenc}

\newtheorem{teo}{Theorem}
\newtheorem{cor}{Corollary}

\newtheorem{rem}{Remark}
\newtheorem{prop}{Proposition}
\newtheorem{lem}{Lemma}

\newcommand{\R}{\mathbb{R}}

\newcommand{\N}{\mathbb{N}}
\begin{document}
\title[Asympt. analysis for radial sign-changing solutions of the B.-N. problem]{Asymptotic analysis for  radial sign-changing solutions of the Brezis-Nirenberg problem}
\author{Alessandro Iacopetti}
\date{}
\subjclass[2010]{35J91, 35J61 (primary), and 35B33, 35B40,  35J20 (secondary)} 
\keywords{Semilinear elliptic equations, critical exponent, sign-changing radial solutions, asymptotic behavior}
\thanks{Research partially supported by MIUR-PRIN project-201274FYK7\underline\ 005.}
\address[Alessandro Iacopetti]{Dipartimento di Matematica e Fisica, Universit\'a degli Studi di Roma Tre, L.go S. Leonardo Murialdo 1, 00146 Roma, Italy}
\email{iacopetti@mat.uniroma3.it}

\begin{abstract}
We study the asymptotic behavior, as $\lambda \rightarrow 0$, of least energy radial sign-changing solutions $u_\lambda$, of the Brezis-Nirenberg problem 

\begin{equation*}
\begin{cases}
-\Delta u = \lambda u + |u|^{2^* -2}u & \hbox{in}\ B_1\\
u=0 & \hbox{on}\ \partial B_1,
\end{cases}
\end{equation*}
where $\lambda >0$, $2^*=\frac{2n}{n-2}$ and $B_1$ is the unit ball of $\R^n$, $n\geq 7$.

We prove that both the positive and negative part $u_\lambda^+$ and $u_\lambda^-$ concentrate at the same point (which is the center) of the ball with different concentration speeds. 
Moreover we show that suitable rescalings of $u_\lambda^+$ and $u_\lambda^-$ converge to the unique positive regular solution of the critical exponent problem in $\R^n$.

Precise estimates of the blow-up rate of $\|u_\lambda^\pm\|_{\infty}$ are given, as well as asymptotic relations between $\|u_\lambda^\pm\|_{\infty}$ and the nodal radius $r_\lambda$.

Finally we prove that, up to constant,  $\lambda^{-\frac{n-2}{2n-8}} u_\lambda$ converges in $C_{loc}^1(B_1-\{0\})$ to $G(x,0)$, where $G(x,y)$ is the Green function of the Laplacian in the unit ball. 
\end{abstract}
\maketitle
\section{Introduction}
Let $n\geq 3$, $\lambda>0$ and $\Omega$ be a bounded open subset of $\R^n$ with smooth boundary. We consider the Brezis-Nirenberg problem 
\begin{equation}\label{PBN2}
\begin{cases}
-\Delta u = \lambda u + |u|^{2^* -2}u & \hbox{in}\ \Omega\\
u=0 & \hbox{on}\ \partial \Omega,
\end{cases}
\end{equation}
where $2^{*}=\frac{2n}{n-2}$ is the critical Sobolev exponent for the embedding of $H_0^1(\Omega)$ into $L^{2^*}(\Omega)$.
Problem (\ref{PBN2}) has been widely studied over the last decades, and many results for positive solutions have been obtained. 

The first existence result for positive solutions of (\ref{PBN2})  has been given by Brezis and Nirenberg in their classical paper \cite{2}, where, in particular the crucial role played by the dimension was enlightened. They proved that if $n\geq4$ there exist positive solutions of (\ref{PBN2}) for every $\lambda \in (0,\lambda_1(\Omega))$, where $\lambda_1(\Omega)$ denotes the first eigenvalue of $-\Delta$ on $\Omega$ with zero Dirichlet boundary condition. 
For the case $n=3$, which is more delicate, Brezis and Nirenberg \cite{2} proved that there exists $\lambda_*(\Omega)>0$ such that  positive solutions exist for every $\lambda \in (\lambda_*(\Omega),\lambda_1(\Omega))$.  When $\Omega=B$ is a ball they also proved that $\lambda_*(B)=\frac{\lambda_1(B)}{4}$ and 
a positive solution of (\ref{PBN2}) exists if and only if $\lambda \in (\frac{\lambda_1(B)}{4}, \lambda_1(B))$. Moreover, for more general bounded domains, they proved that if $\Omega \subset \R^3$ is strictly star-shaped about the origin there are no positive solutions for $\lambda$ close to zero.
We point out that weak solutions of (\ref{PBN2}) are classical solution. This is a consequence of a well known lemma of Brezis and Kato (see for instance Appendix B of \cite{STRUWE}).

The asymptotic behavior for $n\geq 4$, as $\lambda \rightarrow 0$, of positive solutions of (\ref{PBN2}), minimizing the Sobolev quotient, has been studied by Han \cite{HAN}, Rey \cite{Rey}. They showed, with different proofs, that such solutions blow up at exactly one point and they also determined the exact blow up rate as well as the location of the limit concentration points. 

Concerning the case of sign-changing solutions of (\ref{PBN2}), several existence results have been obtained if $n\geq 4$. In this case one can get sign-changing solutions for every $\lambda \in (0,\lambda_1(\Omega))$, or even $\lambda > \lambda_1(\Omega)$, as shown in the papers of Atkinson-Brezis-Peletier \cite{ABP2}, Clapp-Weth \cite{CW}, Capozzi-Fortunato-Palmieri \cite{CFP}. The case $n=3$ presents the same difficulties enlightened before for positive solutions and even more. In fact, differently from the case of positive solutions, it is not yet known, when $\Omega=B$ is a ball in $\R^3$, if there are sign-changing solutions of (\ref{PBN2}) when $\lambda$ is smaller than $\lambda_*(B)=\lambda_1(B)/4$. A partial answer to this question posed by H. Brezis has been given in \cite{AMP3}. 

The blow-up analysis of low-energy sign-changing solutions of (\ref{PBN2}) has been done by Ben Ayed-El Mehdi-Pacella \cite{AMP3},\cite{AMP7}. In \cite{AMP3} the authors analyze the case $n=3$. They introduce the number defined by
$$\bar\lambda(\Omega):= \inf \{\lambda \in \R^+; \ \hbox{Problem (\ref{PBN2}) has a sign-changing solution}\ u_\lambda, \hbox{with} \ \|u_\lambda\|_\Omega^2 - \lambda |u_\lambda|_{2,\Omega}^2 \leq 2S^{3/2}  \},$$
where $\|u_\lambda\|_\Omega^2 = \int_\Omega |\nabla u_\lambda|^2 \ dx$, $|u_\lambda|_{2,\Omega}^2 = \int_\Omega |u_\lambda|^2 \ dx$ and $S$ is the best Sobolev constant for the embedding $H_0^{1}(\Omega)$ into $L^{2*}(\Omega)$. To be precise they study the behavior of sign-changing solutions of (\ref{PBN2}) which converge weakly to zero and whose energy converges to $2S^{3/2}$ as $\lambda \rightarrow \bar \lambda(\Omega)$. They prove that these solutions blow up at two different points $\bar a_1$, $\bar a_2$, which are the limit of the concentration points $a_{\lambda,1}$, $a_{\lambda,2}$ of the positive and negative part of the solutions. Moreover the distance between $a_{\lambda,1}$ and $a_{\lambda,2}$ is bounded from below by a positive constant depending only on $\Omega$ and the concentration speeds of the positive and negative parts are comparable. This result shows that, in dimension 3, there cannot exist, in any bounded smooth domain $\Omega$, sign-changing low energy solutions whose positive and negative part concentrate at the same point.

In higher dimensions ($n\geq 4$), the same authors, in their paper \cite{AMP7}, describe the asymptotic behavior, as $\lambda \rightarrow 0$, of sign-changing solutions of (\ref{PBN2}) whose energy converges to the value $2S^{n/2}$. 
Even in this case they prove that the solutions concentrate at two separate points, but, they need to assume the extra hypothesis that the concentration speeds of the two concentration points are comparable, while in dimension three this was derived without any extra assumption (see Theorem 4.1 in \cite{AMP3}). They also describe in \cite{AMP7} the asymptotic behavior, as $\lambda \rightarrow 0$, of the solutions outside the limit concentration points proving that there exist positive constants $m_1, m_2$ such that
$$\lambda^{-\frac{n-2}{2n-8}}u_\lambda \rightarrow m_1 G(x,\bar a_1)-m_2 G(x,\bar a_2) \ \ \hbox{in} \ C_{loc}^2(\Omega - \{\bar a_1, \bar a_2\}), \ \hbox{if } n\geq 5,$$
$$\|u_\lambda\|_{\infty}u_\lambda \rightarrow m_1 G(x,\bar a_1)-m_2 G(x,\bar a_2) \ \ \hbox{in} \ C_{loc}^2(\Omega - \{\bar a_1, \bar a_2\}), \ \hbox{if } n= 4,$$
where $G(x,y)$ is the Green's function of the Laplace operator in $\Omega$.
So for $n\geq 4$ the question of proving the existence of sign-changing low-energy solutions (i.e. such that $\|u_\lambda\|_\Omega^2$ converges to $2S^{n/2}$ as $\lambda \rightarrow 0$) whose positive and negative part concentrate at the same point, was left open.

To the aim to contribute to this question as well as to describe the precise asymptotic behavior of radial sign-changing solutions we consider the Brezis-Nirenberg problem in the unit ball $B_1$, i.e.
 \begin{equation}\label{PBN}
\begin{cases}
-\Delta u = \lambda u + |u|^{2^* -2}u & \hbox{in}\ B_1\\
u=0 & \hbox{on}\ \partial B_1.
\end{cases}
\end{equation}

It is important to recall that Atkinson-Brezis-Peletier \cite{ABP}, Adimurthi-Yadava \cite{AY2}  showed, with different proofs, that for $n=3,4,5,6$ there exists $\lambda^*=\lambda^*(n)>0$ such that there is no radial sign-changing solution of (\ref{PBN}) for $\lambda \in (0,\lambda^*)$. Instead they do exist if $n\geq 7$, as shown by Cerami-Solimini-Struwe in their paper \cite{1}. In Proposition \ref{antani} (see also Remark \ref{rmk1}) we recall this existence result and get the limit energy of such solutions as $\lambda \rightarrow 0$.

In view of these results we analyze the case $n\geq7$ and $\lambda \rightarrow 0$. More precisely we consider a family $(u_\lambda)$ of least energy sign-changing solutions of (\ref{PBN}). It's easy to see that $u_\lambda$ has exactly two nodal regions. We denote by $r_\lambda \in (0,1)$ the node of $u_\lambda=u_\lambda(r)$ and, without loss of generality, we assume $u_\lambda(0)>0$, so that $u_\lambda^+$ is different from zero in $B_{r_\lambda}$ and $u_\lambda^-$ is different from zero in the annulus $A_{r_\lambda}:=\{x \in \R^n; \ r_\lambda < |x| <1\}$.  

 We set $M_{\lambda, +}:=\|u_\lambda^+\|_{\infty}$, $M_{\lambda, -}:=\|u_\lambda^-\|_{\infty}$, $\beta:=\frac{2}{n-2}$, $\sigma_\lambda:=M_{\lambda, +}^\beta r_\lambda$,  $\rho_\lambda:=M_{\lambda, -}^\beta r_\lambda$. Moreover, for $\mu >0$, $x_0 \in \R^n$, let $\delta_{x_0,\mu}$ be the function $\delta_{x_0,\mu}:\R^n \rightarrow \R$ defined by
\begin{equation}\label{stndbubble}
\delta_{x_0,\mu}(x):=\frac{[n(n-2)\mu^2]^{(n-2)/4}}{[\mu^2+|x-x_0|^2]^{(n-2)/2}}.
\end{equation}
Proposition \ref{PROP1} states that both $M_{\lambda, +}$ and $M_{\lambda, -}$ diverge, $u_\lambda$ weakly converge to $0$ and $\|u_\lambda^\pm\|_{B_1}^2 \rightarrow S^{n/2}$, as $\lambda \rightarrow 0$. The results  of this paper are contained in the following theorems.

\begin{teo}\label{mainteo}
Let  $n\geq 7$ and $(u_\lambda)$ be a family of least energy  radial sign-changing solutions of (\ref{PBN}) and $u_\lambda(0)>0$.
Consider the rescaled functions $\tilde u_\lambda^+(y):=\frac{1}{M_{\lambda, +}}u_\lambda^+\left(\frac{y}{M_{\lambda, +}^\beta}\right)$  in $B_{\sigma_\lambda}$, and $\tilde u_\lambda^-(y):=\frac{1}{M_{\lambda, -}}u_\lambda^-\left(\frac{y}{M_{\lambda, -}^\beta}\right)$  in $A_{\rho_\lambda}$, where $B_{\sigma_\lambda}:=M_{\lambda, +}^\beta B_{r_\lambda}$, $A_{\rho_\lambda}:=M_{\lambda, -}^\beta A_{r_\lambda}$. Then:
\begin{description}
\item[(i)] $\tilde u_\lambda^+ \rightarrow \delta_{0,\mu}$ in  $C_{loc}^2(\R^n)$ as $\lambda \rightarrow 0$, where $\delta_{0,\mu}$ is the  function defined in (\ref{stndbubble}) for $\mu=\sqrt{n(n-2)}$.
\item[(ii)] $\tilde u_\lambda^- \rightarrow \delta_{0,\mu}$ in $C_{loc}^2(\R^n-\{0\})$ as $\lambda \rightarrow 0$, where $\delta_{0,\mu}$ is the same as in (i).
\end{description}
\end{teo}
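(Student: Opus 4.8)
The plan is to treat the two rescalings separately but with the same strategy, namely to show that each rescaled function solves (up to lower-order terms) the critical equation $-\Delta v = v^{2^*-1}$ on its (expanding) domain, obtain enough compactness to pass to the limit, and then identify the limit by elliptic ODE analysis using the normalization built into the rescaling.

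\textbf{Step 1: the equations for the rescaled functions.} A direct change of variables shows that $\tilde u_\lambda^+$ satisfies $-\Delta \tilde u_\lambda^+ = \lambda M_{\lambda,+}^{-2\beta}\tilde u_\lambda^+ + (\tilde u_\lambda^+)^{2^*-1}$ in $B_{\sigma_\lambda}$ with $\tilde u_\lambda^+ \le \tilde u_\lambda^+(0)=1$, and analogously $\tilde u_\lambda^-$ satisfies $-\Delta \tilde u_\lambda^- = \lambda M_{\lambda,-}^{-2\beta}\tilde u_\lambda^- + (\tilde u_\lambda^-)^{2^*-1}$ in the annulus $A_{\rho_\lambda}$, with $\tilde u_\lambda^- \le 1$ and $\tilde u_\lambda^-=0$ on the inner sphere $\partial B_{\sigma_\lambda'}$ where $\sigma_\lambda' = M_{\lambda,-}^\beta r_\lambda = \rho_\lambda$; here one uses that $M_{\lambda,\pm}\to\infty$ by Proposition \ref{PROP1}, so the coefficient $\lambda M_{\lambda,\pm}^{-2\beta}\to 0$. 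One must also check that the domains exhaust $\R^n$ (resp. $\R^n\setminus\{0\}$): for the positive part $\sigma_\lambda=M_{\lambda,+}^\beta r_\lambda\to\infty$ and for the negative part $\rho_\lambda\to 0$ while the outer radius $M_{\lambda,-}^\beta\to\infty$; these facts should follow from the energy bounds $\|u_\lambda^\pm\|_{B_1}^2\to S^{n/2}$ together with a Pohozaev-type argument, and I expect they are established in the propositions preceding this theorem or can be extracted from them.

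\textbf{Step 2: uniform estimates and passage to the limit.} Since $0\le \tilde u_\lambda^\pm \le 1$, the right-hand sides are bounded in $L^\infty_{loc}$, so by standard elliptic $L^p$ and Schauder estimates $\tilde u_\lambda^+$ is bounded in $C^{2,\alpha}_{loc}(\R^n)$ and $\tilde u_\lambda^-$ in $C^{2,\alpha}_{loc}(\R^n\setminus\{0\})$ (away from the origin the inner hole $\rho_\lambda\to 0$ eventually leaves any compact set). Hence, up to a subsequence, $\tilde u_\lambda^+\to v^+$ in $C^2_{loc}(\R^n)$ and $\tilde u_\lambda^-\to v^-$ in $C^2_{loc}(\R^n\setminus\{0\})$, where $v^\pm$ are radial, $0\le v^\pm\le 1$, $v^+(0)=1$, and each solves $-\Delta v = v^{2^*-1}$ on its domain. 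To rule out that $v^\pm\equiv 0$ (i.e.\ loss of mass at infinity) one invokes the energy: the rescaling is $L^{2^*}$- and Dirichlet-norm invariant, so $\int_{B_R}|\nabla \tilde u_\lambda^\pm|^2\,dy$ is controlled by $\|u_\lambda^\pm\|_{B_1}^2\to S^{n/2}$, and a concentration-compactness / no-vanishing argument (using that $M_{\lambda,\pm}=\|u_\lambda^\pm\|_\infty$ is attained, for $u_\lambda^+$ at the origin) forces $v^+$ to be nontrivial; then $v^+$ is a positive radial solution of the critical equation on $\R^n$ bounded by $1$, hence by the classification of Caffarelli–Gidas–Spruck (or the ODE analysis of the radial equation) $v^+=\delta_{0,\mu}$ for some $\mu$, and the constraint $v^+(0)=1=[n(n-2)\mu^2]^{(n-2)/4}\mu^{-(n-2)}$, i.e.\ $[n(n-2)]^{(n-2)/4}=\mu^{(n-2)/2}$, gives exactly $\mu=\sqrt{n(n-2)}$, proving (i).

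\textbf{Step 3: the negative part and the main obstacle.} For $v^-$ the delicate point, and what I expect to be the main obstacle, is the behavior near the origin: a priori $v^-$ solves the critical equation only on $\R^n\setminus\{0\}$ and could have a singularity there. One shows the singularity is removable: since $0\le v^-\le 1$ is bounded, it lies in $L^{2^*}_{loc}$ and in $H^1_{loc}$ near $0$ (again by the energy bound, which is uniform up to the inner boundary), so the origin is a removable singularity for the equation with bounded right-hand side, and $v^-$ extends to a bounded radial solution on all of $\R^n$. Nontriviality of $v^-$ is handled as for $v^+$, using $\|u_\lambda^-\|_{B_1}^2\to S^{n/2}$ and that $\|\tilde u_\lambda^-\|_\infty=1$ is attained at some radius staying in a fixed compact set away from $0$ and $\infty$ (this requires knowing $\rho_\lambda\to 0$ and that the max is not escaping to infinity — an additional $L^\infty$-decay estimate near the outer boundary, or a Kelvin transform, may be needed here). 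Once $v^-$ is a bounded positive radial entire solution it equals $\delta_{0,\mu'}$, and matching the maximum value $1=\sup\delta_{0,\mu'}=[n(n-2)(\mu')^2]^{(n-2)/4}(\mu')^{-(n-2)}$ again forces $\mu'=\sqrt{n(n-2)}=\mu$, giving (ii). Finally, since the limits are unique and independent of the subsequence, the full families converge.
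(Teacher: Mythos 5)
Part (i) of your proposal follows essentially the paper's route: rescale, use the uniform bound and elliptic estimates to pass to a $C^2_{loc}(\R^n)$ limit, invoke the classification of positive entire solutions, and fix $\mu=\sqrt{n(n-2)}$ from $\tilde u_\lambda^+(0)=1$; the fact $\sigma_\lambda\to\infty$ that you defer to earlier results is indeed Proposition \ref{rodota}, proved by exactly the Pohozaev-type argument you guess at. The genuine gap is in your Step 3, i.e.\ part (ii). You base both the nontriviality of $v^-$ and the identification of $\mu'$ on the assumption that $\|\tilde u_\lambda^-\|_\infty=1$ is attained at radii staying in a fixed compact subset of $\R^n\setminus\{0\}$. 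This is false: the maximum of $\tilde u_\lambda^-$ sits at $\gamma_\lambda=M_{\lambda,-}^\beta s_\lambda$, and a central (and nontrivial) part of the paper's analysis, Lemmas \ref{lemmacassazione} and \ref{minneo} leading to Proposition \ref{asgamma}, shows that $\gamma_\lambda\to 0$, so the maximum point collapses onto the puncture. (Ruling out the alternatives $\gamma_\lambda\to\infty$ and $\gamma_\lambda\to\gamma_0>0$ uses the derivative estimate at the node of Lemma \ref{LEM1}, a contradiction with non-attainment of the Sobolev constant in the limiting domain, and a singularity analysis; none of this is a routine consequence of the energy bound, and your proposal does not supply a substitute.)

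Because the maximum escapes to the puncture, locally uniform convergence on $\R^n\setminus\{0\}$ gives you neither $v^-\not\equiv 0$ nor $\sup v^-=1$: a priori all the mass could concentrate at the origin and the limit could vanish, and in any case ``matching the maximum value'' is unjustified since $\sup\delta_{0,\mu'}$ is attained exactly at the point you have excised. The paper closes precisely this hole in two steps that are absent from your proposal: first the uniform decaying upper bound of Proposition \ref{rescpn} (the new estimate of Section 3), which by dominated convergence yields $\int_{\R^n}|\tilde u|^{2^*}\,dy=S^{n/2}$ and hence nontriviality and finite energy; second, Lemmas \ref{lemdel1} and \ref{lemdel2}, obtained by integrating the radial equation starting from $\gamma_\lambda$, which give $\tilde u(s)\to 1$ and $\tilde u'(s)\to 0$ as $s\to 0$. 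Only after that is the singularity removed (Lemma \ref{fedistrsol}) and the normalization $\tilde u(0)=1$ available to force $\mu=\sqrt{n(n-2)}$. Your removable-singularity remark (a bounded solution across a point extends) is correct as far as it goes and is even slightly slicker than the paper's cutoff argument, but without the boundary behavior at the puncture it cannot produce the value $1$ at the origin, so the decisive steps of (ii) are missing.
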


From this theorem we deduce that the positive and negative parts of $u_\lambda$ concentrate at the origin. Moreover, as a consequence of the preliminary results for the proof of Theorem \ref{mainteo}, we show that $M_{\lambda, +}$ and $M_{\lambda, -}$ are not comparable, i.e. $\frac{M_{\lambda, +}}{M_{\lambda, -}} \rightarrow +\infty$ as $\lambda \rightarrow 0$. Indeed we are able to determine the exact rate of $M_{\lambda, -}$ and an asymptotic relation between $M_{\lambda, +}$, $M_{\lambda, -}$ and the radius $r_\lambda$.

\begin{teo}\label{asestpropmain}
As $\lambda\rightarrow 0$ we have the following:
\begin{description}
\item[(i)] $M_{\lambda,+}^{2-2\beta} r_\lambda^{n-2}\lambda \rightarrow c(n)$,
\item[(ii)] $M_{\lambda,-}^{2-2\beta} \lambda \rightarrow c(n)$,
\item[(iii)] $\displaystyle \frac{M_{\lambda,-}^{2-2\beta}}{M_{\lambda,+}^{2-2\beta}r_\lambda^{n-2}} \rightarrow 1$,
\end{description}
where $c(n):=\frac{c_1^2(n)}{c_2(n)}$, $c_1(n):=\int_{0}^{\infty} \delta_{0,\mu}^{2^*-1}(s) s^{n-1}ds$, $c_2(n):=2\int_{0}^{\infty} \delta_{0,\mu}^{2}(s) s^{n-1}ds$, $\mu=\sqrt{n(n-2)}$.
\end{teo}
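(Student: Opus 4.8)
My plan is to extract the three asymptotic relations from the Pohozaev identity applied on each nodal region, combined with the $C^2_{loc}$ convergence established in Theorem \ref{mainteo}. The key mechanism is that the rescalings $\tilde u_\lambda^\pm$ converge to the standard bubble $\delta_{0,\mu}$, so the integrals $\int |u_\lambda^\pm|^{2^*-1}$ (appearing in the Pohozaev boundary term) and $\int |u_\lambda^\pm|^2$ (appearing in the linear term) can be computed to leading order by changing variables $y = M_{\lambda,\pm}^\beta x$ and passing to the limit, producing the constants $c_1(n)$ and $c_2(n)/2$. The exponents on $M_{\lambda,\pm}$ and $r_\lambda$ are exactly what one gets from tracking the Jacobian and the powers of $M_{\lambda,\pm}$ pulled out of $u_\lambda^\pm$ under these rescalings.

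First I would write the Pohozaev identity for $u_\lambda$ on the ball $B_{r_\lambda}$ (where $u_\lambda = u_\lambda^+ > 0$ except that $u_\lambda$ vanishes on $\partial B_{r_\lambda}$), getting a relation of the form
\begin{equation*}
\lambda \int_{B_{r_\lambda}} (u_\lambda^+)^2\,dx = \frac{1}{n}\,r_\lambda \int_{\partial B_{r_\lambda}} |\partial_\nu u_\lambda|^2\, d\sigma \cdot (\text{const}),
\end{equation*}
i.e. the $L^2$ term is balanced by a boundary term involving the normal derivative at $r_\lambda$. Since $u_\lambda$ is radial and $-\Delta u_\lambda = \lambda u_\lambda + |u_\lambda|^{2^*-2}u_\lambda$, the normal derivative $u_\lambda'(r_\lambda)$ can be expressed via the ODE: integrating the equation on $B_{r_\lambda}$ gives $-r_\lambda^{n-1}u_\lambda'(r_\lambda)\,\omega_{n-1} = \int_{B_{r_\lambda}} (\lambda u_\lambda^+ + (u_\lambda^+)^{2^*-1})\,dx$, and the dominant contribution on the right is $\int (u_\lambda^+)^{2^*-1}$, which rescales to $c_1(n) \cdot M_{\lambda,+}^{\beta(2^*-1) - n\beta} \cdot M_{\lambda,+} \cdot(\text{Jacobian powers})$. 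Doing the same on the annulus $A_{r_\lambda}$ for $u_\lambda^-$, and using that the convergence in Theorem \ref{mainteo}(ii) holds in $C^2_{loc}(\R^n - \{0\})$ to control the inner boundary term at $r_\lambda$ (the outer boundary $\partial B_1$ contributes a lower-order term, controlled by the decay of $\tilde u_\lambda^-$ and the Green-function asymptotics), I would obtain a second relation. Matching normal derivatives at $r_\lambda$ from the two sides (continuity of $u_\lambda'$ across the node, since $u_\lambda$ is $C^1$) gives the third relation.

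Concretely, the steps are: (1) establish the rescaled integral limits $\int_{B_{r_\lambda}}(u_\lambda^+)^q\,dx = (1+o(1))\, M_{\lambda,+}^{q - n\beta}\,\omega_{n-1}\int_0^\infty \delta_{0,\mu}^q(s)s^{n-1}\,ds$ for $q = 2$ and $q = 2^*-1$, uniformly using the $C^2_{loc}$ convergence plus a tail estimate (the bubble $\delta_{0,\mu}^q$ is integrable at infinity precisely when $q > n/(n-2)$, which holds for $q=2^*-1$; for $q=2$ one needs $n \geq 7$ so that $2 > n/(n-2)$ fails — wait, $2 > n/(n-2) \iff n > 2(n-2) \iff n < 4$, so for $n\geq 7$ the integral $\int \delta^2 s^{n-1}$ is genuinely borderline-divergent at $\infty$, hence the $L^2$ mass is not concentrated and one must be more careful); (2) plug these into the two Pohozaev identities; (3) use $C^1$-matching at $r_\lambda$; (4) solve the resulting algebraic system for the three stated limits.

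The main obstacle, as the parenthetical above flags, is the $L^2$-term: for $n \geq 7$ the integral $\int_0^\infty \delta_{0,\mu}^2(s)s^{n-1}\,ds$ over all of $\R^n$ would diverge, so $\int_{B_{r_\lambda}}(u_\lambda^+)^2\,dx$ is genuinely dominated by the region near $r = r_\lambda$ (i.e. near $|y| = \sigma_\lambda$ in rescaled variables), not by the bubble core — so the naive rescaling argument of step (1) fails for $q=2$ and instead the $L^2$ mass must be computed by a separate annular analysis, showing $\int_{B_{r_\lambda}}(u_\lambda^+)^2\,dx \sim (\text{const})\, M_{\lambda,+}^{2-2\beta} r_\lambda^{n-2}$ via the outer behavior of $\tilde u_\lambda^+$ (which looks like $c\,\mu^{(n-2)/2}|y|^{-(n-2)}$ for $|y|$ large), i.e. the $L^2$ norm is controlled by the "slow tail" of the bubble truncated at scale $\sigma_\lambda \to \infty$. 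Reconciling this tail computation with the definition $c_2(n) = 2\int_0^\infty \delta_{0,\mu}^2 s^{n-1}\,ds$ — which as written looks divergent — suggests that in the actual argument $c_2(n)$ is really shorthand for a renormalized/truncated quantity, or that the relevant integral is against a different power; I would need to track the precise truncation scale carefully, and this bookkeeping of the borderline $L^2$-term is where the real work lies. Everything else (Pohozaev, $C^1$-matching, the $\int(u^+)^{2^*-1}$ term) is routine given Theorem \ref{mainteo}.
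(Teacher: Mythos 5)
Your overall strategy (integrate the radial ODE to get the normal derivative, apply Pohozaev's identity on each nodal region, and compute the resulting integrals by rescaling to the bubble) is the same as the paper's, but the proposal as written contains two genuine errors, one of which makes you defer the decisive step to a problem that does not exist. The ``main obstacle'' you identify rests on an algebra slip: $2>\frac{n}{n-2}$ is equivalent to $2(n-2)>n$, i.e.\ $n>4$, not $n<4$. Hence for $n\geq 7$ (indeed for $n\geq 5$) the integral $\int_0^\infty \delta_{0,\mu}^2(s)s^{n-1}\,ds$ \emph{converges}, $c_2(n)$ is a perfectly finite constant, and no truncation, renormalization, or ``separate annular analysis'' of the $L^2$-mass is needed. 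The paper simply rescales $\int_{B_{r_\lambda}}(u_\lambda^+)^2$ to $M_{\lambda,+}^{-2\beta}\int_{B_{\sigma_\lambda}}(\tilde u_\lambda^+)^2$ and passes to the limit by dominated convergence, the domination coming from the uniform upper bounds of Propositions \ref{rescpp} and \ref{rescpn} (not merely from $C^2_{loc}$ convergence, which by itself gives no tail control). Since you explicitly state that ``this bookkeeping of the borderline $L^2$-term is where the real work lies'' and leave it unresolved, the proof is incomplete precisely at the point where the constant $c_2(n)$ must be produced.

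The second problem is that your bookkeeping of the Pohozaev identity on the annulus is backwards. For $u_\lambda^-$ the identity reads $\lambda\int_{A_{r_\lambda}}(u_\lambda^-)^2\,dx=\tfrac12\omega_n\{[(u_\lambda^-)'(1)]^2-[(u_\lambda^-)'(r_\lambda)]^2 r_\lambda^n\}$, and it is the \emph{outer} term at $\partial B_1$ that is dominant: one shows $M_{\lambda,-}|(u_\lambda^-)'(1)|\to c_1(n)$ by integrating the ODE from $s_\lambda$ to $1$ (again using the uniform bound to pass to the limit), while the inner term satisfies $\lambda^{-1}M_{\lambda,-}^{2\beta}[(u_\lambda^-)'(r_\lambda)]^2 r_\lambda^n\to 0$, which the paper proves by comparing with the corresponding quantity for $u_\lambda^+$ (using $(u_\lambda^-)'(r_\lambda)=-(u_\lambda^+)'(r_\lambda)$ and $M_{\lambda,-}/M_{\lambda,+}\to 0$). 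Your plan declares the outer boundary contribution ``lower-order'' and makes the $r_\lambda$-term the main one; with that identification the identity is unbalanced (the inner term is smaller than $\lambda\int(u_\lambda^-)^2$ by a factor $(M_{\lambda,-}/M_{\lambda,+})^{2\beta}$), so the ``second relation'' you would extract is false and (ii) would not follow. Note also that $C^2_{loc}(\R^n\setminus\{0\})$ convergence of $\tilde u_\lambda^-$ cannot control either boundary term directly, since in rescaled variables they sit at $|y|=\rho_\lambda\to 0$ and $|y|=M_{\lambda,-}^\beta\to\infty$, both outside every fixed compact subset of $\R^n\setminus\{0\}$; this is exactly why the paper relies on Lemma \ref{LEM1} and the uniform bound (\ref{Geulbnp}) at these points. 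Once (i) and (ii) are in place, (iii) is just their quotient, so the $C^1$-matching at the node is needed only in the auxiliary estimates, not as a separate third relation.
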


The last result we provide is about the asymptotic behavior of the functions $u_\lambda$ in the ball $B_1$, outside the origin. We show that, up to a constant, $\lambda^{- \frac{n-2}{2n-8}} u_\lambda$ converges in $C_{loc}^1(B_1-\{0\})$ to $G(x,0)$, where $G(x,y)$  is the Green function of the Laplace operator in $B_1$. 

\begin{teo}\label{greentea}
As  $\lambda \rightarrow 0$ we have $$\lambda^{-\frac{n-2}{2n-8}} u_\lambda \rightarrow \tilde c(n) G(x,0) \ \ \hbox{ in}\ C_{loc}^1(B_1-\{0\}),$$  
where $G(x,y)$ is the Green function for the Laplacian in the unit ball, $\tilde c(n)$ is the constant defined by
$\tilde c(n):=\omega_n \frac{c_2(n)^{\frac{n-2}{2n-8}}}{c_1(n)^{\frac{4}{2n-8}}} $, $\omega_n$ is the measure of the $(n-1)$-dimensional unit sphere $S^{n-1}$ and $c_1(n), c_2(n)$ are the constants appearing in Theorem  \ref{asestpropmain}.
\end{teo}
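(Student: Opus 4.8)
The plan is to combine the pointwise blow-up description of $u_\lambda$ near the origin, furnished by Theorems \ref{mainteo} and \ref{asestpropmain}, with a standard Green-representation argument away from the concentration point. First I would write, for every $x \in B_1 \setminus \{0\}$,
\begin{equation}\label{greenrep}
u_\lambda(x) = \int_{B_1} G(x,y)\left(\lambda u_\lambda(y) + |u_\lambda(y)|^{2^*-2}u_\lambda(y)\right)\, dy.
\end{equation}
The term $\lambda u_\lambda$ is lower order: since $\|u_\lambda\|_{B_1}$ is bounded (energies converge to $2S^{n/2}$), $\lambda |u_\lambda|_{1,B_1} \leq \lambda |B_1|^{1/2}|u_\lambda|_{2,B_1}=o\!\left(\lambda^{1-\varepsilon}\right)$, which after multiplication by $\lambda^{-\frac{n-2}{2n-8}}$ still tends to zero because $\frac{n-2}{2n-8}<1$ for $n\geq 7$. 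Hence the whole contribution comes from the nonlinear term, and I must compute the limit of
\begin{equation*}
\lambda^{-\frac{n-2}{2n-8}}\int_{B_1} G(x,y)\,|u_\lambda(y)|^{2^*-2}u_\lambda(y)\, dy.
\end{equation*}

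The heart of the matter is to show that $|u_\lambda|^{2^*-2}u_\lambda$, suitably normalized, converges to a Dirac mass at the origin. Splitting $B_1$ into $B_{r_\lambda}$ and $A_{r_\lambda}$, on each piece I change variables using the rescalings of Theorem \ref{mainteo}: on $B_{r_\lambda}$, set $y = z/M_{\lambda,+}^\beta$, so that $\int_{B_{r_\lambda}}(u_\lambda^+)^{2^*-1}dy = M_{\lambda,+}^{1-\beta(2^*-1)+\beta n \cdot 0}\cdots$ — more precisely $M_{\lambda,+}^{(2^*-1) - \beta n}\!\int_{B_{\sigma_\lambda}}(\tilde u_\lambda^+)^{2^*-1}dz$, and since $(2^*-1)-\beta n = 2^*-1 - \frac{2n}{n-2} = -1$ one gets a clean power $M_{\lambda,+}^{-1}$; by (i) of Theorem \ref{mainteo} and dominated convergence the integral converges to $\omega_n c_1(n)$ (after passing to radial coordinates, $c_1(n)=\int_0^\infty \delta_{0,\mu}^{2^*-1}(s)s^{n-1}ds$). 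The same computation on the annulus gives $M_{\lambda,-}^{-1}\omega_n c_1(n)(1+o(1))$ using (ii) of Theorem \ref{mainteo}. Since $G(x,\cdot)$ is smooth near $0$ and both bubbles concentrate at $0$, one replaces $G(x,y)$ by $G(x,0)$ up to an error that vanishes; the two contributions carry opposite signs because $u_\lambda^-$ enters with a minus, so
\begin{equation*}
\int_{B_1}|u_\lambda|^{2^*-2}u_\lambda\, dy = \omega_n c_1(n)\left(\frac{1}{M_{\lambda,+}} - \frac{1}{M_{\lambda,-}}\right)(1+o(1)).
\end{equation*}
Now I invoke Theorem \ref{asestpropmain}: from (ii), $M_{\lambda,-}^{2-2\beta} = c(n)\lambda^{-1}(1+o(1))$, hence $M_{\lambda,-}^{-1} = c(n)^{-\frac{1}{2-2\beta}}\lambda^{\frac{1}{2-2\beta}}(1+o(1))$, and since $2-2\beta = 2-\frac{4}{n-2} = \frac{2n-8}{n-2}$ the exponent is $\frac{n-2}{2n-8}$, matching the normalization. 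Because $M_{\lambda,+}/M_{\lambda,-}\to +\infty$, the term $1/M_{\lambda,+}$ is negligible compared with $1/M_{\lambda,-}$, so the bracket is $-M_{\lambda,-}^{-1}(1+o(1))$ — here I should be careful with signs; writing out $\frac{1}{M_{\lambda,+}}-\frac{1}{M_{\lambda,-}} = -\frac{1}{M_{\lambda,-}}(1 - \frac{M_{\lambda,-}}{M_{\lambda,+}})$ and using the ratio going to $0$ gives the leading term $-\frac{1}{M_{\lambda,-}}$, and re-examining orientation (recall $u_\lambda(0)>0$ means the positive bubble dominates, so $u_\lambda$ itself is, to leading order, a positive multiple of $G(x,0)$) one tracks the overall constant. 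Computing $\lambda^{-\frac{n-2}{2n-8}}\cdot\omega_n c_1(n)\cdot c(n)^{-\frac{n-2}{2n-8}}\lambda^{\frac{n-2}{2n-8}}$ and substituting $c(n)=c_1^2(n)/c_2(n)$ yields exactly $\tilde c(n) = \omega_n\, c_2(n)^{\frac{n-2}{2n-8}}/c_1(n)^{\frac{4}{2n-8}}$.

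Finally, to upgrade pointwise convergence to $C^1_{loc}(B_1\setminus\{0\})$ convergence, I differentiate \eqref{greenrep} under the integral sign: for $x$ in a compact $K \Subset B_1\setminus\{0\}$ and $y$ near $0$, $\nabla_x G(x,y)$ is uniformly bounded, and the same dominated-convergence argument applies to $\nabla_x u_\lambda$. Away from $0$, elliptic regularity (the equation is uniformly subcritical there since $u_\lambda\to 0$ in $C^0_{loc}(B_1\setminus\{0\})$, a consequence of Theorem \ref{mainteo}) bootstraps to uniform $C^{1,\alpha}$ bounds, so Ascoli–Arzelà promotes the convergence to $C^1_{loc}$. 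The main obstacle I anticipate is the careful bookkeeping of signs and normalizing powers — ensuring the minus sign from $u_\lambda^-$, the fact that the $M_{\lambda,-}$-term beats the $M_{\lambda,+}$-term, and the exact exponent $\frac{n-2}{2n-8}$ all conspire to produce the stated positive constant $\tilde c(n)$ multiplying $G(x,0)$ — together with justifying uniform integrability of $|u_\lambda|^{2^*-2}u_\lambda$ near $0$ so that only the two concentrating bubbles contribute and no mass escapes to the region $|y|\sim r_\lambda$ where the two parts meet.
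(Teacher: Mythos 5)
Your skeleton is the same as the paper's: Green representation, splitting into the linear and nonlinear contributions of $u_\lambda^{\pm}$, showing that after multiplication by $\lambda^{-\frac{n-2}{2n-8}}$ everything except the nonlinear term of $u_\lambda^-$ vanishes, and computing the surviving term from Theorem \ref{mainteo}, the pointwise bounds and Theorem \ref{asestpropmain} (ii), with the $C^1$ upgrade obtained by differentiating the representation formula. However, two steps are defective as written. First, the sign bookkeeping. Your parenthetical ``$u_\lambda(0)>0$ means the positive bubble dominates, so $u_\lambda$ is, to leading order, a positive multiple of $G(x,0)$'' is backwards: on a compact $K\subset B_1\setminus\{0\}$ one eventually has $K\subset A_{r_\lambda}$, so $u_\lambda<0$ there, and your own computation (the bracket being $-M_{\lambda,-}^{-1}(1+o(1))$ since $M_{\lambda,+}^{-1}\ll M_{\lambda,-}^{-1}$) shows the far field is carried by the \emph{negative} part. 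With the standard positive Green function of $-\Delta$ your argument yields the limit $-\tilde c(n)G(x,0)$; the plus sign in the statement is consistent only because the paper normalizes $G$ so that $G_{sing}(x,y)=\frac{1}{n(2-n)\omega_n}|x-y|^{2-n}<0$. The constant is recoverable, but not by the reasoning you give.

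Second, the terms in which $G(x,\cdot)$ is singular inside the support of $u_\lambda^-$ are not controlled by your estimates. For the linear term, the bound $\lambda|u_\lambda|_{1,B_1}\le\lambda|B_1|^{1/2}|u_\lambda|_{2,B_1}$ silently treats $G(x,\cdot)$ as bounded on $B_1$, which it is not: the singularity $y=x$ lies inside $A_{r_\lambda}$ for small $\lambda$, and H\"older against the available energy norms fails, since $|x-y|^{2-n}\notin L^{2}(B_1)$ for $n\ge 4$ and $\notin L^{(2^*)'}(B_1)$ for $n\ge 7$. Likewise, ``replacing $G(x,y)$ by $G(x,0)$ up to a vanishing error'' in the $(u_\lambda^-)^{2^*-1}$-integral is precisely the delicate point: the dangerous region is $y$ near $x$, where $G$ blows up while $(u_\lambda^-)^{2^*-1}$ is merely small, not the region $|y|\sim r_\lambda$ that you single out at the end (that region is easily handled by the decay bounds). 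Both issues require the uniform pointwise estimates (\ref{ubpp}) and (\ref{Geulbnp}) of Propositions \ref{rescpp} and \ref{rescpn}, combined with a splitting of the $y$-integration into $|y-x|<\eta$ and $|y-x|\ge\eta$ with $\eta=\eta(K)$, which is where the paper spends most of its effort; with those ingredients inserted, your outline closes and coincides with the paper's proof.
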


The proof of the above results are technically complicated and often rely on the radial character of the problem. We would like to stress that the presence of the lower order term $\lambda u$ makes our analysis quite different from that performed in \cite{3b} for low energy sign-changing solution of an almost critical problem.

Since we consider nodal solutions our results cannot be obtained by following the proofs for the case of positive solutions (\cite{HAN}, \cite{Rey}, \cite{10}, \cite{11}). In particular, in order to analyze the behavior of the negative part $u_\lambda^-$, which is defined in an annulus, we prove a new uniform estimate (Proposition \ref{lbnp} and Proposition \ref{rescpn}) which holds for any dimension $n\geq 3$ and is of its own interest (see Remark \ref{uupbld} and Proposition \ref{lbnpld}).

For the sake of completeness let us mention that our results, as well as those of \cite{3b}, show a big difference between the asymptotic behavior of radial sign-changing solutions in dimension $n>2$ and $n=2$. Indeed, in this last case, the limit problems as well as the limit energies of the positive and negative part of solutions are different (see \cite{GP}).

Finally we point out that, in view of the above theorems it is natural to ask whether solutions of (\ref{PBN2}) which behave like the radial ones exist in other bounded domains. More precisely, it would be interesting to show the existence of sign-changing solutions whose positive and negative part concentrate at the same point but with different speeds, each one carrying the same energy.

In the paper in preparation \cite{IACVAIR} we answer positively this question at least in the case of some symmetric domains in $\R^n$, $n\geq 7$.

We conclude observing that this type of bubble tower solutions have interest also for the associated parabolic problem, since, as proved \cite{MPS}, \cite{CDW}, \cite{DPS}, they induce a peculiar blow-up phenomenon for the initial data close to them.

The paper is divided into 6 sections. In the second one we give some preliminary results on radial sign-changing solutions. In Section 3 we prove estimates for solutions with two nodal regions and, in particular, prove the new uniform estimate of Proposition \ref{rescpn}.

In Section 4 we analyze the asymptotic behavior of the rescaled solutions and prove Theorem \ref{mainteo}.
Section 5 is devoted to the study of the divergence rate of $\|u_\lambda^\pm\|_\infty$, as $\lambda \rightarrow 0$ and to the proof of Theorem \ref{asestpropmain}. Finally in Section 6 we prove Theorem \ref{greentea}.

\section{Preliminary results on radial sign-changing  solutions}
In this section we recall or prove some results about the existence and qualitative properties of radial sign-changing solutions of the Brezis-Nirenberg problem (\ref{PBN}).

We start with the following:

\begin{prop}\label{antani}
Let $n\geq 7$,  $k \in \N^+$ and $\lambda \in (0,\lambda_1)$, where $\lambda_1$ is the first eigenvalue of $-\Delta$ in $H_0^1(B_1)$. Then there exists  a radial sign-changing solution $u_{k,\lambda}$  of (\ref{PBN}) with the following properties:
\begin{description}
\item[(i)] $u_{k,\lambda}(0) >0$,
\item[(ii)] $u_{k,\lambda}$ has exactly k nodal regions in $B_1$,
\item[(iii)] $I_\lambda(u_{k,\lambda})= \frac{1}{2} \left( \int_{B_1}|\nabla u_{k,\lambda}|^2 - \lambda |u_{k,\lambda}|^2 \ dx\right)-\frac{1}{2^*} \int_{B_1} |u_{k,\lambda}|^{2^*} \ dx \rightarrow \frac{k}{n} S^{n/2}$
as $\lambda \rightarrow 0$, where $S$ is the best constant for the Sobolev embedding $H_0^1(B_1) \hookrightarrow L^{2^*}(B_1)$.
\end{description}
\end{prop}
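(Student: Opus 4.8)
The plan is to produce the solutions $u_{k,\lambda}$ by a minimization procedure over radial functions with a prescribed number of sign changes, combined with a careful energy bound. Work in the space $H_{0,rad}^1(B_1)$ of radial $H_0^1$ functions. For a fixed partition $0=\rho_0<\rho_1<\dots<\rho_k=1$ of the radial variable, consider functions that are ``glued'' from solutions of the Brezis--Nirenberg problem on each of the concentric regions $\{\rho_{i-1}<|x|<\rho_i\}$ with zero boundary data on the interfaces, and optimize over the interface radii; alternatively (and more robustly) define a minimax/Nehari-type value
\[
c_{k,\lambda}:=\inf_{\substack{u\in H^1_{0,rad}(B_1)\\ u\ \text{has}\ \geq k\ \text{nodal zones}}}\ \sup_{t>0}\ I_\lambda(tu),
\]
restricted appropriately so that a minimizer genuinely has $k$ nodal zones with $u(0)>0$. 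First I would show $c_{k,\lambda}<\frac{k}{n}S^{n/2}$ for every $\lambda\in(0,\lambda_1)$ and $n\geq 7$: this is the essential strict-inequality step that defeats the lack of compactness, and it is exactly here that the dimension restriction $n\geq 7$ and the result of Cerami--Solimini--Struwe \cite{1} enter. The construction uses $k$ truncated, suitably rescaled and translated (in the radial variable) standard bubbles $\delta_{0,\mu_i}$ with very different concentration scales $\mu_1\gg\mu_2\gg\dots\gg\mu_k$ placed in nested regions, forming a ``bubble tower''; the interaction terms between consecutive bubbles, together with the gain coming from the linear term $\lambda\int u^2$ (which is of strictly lower order than the loss from truncation precisely when $n\geq 7$), yield the strict inequality.

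Next I would establish a compactness/Palais--Smale-type statement below the threshold: any minimizing sequence (or PS sequence) for $I_\lambda$ at level $c_{k,\lambda}<\frac{k}{n}S^{n/2}$, made of radial functions with at least $k$ nodal zones, converges strongly in $H_{0,rad}^1(B_1)$ to a nontrivial critical point, up to subsequence. This is a by-now standard concentration-compactness argument: a radial PS sequence can only lose compactness by bubbling at the origin (radial symmetry forbids bubbling at interior points $x_0\neq0$ and, since $\lambda>0$, the result of Brezis--Nirenberg rules out nontrivial contributions escaping to the boundary at sublevel energies for the positive part). If $j$ bubbles formed, the energy would be at least $\frac jn S^{n/2}$ plus the energy of the weak limit, and a careful bookkeeping of nodal zones versus number of bubbles, combined with the strict bound $c_{k,\lambda}<\frac kn S^{n/2}$, forces no bubbling and strong convergence. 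The limit $u_{k,\lambda}$ is then a radial solution; elliptic regularity (Brezis--Kato) makes it classical, and a continuity/topological argument on the nodal structure (the number of sign changes cannot drop in the limit because of the strong $H^1$, hence $C^0$ on compact subsets of $(0,1]$, convergence, and cannot increase because the energy would exceed the threshold) gives exactly $k$ nodal zones, with $u_{k,\lambda}(0)>0$ after possibly replacing $u$ by $-u$ — this yields (i) and (ii).

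For part (iii) I would argue by a two-sided estimate on $c_{k,\lambda}=I_\lambda(u_{k,\lambda})$. The upper bound $c_{k,\lambda}\leq \frac kn S^{n/2}+o(1)$ as $\lambda\to0$ follows from refining the bubble-tower test function above: as $\lambda\to0$ the truncation and interaction errors can be made $o(1)$ while the $\lambda$-term contributes $o(1)$, so $\limsup_{\lambda\to0}c_{k,\lambda}\leq\frac kn S^{n/2}$. For the matching lower bound, I would run the blow-up analysis on the family $(u_{k,\lambda})$ itself: since $u_{k,\lambda}$ has $k$ nodal zones and $I_\lambda(u_{k,\lambda})<\frac kn S^{n/2}$ is bounded, standard arguments show $u_{k,\lambda}\rightharpoonup 0$ and that each nodal component carries energy at least $\frac1n S^{n/2}-o(1)$ (by the Sobolev inequality applied on each zone, using that $\lambda\to0$), hence $\liminf_{\lambda\to0}I_\lambda(u_{k,\lambda})\geq\frac kn S^{n/2}$; combining gives the limit $\frac kn S^{n/2}$. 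The main obstacle I anticipate is the strict inequality $c_{k,\lambda}<\frac kn S^{n/2}$ together with the control that the minimizer does not degenerate (i.e. that nodal zones do not collapse and the count is exactly $k$ rather than fewer): this is the delicate point where $n\geq 7$ is indispensable, since for $3\leq n\leq 6$ the sign of the relevant expansion reverses and, as recalled in the introduction, no radial sign-changing solution exists for small $\lambda$. For $k=1$ one recovers the positive-solution case, and the whole scheme is a radial, nodal adaptation of the Brezis--Nirenberg and Cerami--Solimini--Struwe arguments.
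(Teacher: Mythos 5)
Your argument for part (iii) is essentially sound but follows a genuinely different route from the paper. The paper does not reprove existence: (i) and (ii) are quoted directly from Cerami--Solimini--Struwe \cite{1}, together with the variational characterization $c_k(\lambda)=\inf_{\mathcal{M}_{k,\lambda}}I_\lambda$ (zone-wise Nehari constraints on each nodal region) and the inequality $c_{k+1}(\lambda)<c_k(\lambda)+\frac{1}{n}S^{n/2}$; part (iii) is then obtained by induction on $k$, the lower bound coming from $c_{k+1}(\lambda)\geq (k+1)\,c_1(\lambda)$ (each nodal component, extended by zero, lies in $\mathcal{M}_{1,\lambda}$) and the base case from $c_1(\lambda)\to\frac{1}{n}S^{n/2}$. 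Your two-sided estimate replaces this scheme: the lower bound you sketch --- testing the equation on each nodal zone and combining Sobolev with Poincar\'e, which gives $I_\lambda(u_{k,\lambda})\geq \frac{k}{n}\left(1-\lambda/\lambda_1\right)^{n/2}S^{n/2}$ --- is correct, self-contained, and in fact bypasses the induction and the base-case continuity argument; your upper bound via a bubble-tower test function is viable but heavier than necessary, since the inequality of \cite{1} that you already invoke yields $c_k(\lambda)<\frac{k}{n}S^{n/2}$ for every $\lambda$, with no asymptotic expansion needed.

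The one genuine weak point is the variational setup you propose for existence. The single-parameter value $\inf\sup_{t>0}I_\lambda(tu)$ over radial functions with at least $k$ nodal zones degenerates: a minimizing sequence can let all but one zone shrink in norm, so the level collapses towards $c_1(\lambda)$ and a minimizer would not have $k$ nodal zones. The device that makes the minimization work is precisely the zone-wise Nehari constraint (the set $\mathcal{M}_{k,\lambda}$ of \cite{1}, equivalently a $k$-parameter fibering in which each nodal component is scaled separately), and your phrase ``restricted appropriately'' hides exactly this ingredient; relatedly, your upper bound in (iii) only controls $I_\lambda(u_{k,\lambda})$ if the energy of the solution is characterized as an infimum over such a class, which must be supplied. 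Since (i)--(ii), the characterization $I_\lambda(u_{k,\lambda})=c_k(\lambda)$, and the strict inequality are exactly what \cite{1} provides (and are simply cited in the paper), the cleanest repair is to quote them and retain your per-zone lower bound; the rest of your compactness discussion (radial Palais--Smale sequences can only bubble at the origin, nodal zones cannot vanish because each carries a fixed quantum of energy) is consistent with this but is not needed once \cite{1} is invoked.
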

\begin{proof}
The existence of radial solutions of (\ref{PBN}) satisfying (i) and (ii) is proved in \cite{1}. 
It remains only to prove (iii). To do this we need to introduce some notations and recall some facts proved in \cite{1} and \cite{2}. Let $k \in \N^+$ and $0=r_0 <r_1 < \ldots < r_k = 1$ any partition of the interval $[0,1]$, we define the sets $\Omega_1:=B_{r_1}=\{x \in B_1; |x|<r_1\}$ and, if $k\geq 2$, $\Omega_j:=\{x \in B_1; r_{j-1}<|x|<r_{j}\}$ for $j=2,\ldots,k$. 

Then we consider the set
\begin{eqnarray*}
\mathcal{M}_{k,\lambda}:=&\displaystyle \left\{u \in H_{0,rad}^1(B_1); \ \hbox{there exists a partition} \ 0=r_0 <r_1 < \ldots < r_k = 1\right. \\ & \ \ \ \ \ \ \hbox{such that:} \ u(r_j)=0, \hbox{for}\  1\leq j \leq k,\  (-1)^{j-1}u(x) \geq 0 , u \not\equiv 0 \ \hbox{in} \ \Omega_j, \ \hbox{and} \\
&\left. \displaystyle \int_{\Omega_j} \left(|\nabla u_j|^2 - u_j^2 - |u_j|^{2^*} \right)\ dx=0,\ \hbox{for}\  1\leq j \leq k \right\}, 
\end{eqnarray*}
where $H_{0,rad}^1(B_1)$ is the subspace of the radial functions in $H_0^1(B_1)$ and $u_j$ is the function defined by $u_j:= u\ \chi_{_{\Omega_j}}$, where $\chi_{_{\Omega_j}}$ denotes the characteristic function of $\Omega_j$. Note that for any $k \in \N^+$ we have $\mathcal{M}_{k,\lambda}\neq \emptyset$, so we define
$$c_k(\lambda):=\inf_{\mathcal{M}_{k,\lambda}} I_\lambda (u).$$

 In \cite{1} the authors prove, by induction on $k$, that for every $k \in \N^+$ there exists $u_{k,\lambda} \in \mathcal{M}_{k,\lambda} $ such that $I_\lambda(u_{k,\lambda})=c_k(\lambda)$ and $u_{k,\lambda}$ solves (\ref{PBN}) in $B_1$.  Moreover they prove that 
\begin{equation}\label{sbirigunda}
c_{k+1}(\lambda) < c_k(\lambda) + \frac{1}{n} S^{n/2}.
\end{equation}

Note that for $k=1$ $u_{1,\lambda}$ is just the positive solution found in \cite{2}, since, by the Gidas, Ni and Nirenberg symmetry result \cite{5} every positive solution is radial and from  \cite{6} or \cite{9} we know that positive solutions of (\ref{PBN}) are unique.

To prove (iii) we argue by indution. Since $c_1(0)=\frac{1}{n}S^{n/2}$, by continuity we get that $c_1(\lambda)\rightarrow \frac{1}{n}S^{n/2}$, as $\lambda \rightarrow 0$, so that (iii) holds for $k=1$.

Now assume that  $c_k(\lambda) \rightarrow  \frac{k}{n} S^{n/2}$, and let us to prove that
$c_{k+1}(\lambda)=I_\lambda(u_{k+1,\lambda}) \rightarrow  \frac{k+1}{n} S^{n/2}$. 

Let us observe that $c_{k+1}(\lambda) \geq (k+1) c_1(\lambda)$. In fact  $w:=u_{k+1,\lambda}$ achieves the minimum for $I_\lambda$ over ${\mathcal{M}_{k+1,\lambda}}$, so that, by definition, it has $k+1$ nodal regions and $w_j:= w \chi_{_{\Omega_j}}$ belongs to $H_{0,rad}^1(B_1)$  for all $j=1,\ldots,k+1$. Since $w \in \mathcal{M}_{k+1,\lambda} $ we have, depending on the parity of $j$, that one between $ w_j^+$ and $w_j^-$ is not zero and belongs to $ \mathcal{M}_{1,\lambda}$, we denote it by $\tilde w_j$.
Then $I_\lambda (\tilde w_j) \geq c_1(\lambda)$ for all $j=1,\ldots,k+1$ and hence 
$$c_{k+1}(\lambda)=I_\lambda(w)=\sum_{j=1}^{k+1}I_\lambda (w_j^{\pm}) \geq (k+1) c_1(\lambda).$$

Combining this with (\ref{sbirigunda}) we get 
$$c_k(\lambda) + \frac{1}{n} S^{n/2} > c_{k+1}(\lambda)\geq (k+1) c_1(\lambda).$$
Since by induction hypothesis $c_k(\lambda) \rightarrow  \frac{k}{n} S^{n/2}$ as $\lambda \rightarrow 0$ and  we have proved that $c_1(\lambda) \rightarrow \frac{1}{n} S^{n/2}$ we get that $c_{k+1}(\lambda)\rightarrow \frac{k+1}{n} S^{n/2}$, and the proof is concluded.
\end{proof}

\begin{rem}\label{rmk1}
Let  $k \in \N^+$ and $(u_\lambda)$ be a family of solutions of (\ref{PBN}), satisfying (iii) of Proposition \ref{antani}, then $\|u_\lambda\|_{B_1}^2=\int_{B_1}|\nabla u_\lambda|^2\ dx \rightarrow kS^{n/2}$, as $\lambda \rightarrow 0$. 
\end{rem}
This comes easily from Proposition \ref{antani}, and the fact that $u_\lambda$ belongs to the Nehari manifold $\mathcal{N}_\lambda$ associated to (\ref{PBN}), which is defined by $$\mathcal{N}_\lambda:=\{u \in H_0^1(B_1); \ \|u\|_{B_1}^2 - \lambda |u|_{2,B_1}^2=|u|_{2^*,B_1}^{2^*}\}.$$

The first qualitative property we state about any radial sign-changing solution $u_\lambda$ of (\ref{PBN}) is that the global maximum point of $|u_\lambda|$ is located at the origin; which is a well-known fact for positive solutions of (\ref{PBN}), as consequence of \cite{5}. 

\begin{prop} \label{TROLL}
Let $u_\lambda$ be a radial solution of (\ref{PBN}), then we have $|u_\lambda (0)|=\|u_\lambda\|_{\infty}$.
\end{prop}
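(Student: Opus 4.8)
The plan is to exploit the radial symmetry together with the structure of the equation to show that $|u_\lambda|$ has no interior maximum away from the origin. Writing $u_\lambda = u_\lambda(r)$ with $r = |x|$, the equation becomes the ODE
\begin{equation*}
u_\lambda'' + \frac{n-1}{r} u_\lambda' + \lambda u_\lambda + |u_\lambda|^{2^*-2}u_\lambda = 0, \qquad r \in (0,1),
\end{equation*}
with $u_\lambda'(0) = 0$ and $u_\lambda(1) = 0$. First I would argue by contradiction: suppose the global maximum of $|u_\lambda|$ is attained at some $r_0 \in (0,1]$ with $|u_\lambda(r_0)| > |u_\lambda(0)|$, and consider (after possibly replacing $u_\lambda$ by $-u_\lambda$, which solves the same equation) the case $u_\lambda(r_0) = \|u_\lambda\|_\infty =: M > 0$. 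Since $r_0$ is an interior maximum of the smooth function $u_\lambda$ on $(0,1)$ (if $r_0 = 1$ then $u_\lambda(1) = 0 < M$, contradiction, so $r_0 < 1$), we have $u_\lambda'(r_0) = 0$ and $u_\lambda''(r_0) \le 0$. Evaluating the ODE at $r_0$ gives $u_\lambda''(r_0) = -\lambda M - M^{2^*-1} < 0$, which is consistent and by itself yields no contradiction — so a more global argument is needed.

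The key step is an energy/monotonicity argument for the associated ODE. Consider the function
\begin{equation*}
E(r) := \frac{1}{2} u_\lambda'(r)^2 + \frac{\lambda}{2} u_\lambda(r)^2 + \frac{1}{2^*} |u_\lambda(r)|^{2^*}.
\end{equation*}
Differentiating and using the ODE, $E'(r) = u_\lambda'\big(u_\lambda'' + \lambda u_\lambda + |u_\lambda|^{2^*-2}u_\lambda\big) = -\frac{n-1}{r}u_\lambda'(r)^2 \le 0$, so $E$ is nonincreasing on $(0,1)$. Hence for every $r \in (0,1)$,
\begin{equation*}
\frac{\lambda}{2}u_\lambda(r)^2 + \frac{1}{2^*}|u_\lambda(r)|^{2^*} \le E(r) \le E(0^+) = \frac{\lambda}{2}u_\lambda(0)^2 + \frac{1}{2^*}|u_\lambda(0)|^{2^*},
\end{equation*}
using $u_\lambda'(0)=0$ and continuity of $u_\lambda, u_\lambda'$ up to $0$ (recall weak solutions are classical). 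Since the map $t \mapsto \frac{\lambda}{2}t^2 + \frac{1}{2^*}|t|^{2^*}$ is strictly increasing in $|t|$, this forces $|u_\lambda(r)| \le |u_\lambda(0)|$ for all $r$, i.e. $\|u_\lambda\|_\infty = |u_\lambda(0)|$.

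The main obstacle I anticipate is the regularity/behavior at the origin: one needs that $u_\lambda \in C^1$ up to $r=0$ with $u_\lambda'(0)=0$ and that $r^{n-1}u_\lambda'(r)^2 \to 0$ (equivalently that $E(r)$ has a finite limit as $r \to 0^+$), so that the comparison $E(r) \le E(0^+)$ with $E(0^+) = \frac{\lambda}{2}u_\lambda(0)^2 + \frac{1}{2^*}|u_\lambda(0)|^{2^*}$ is legitimate. This follows from standard elliptic regularity for the critical problem (weak solutions are classical by the Brezis--Kato lemma already cited in the excerpt) together with the radial ODE analysis near $0$: integrating $(r^{n-1}u_\lambda')' = -r^{n-1}(\lambda u_\lambda + |u_\lambda|^{2^*-2}u_\lambda)$ from $0$ shows $u_\lambda'(r) = O(r)$ near the origin. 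An alternative, cleaner route that sidesteps even this is a direct maximum-principle argument: if $|u_\lambda|$ had an interior maximum at $r_0 > 0$ with value $M$, then on a neighborhood where $u_\lambda$ has constant sign one compares $u_\lambda$ with the solution of the same ODE issuing from $(r_0, M, 0)$ and uses the uniqueness of the Cauchy problem together with the sign of the nonlinear term to propagate a contradiction back toward $r=0$; but the energy argument above is the most economical and is the one I would write up.
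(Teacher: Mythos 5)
Your argument is correct and is essentially the paper's own proof: both rest on the monotonicity of the energy $E(r)=\frac{1}{2}(u_\lambda')^2+\frac{\lambda}{2}u_\lambda^2+\frac{1}{2^*}|u_\lambda|^{2^*}$ along the radial ODE together with the strict monotonicity of $t\mapsto \frac{\lambda}{2}t^2+\frac{1}{2^*}t^{2^*}$. Your write-up even streamlines it slightly (a direct pointwise bound $|u_\lambda(r)|\le |u_\lambda(0)|$ instead of the paper's comparison of $E$ at the maximum point with $E(0)$), and your care about $u_\lambda'(0)=0$ and the finiteness of $E(0^+)$ is consistent with the boundary condition the paper imposes in the radial formulation.
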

\begin{proof}
Since $u_\lambda=u_\lambda(r)$ is a radial solution of (\ref{PBN}), then it solves
\begin{equation}\label{RODE}
\begin{cases}
u_\lambda^{\prime \prime} +\frac{n-1}{r}u_\lambda^{\prime} + \lambda u_\lambda + |u_\lambda |^{2^* -2}u_\lambda =0  & \hbox{in}\ \ (0,1)\\
u_\lambda^\prime( 0)=0, \ \ u_\lambda(1)=0.&
\end{cases}
\end{equation}
Multiplying the equation by $u_\lambda^\prime$ we get
$$u_\lambda^{\prime \prime} u_\lambda^\prime + \lambda u_\lambda u_\lambda^\prime+ |u_\lambda |^{2^* -2}u_\lambda u_\lambda^\prime= - \frac{n-1}{r}(u_\lambda^{\prime})^2 \leq 0. $$
We rewrite this as
$$\frac{d}{dr}\left[\frac{ (u_\lambda^\prime)^2 }{2}+ \lambda \frac{u_\lambda^2} {2}+ \frac{|u_\lambda |^{2^*}}{2^*}\right] \leq 0. $$
Which implies that the function
 $$E(r):= \frac{ (u_\lambda^\prime)^2 }{2}+ \lambda \frac{u_\lambda^2} {2}+ \frac{|u_\lambda |^{2^*}}{2^*}$$ is not increasing. So $E(0) \geq E(r)$ for all $r \in (0,1)$, where $E(0)=\lambda \frac{(u_\lambda(0))^2} {2}+ \frac{|u_\lambda(0) |^{2^*}}{2^*}$. Assume that $r_0 \in (0,1)$ is the global maximum for $|u_\lambda|$, so we have $u_\lambda^\prime(r_0)=0$, $|u_\lambda(r_0)|=\|u_\lambda\|_{\infty}$ and $E(r_0)=\lambda \frac{\|u_\lambda\|_{\infty}^2} {2}+ \frac{\|u_\lambda \|_{\infty}^{2^*}}{2^*}$. 

 Now we observe that, for all $\lambda >0$, the function $g(x):=\frac{\lambda}{2}x^2 + \frac{1}{2^*}x^{2^*}$, defined in $\R^+ \cup \{0\}$, is strictly increasing; thus we have $E(r_0) \geq E(0)$ and hence $E(r_0)=E(0)$. Since $g$ is strictly increasing we get $|u_\lambda(0)|=|u_\lambda(r_0)|=\|u_\lambda\|_{\infty}$ and we are done. 
\end{proof}

A consequence of the previous proposition is the following:

\begin{cor}\label{remutile}
Assume $u_\lambda$ is a nontrivial radial solution of (\ref{PBN}). If $0\leq r_1\leq r_2<1$ are two points in the same nodal region  such that $|u_\lambda(r_1)|\leq |u_\lambda(r_2)|$, $u_\lambda^\prime(r_1)=u_\lambda^\prime(r_2)=0$, then necessarily $r_1=r_2$. 
\end{cor}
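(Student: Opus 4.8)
The plan is to reuse the monotone energy function introduced in the proof of Proposition \ref{TROLL}. Recall that for a radial solution $u_\lambda$ of (\ref{RODE}) the function
$$E(r):=\frac{(u_\lambda^\prime(r))^2}{2}+\lambda\frac{u_\lambda(r)^2}{2}+\frac{|u_\lambda(r)|^{2^*}}{2^*}$$
is well defined and $C^1$ on $[0,1)$, and satisfies $E^\prime(r)=-\frac{n-1}{r}(u_\lambda^\prime(r))^2\leq 0$ for $r\in(0,1)$, so $E$ is non-increasing on $[0,1)$. I would also keep in mind the function $g(x):=\frac{\lambda}{2}x^2+\frac{1}{2^*}x^{2^*}$, which is strictly increasing on $[0,\infty)$ since $\lambda>0$; because $u_\lambda^\prime$ vanishes at $r_1$ and at $r_2$, one has $E(r_i)=g(|u_\lambda(r_i)|)$ for $i=1,2$.

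First I would show that in fact $|u_\lambda(r_1)|=|u_\lambda(r_2)|$. Indeed, since $r_1\leq r_2$ and $E$ is non-increasing, $E(r_1)\geq E(r_2)$, i.e. $g(|u_\lambda(r_1)|)\geq g(|u_\lambda(r_2)|)$; as $g$ is strictly increasing this gives $|u_\lambda(r_1)|\geq |u_\lambda(r_2)|$, and combining with the hypothesis $|u_\lambda(r_1)|\leq |u_\lambda(r_2)|$ forces equality, hence also $E(r_1)=E(r_2)$.

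Next I would argue by contradiction, assuming $r_1<r_2$. Since $E$ is non-increasing and $E(r_1)=E(r_2)$, it must be constant on $[r_1,r_2]$, so $E^\prime\equiv 0$ on $(r_1,r_2)$ (working on $(\varepsilon,r_2)$ for small $\varepsilon>0$ if $r_1=0$, to avoid the origin). In view of $E^\prime(r)=-\frac{n-1}{r}(u_\lambda^\prime(r))^2$ this forces $u_\lambda^\prime\equiv 0$ on that interval, hence on all of $[r_1,r_2]$ by continuity, so $u_\lambda\equiv c$ there for some constant $c$. Plugging a constant into the radial equation (\ref{RODE}) yields $\lambda c+|c|^{2^*-2}c=0$, i.e. $c(\lambda+|c|^{2^*-2})=0$, and since $\lambda>0$ this gives $c=0$. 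But $r_1$ and $r_2$ lie in a nodal region, where $u_\lambda$ does not vanish, a contradiction. Therefore $r_1=r_2$.

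The argument is essentially immediate once the monotone energy $E$ is in hand; no genuine obstacle is expected. The only points needing a little care are the behaviour at $r=0$ (handled by differentiating $E$ away from the origin) and the identification of a nodal region as a connected set on which $u_\lambda\neq 0$, which is exactly what rules out $c=0$ in the last step.
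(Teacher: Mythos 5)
Your proof is correct and follows essentially the same approach as the paper: the monotone energy $E(r)$ from Proposition \ref{TROLL} together with the strict monotonicity of $g$ to force $E(r_1)=E(r_2)$ and $|u_\lambda(r_1)|=|u_\lambda(r_2)|$. Your closing step is in fact slightly cleaner than the paper's: you deduce $u_\lambda'\equiv 0$ directly from $E'\equiv 0$ on $(r_1,r_2)$ and then get $c=0$ by plugging the constant into (\ref{RODE}), whereas the paper reaches constancy via Rolle's theorem and then invokes sign-invariance and the strong maximum principle to contradict nontriviality; both arguments are valid.
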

\begin{proof}
Assume by contradiction $r_1<r_2$. By the assumptions and since the function $g(x):=\frac{\lambda}{2}x^2 + \frac{1}{2^*}x^{2^*}$ is a strictly increasing function (in $\R^+ \cup \{0\}$), we have $E(r_1)=g(|u_\lambda(r_1)|)\leq g(|u_\lambda(r_2)|)=E(r_2)$. But, as proved in Proposition \ref{TROLL}, $E(r)$ is a decreasing function, so necessarily $E(r_1)=g(|u_\lambda(r_1)|)= g(|u_\lambda(r_2)|)=E(r_2)$ from which we get $|u_\lambda(r_1)|=|u_\lambda(r_2)|$.  Since $r_1,r_2$ are in the same nodal region from $|u_\lambda(r_1)|=|u_\lambda(r_2)|$ we have $u_\lambda(r_1)=u_\lambda(r_2)$, thus there exists $r_* \in (r_1,r_2)$ such that $u_\lambda^\prime(r_*)=0$, and, since $E(r)$ is a decreasing function, we have $E(r_1)\geq E(r_*) \geq E(r_2)$. 
From this we deduce $  g(|u_\lambda(r_1)|)\geq g(|u_\lambda(r_*)|)\geq g(|u_\lambda(r_2)|)$, and hence $u_\lambda(r_1) = u_\lambda(r_*) = u_\lambda(r_2)$. 
Therefore $u_\lambda$ must be constant in the interval $[r_1,r_2]$ and, being a solution of (\ref{PBN}), it must be zero in that interval. In fact, since (\ref{PBN}) is invariant under a change of sign, we can assume that $u_\lambda \equiv c >0$. Then, by the strong maximum principle, $u_\lambda$ must be zero in the nodal region to which $r_1$, $r_2$ belong. This, in turn, implies that $u_\lambda$ is a trivial solution of (\ref{PBN}) which is a contradiction.
\end{proof}

\section{Asymptotic results for solutions with 2 nodal regions}
\subsection{General results}
Let $(u_\lambda)$ be a family of least energy radial, sign-changing solutions of (\ref{PBN}) and such that $u_\lambda(0)>0$.  

We denote by $r_\lambda \in (0,1)$ the node; so we have $u_\lambda >0$ in the ball $B_{r_\lambda}$ and $u_\lambda<0$ in the annulus $A_{r_\lambda}:=\{x \in \R^n ; r_\lambda <|x| < 1\}$. We write $u_\lambda^\pm$ to indicate that the statements  hold both for the positive and negative part of $u_\lambda$.

\begin{prop}\label{PROP1}
We have:
\begin{description}
\item[(i)] $\|u_\lambda^\pm\|_{B_1}^2  = \int_{B_1}|\nabla u_\lambda^\pm|^2\  dx \rightarrow S^{n/2}$, as $\lambda \rightarrow 0$,
\item[(ii)] $|u_\lambda^\pm|_{2^*,B_1}^{2^*}  = \int_{B_1}| u_\lambda^\pm|^{\frac{2n}{n-2}}\  dx \rightarrow S^{n/2}$, as $\lambda \rightarrow 0$,
\item[(iii)] $u_\lambda \rightharpoonup 0$, as $\lambda \rightarrow 0$,
\item[(iv)] $\displaystyle M_{\lambda,+}:=\max_{B_1}u_\lambda^+ \rightarrow + \infty$, $\displaystyle M_{\lambda,-}:=\max_{B_1}u_\lambda^- \rightarrow + \infty$, as $\lambda \rightarrow 0.$
\end{description}
\end{prop}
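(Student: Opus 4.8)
\noindent\emph{Proof strategy.} The plan is to read off (i) and (ii) from the energy estimates already contained in Proposition \ref{antani} by a squeezing argument, and then to obtain (iii) and (iv) as soft consequences.

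Since $u_\lambda$ has exactly two nodal regions, $u_\lambda=u_\lambda^+-u_\lambda^-$ with the gradients of $u_\lambda^+$ and $u_\lambda^-$ supported in the disjoint sets $B_{r_\lambda}$ and $A_{r_\lambda}$; hence $\|u_\lambda\|_{B_1}^2=\|u_\lambda^+\|_{B_1}^2+\|u_\lambda^-\|_{B_1}^2$ and $I_\lambda(u_\lambda)=I_\lambda(u_\lambda^+)+I_\lambda(u_\lambda^-)$, the parts being extended by zero. As $u_\lambda^+$ (resp.\ $u_\lambda^-$) solves (\ref{PBN}) in $B_{r_\lambda}$ (resp.\ in $A_{r_\lambda}$) with vanishing boundary datum, testing the equation against the corresponding part gives the Nehari identities
\[
\|u_\lambda^\pm\|_{B_1}^2=\lambda\,|u_\lambda^\pm|_{2,B_1}^2+|u_\lambda^\pm|_{2^*,B_1}^{2^*},\qquad\text{so that}\qquad I_\lambda(u_\lambda^\pm)=\tfrac1n\,|u_\lambda^\pm|_{2^*,B_1}^{2^*}.
\]
These also show that $u_\lambda^\pm$ are radial, nonnegative, nontrivial and lie on the Nehari manifold of (\ref{PBN}), i.e.\ $u_\lambda^\pm\in\mathcal M_{1,\lambda}$, whence $I_\lambda(u_\lambda^\pm)\ge c_1(\lambda)$ and therefore $I_\lambda(u_\lambda)\ge 2c_1(\lambda)$; on the other hand, $u_\lambda$ being a least energy sign-changing solution gives $I_\lambda(u_\lambda)\le c_2(\lambda)$.

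By Proposition \ref{antani} and its proof, $c_1(\lambda)\to\tfrac1n S^{n/2}$ and $c_2(\lambda)\to\tfrac2n S^{n/2}$ as $\lambda\to0$, so $2c_1(\lambda)\le I_\lambda(u_\lambda)\le c_2(\lambda)$ forces $I_\lambda(u_\lambda)\to\tfrac2n S^{n/2}$. Since moreover $\liminf_{\lambda\to0} I_\lambda(u_\lambda^\pm)\ge\tfrac1n S^{n/2}$ while $I_\lambda(u_\lambda^+)+I_\lambda(u_\lambda^-)\to\tfrac2n S^{n/2}$, a one-line $\limsup$ argument yields $I_\lambda(u_\lambda^\pm)\to\tfrac1n S^{n/2}$, which by the Nehari identity is exactly (ii). For (i): by Remark \ref{rmk1} (or directly from the Nehari identity and the Poincar\'e inequality) $\|u_\lambda^\pm\|_{B_1}^2$ stays bounded, so $|u_\lambda^\pm|_{2,B_1}^2\le\lambda_1^{-1}\|u_\lambda^\pm\|_{B_1}^2$ is bounded, hence $\lambda\,|u_\lambda^\pm|_{2,B_1}^2\to0$, and the Nehari identity gives $\|u_\lambda^\pm\|_{B_1}^2=|u_\lambda^\pm|_{2^*,B_1}^{2^*}+\lambda\,|u_\lambda^\pm|_{2,B_1}^2\to S^{n/2}$.

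For (iii) I would use weak compactness: along any sequence $\lambda\to0$, up to a subsequence $u_\lambda\rightharpoonup u_0$ in $H_0^1(B_1)$, with $u_\lambda\to u_0$ in $L^2(B_1)$ and a.e.; passing to the limit in the weak formulation of (\ref{PBN})---using $\lambda u_\lambda\to0$ in $L^2$ and that $|u_\lambda|^{2^*-2}u_\lambda$ is bounded in $L^{\frac{2n}{n+2}}(B_1)$ and converges a.e.\ to $|u_0|^{2^*-2}u_0$---shows that $u_0$ is a weak, hence (by Brezis--Kato) classical, solution of $-\Delta u_0=|u_0|^{2^*-2}u_0$ in $B_1$ with $u_0=0$ on $\partial B_1$; the Pohozaev identity on the star-shaped ball then forces $u_0\equiv0$, and as the limit is independent of the subsequence, $u_\lambda\rightharpoonup0$. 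Finally (iv) follows at once: by Rellich $u_\lambda\to0$ in $L^2(B_1)$, so $|u_\lambda^\pm|_{2,B_1}\to0$ (since $0\le u_\lambda^\pm\le|u_\lambda|$); if $M_{\lambda,+}$ stayed bounded along a sequence, then $|u_\lambda^+|_{2^*,B_1}^{2^*}\le M_{\lambda,+}^{2^*-2}\,|u_\lambda^+|_{2,B_1}^2\to0$ would contradict (ii), hence $M_{\lambda,+}\to+\infty$, and likewise $M_{\lambda,-}\to+\infty$. The only step that needs a substantive (if classical) tool is the passage to the limit in (iii) together with Pohozaev's identity to annihilate the weak limit---this is the only place where the criticality of the exponent, rather than merely the Sobolev inequality, is really used; everything else is bookkeeping around the energy splitting and the Nehari identities.
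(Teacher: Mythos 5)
Your argument is correct, but it is not the paper's route: in the paper Proposition \ref{PROP1} is not proved directly at all, it is obtained as a special case of Lemma 2.1 of \cite{AMP7}, a blow-up lemma valid for low-energy sign-changing solutions in arbitrary bounded domains. What you do instead is a self-contained proof inside the radial variational framework of Proposition \ref{antani}: the Nehari identities $\|u_\lambda^\pm\|_{B_1}^2=\lambda|u_\lambda^\pm|_{2,B_1}^2+|u_\lambda^\pm|_{2^*,B_1}^{2^*}$ obtained by testing with the nodal parts, the observation that each part (extended by zero) belongs to $\mathcal M_{1,\lambda}$ so that $I_\lambda(u_\lambda^\pm)\ge c_1(\lambda)$ (the same device used in the proof of Proposition \ref{antani} to get $c_{k+1}\ge(k+1)c_1$), the least-energy bound $I_\lambda(u_\lambda)\le c_2(\lambda)$ coming from the fact that the minimizer $u_{2,\lambda}$ is itself a radial sign-changing solution, and the squeeze $2c_1(\lambda)\le I_\lambda(u_\lambda)\le c_2(\lambda)$ with $c_1(\lambda)\to\frac1nS^{n/2}$, $c_2(\lambda)\to\frac2nS^{n/2}$; then (iii) by passing to the limit in the weak formulation (boundedness of $|u_\lambda|^{2^*-2}u_\lambda$ in $L^{2n/(n+2)}$ plus a.e.\ convergence) and killing the limit with Brezis--Kato regularity and Pohozaev's identity on the ball, and (iv) by the elementary interpolation $|u_\lambda^\pm|_{2^*}^{2^*}\le M_{\lambda,\pm}^{2^*-2}|u_\lambda^\pm|_{2}^{2}$ against $u_\lambda\to0$ in $L^2$. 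All steps check out (in particular the sign bookkeeping in the Nehari identity for $u_\lambda^-$ and the whole-family weak convergence via the subsequence argument are fine). The trade-off: the paper's citation is shorter and rests on a result that covers nonradial solutions and general domains, while your proof keeps the paper self-contained, uses only what is already established in Section 2 plus classical tools (Struwe's treatment of Pohozaev nonexistence suffices for the star-shaped ball), and makes transparent exactly where the least-energy and two-nodal-region structure enter.
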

\begin{proof}
This proposition is a special case of Lemma 2.1 in \cite{AMP7}.
\end{proof}

Let's recall a classical result, due to Strauss, known as "radial lemma":
\begin{lem}[Strauss]\label{strauss}
 There exists a constant $c >0$, depending only on $n$, such that for all $u \in H_{rad}^1(\R^n)$
\begin{equation} \label{eqlevistrauss}
|u(x)| \leq c\ \frac{\|u\|_{1,2}^{1/2}}{|x|^{(n-1)/2}} \ \ \hbox{a.e. on}\  \R^n,
\end{equation}
where $\|\cdot\|_{1,2}$ is the standard $H^1$-norm.
\end{lem}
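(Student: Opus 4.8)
The plan is to establish the pointwise decay estimate \eqref{eqlevistrauss} by a density argument combined with an explicit one-dimensional computation. First I would reduce to the case $u \in C_c^\infty(\R^n)$ radial, since such functions are dense in $H^1_{rad}(\R^n)$ and the estimate is stable under the appropriate limit (once proved for smooth functions, both sides pass to the limit: the right-hand side is continuous in the $H^1$-norm, and on the left one extracts a subsequence converging a.e.). For a radial function write $u = u(r)$ with $r = |x|$; the key identity is
\begin{equation*}
\frac{d}{dr}\bigl(r^{n-1} u(r)^2\bigr) = (n-1) r^{n-2} u(r)^2 + 2 r^{n-1} u(r) u'(r).
\end{equation*}
Since $u$ has compact support, integrating this from $r$ to $\infty$ gives
\begin{equation*}
r^{n-1} u(r)^2 = -\int_r^\infty \Bigl[(n-1)s^{n-2} u(s)^2 + 2 s^{n-1} u(s) u'(s)\Bigr]\, ds .
\end{equation*}

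The main step is then to bound the right-hand side. Dropping the first (nonnegative, but with a sign that must be handled — actually one keeps it and estimates its absolute value) term and applying Cauchy--Schwarz to the second, one gets, for $n \geq 2$ and $s \geq r$, the crude bound $s^{n-2} \leq s^{n-1}/r$ on the support region, hence
\begin{equation*}
r^{n-1} u(r)^2 \leq \frac{n-1}{r}\int_r^\infty s^{n-1} u(s)^2\, ds + 2 \left(\int_r^\infty s^{n-1} u(s)^2\, ds\right)^{1/2}\left(\int_r^\infty s^{n-1} u'(s)^2\, ds\right)^{1/2}.
\end{equation*}
Recognizing (up to the factor $\omega_n$) the integrals $\int_r^\infty s^{n-1} u(s)^2\, ds \leq \omega_n^{-1}\|u\|_{L^2(\R^n)}^2$ and $\int_r^\infty s^{n-1} u'(s)^2\, ds \leq \omega_n^{-1}\|\nabla u\|_{L^2(\R^n)}^2$, one obtains $r^{n-1} u(r)^2 \leq C(n)\,\|u\|_{1,2}^2$ after absorbing the $1/r$ term; a cleaner route that avoids the $1/r$ nuisance is to note $s^{n-2} \le s^{n-1}$ is false for $s<1$, so instead one uses the standard trick of treating $v(s) := s^{(n-1)/2} u(s)$ and differentiating $v^2$ directly, or simply restricts attention to $|x|$ bounded below where the inequality is of interest and uses monotonicity of $r^{n-1}u(r)^2$ from a point where it is maximal. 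Taking square roots yields $|u(r)| \leq c\, \|u\|_{1,2}\, r^{-(n-1)/2}$, and tracking constants shows $c$ depends only on $n$.

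The only genuine obstacle is bookkeeping the weight $s^{n-2}$ versus $s^{n-1}$: for $n \ge 3$ one cannot uniformly bound $s^{n-2}$ by a constant times $s^{n-1}$ near $s = 0$, so the naive estimate of the $(n-1)s^{n-2}u^2$ term fails there. The clean fix is the substitution $w(s) = s^{(n-1)/2}u(s)$: then $w'(s) = \tfrac{n-1}{2}s^{(n-3)/2}u(s) + s^{(n-1)/2}u'(s)$, and
\begin{equation*}
w(r)^2 = -\int_r^\infty 2 w(s) w'(s)\, ds = -\int_r^\infty \Bigl[(n-1)s^{n-2}u(s)^2 + 2 s^{n-1} u(s) u'(s)\Bigr] ds,
\end{equation*}
which is exactly the identity above, and now one bounds $|w(r)|^2$ directly by $2\|w\|_{L^2(r,\infty)}\|w'\|_{L^2(r,\infty)}$ via Cauchy--Schwarz — except $w'$ is not obviously in $L^2$ for all $n$. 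So in fact the most robust argument simply integrates $\tfrac{d}{dr}(r^{n-1}u^2)$ and estimates: since we only need the bound for the \emph{maximum} of $r^{n-1}u(r)^2$ over the relevant range and $u$ is continuous with compact support, one picks $r_0$ realizing that maximum, uses that at an interior maximum the derivative vanishes, and concludes by an elementary comparison. I expect this last technical point to be where the author spends a line or two; everything else is routine. Finally, undo the reduction: approximate a general $u \in H^1_{rad}$ by smooth compactly supported radial functions $u_k$, apply the estimate to each $u_k$, and pass to the limit using a.e. convergence along a subsequence together with continuity of $\|\cdot\|_{1,2}$.
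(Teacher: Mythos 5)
The paper itself does not prove this lemma (it simply refers to Willem's book), so the only question is whether your argument stands on its own; it does not, and the trouble is concentrated at the single non-routine step. After integrating your identity over $(r,\infty)$ (legitimate for radial $u\in C_c^\infty(\R^n)$), you have
\[
r^{n-1}u(r)^2=-(n-1)\int_r^\infty s^{n-2}u(s)^2\,ds-2\int_r^\infty s^{n-1}u(s)u'(s)\,ds .
\]
The first term on the right is nonpositive, so it can simply be discarded; Cauchy--Schwarz applied to the second term then gives, for every $r>0$,
\[
r^{n-1}u(r)^2\le 2\left(\int_0^\infty s^{n-1}u^2\,ds\right)^{1/2}\left(\int_0^\infty s^{n-1}(u')^2\,ds\right)^{1/2}=\frac{2}{\omega_n}\,|u|_{2,\R^n}\,|\nabla u|_{2,\R^n}\le\frac{1}{\omega_n}\,\|u\|_{1,2}^2 ,
\]
and the lemma follows with a constant depending only on $n$. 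You instead treat that term as if it had the unfavorable sign: you estimate its absolute value by $\frac{n-1}{r}\int_r^\infty s^{n-1}u^2\,ds$, which creates the $1/r$ nuisance and near the origin only yields $|u(r)|\lesssim r^{-n/2}$, weaker than the claimed $r^{-(n-1)/2}$. The two repairs you sketch do not close this gap: with $w(s)=s^{(n-1)/2}u(s)$ you would need $\int_0^\infty s^{n-3}u^2\,ds$ under control, which you leave open (for $n\ge 3$ Hardy's inequality would in fact give it, but you never invoke it); and the final suggestion --- pick $r_0$ maximizing $r^{n-1}u(r)^2$ and use that the derivative vanishes there --- is not an argument, since $\frac{d}{dr}\bigl(r^{n-1}u^2\bigr)(r_0)=0$ only relates $u(r_0)$ to $u'(r_0)$ and bounds nothing. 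So the one genuinely nontrivial point of the proof is exactly where the proposal goes astray, and the correct move is to notice the sign, not to estimate the term.

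Everything else is fine: the reduction to smooth, compactly supported radial functions by density and a.e.\ convergence along a subsequence is standard, and the computation above is precisely the classical Strauss argument in the cited reference. Note that what it yields is $|u(x)|\le c\,|x|^{-(n-1)/2}\left(|u|_{2,\R^n}\,|\nabla u|_{2,\R^n}\right)^{1/2}\le c\,|x|^{-(n-1)/2}\|u\|_{1,2}$, i.e.\ the scaling-correct form with the full $H^1$ norm to the first power, which is what your last line states and is all that is needed where the lemma is applied (Proposition \ref{LUS}).
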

\begin{proof}
For the proof of this result see for instance \cite{WLM}.
\end{proof}

We denote by $s_\lambda \in (0,1)$ the global minimum point of $u_\lambda=u_\lambda(r)$, so we have $0<r_\lambda < s_\lambda$, $u_\lambda^-(s_\lambda)=M_{\lambda,-}$. The following proposition gives an information
on the behavior of $r_\lambda$ and $s_\lambda$ as $\lambda \rightarrow 0$.

\begin{prop}\label{LUS}
We have  $s_\lambda \rightarrow 0$ (and so $r_\lambda \rightarrow 0$ as well), as $\lambda \rightarrow 0$. 
\end{prop}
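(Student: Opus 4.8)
The plan is to argue by contradiction: suppose that along some sequence $\lambda_k \to 0$ we have $s_{\lambda_k} \to s_0 > 0$ (passing to a subsequence, since $s_\lambda \in (0,1)$). The strategy is to derive a contradiction with the fact that $M_{\lambda,-} \to +\infty$ (Proposition \ref{PROP1}(iv)), and more precisely with the energy concentration statement $\|u_\lambda^-\|_{B_1}^2 \to S^{n/2}$. The key geometric idea is that if the negative part attains a diverging maximum at a point $s_\lambda$ bounded away from $0$ (and automatically away from $1$, since $u_\lambda^-$ is small near $\partial B_1$ by the boundary condition and elliptic estimates), then $u_\lambda^-$ must concentrate on a sphere of positive radius $s_0$; but the Sobolev-type energy needed to form a concentrating spike on an $(n-1)$-sphere is infinite in the limit, contradicting $\|u_\lambda^-\|_{B_1}^2 \to S^{n/2}$.

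First I would make this precise using the radial lemma of Strauss (Lemma \ref{strauss}) together with the scaling structure. Rescale the negative part around $s_\lambda$: set $v_\lambda(y) := M_{\lambda,-}^{-1} u_\lambda^-\big(s_\lambda + M_{\lambda,-}^{-\beta} y\big)$, defined on a set that, as $\lambda \to 0$, invades a half-space or all of $\R^n$ depending on how fast $r_\lambda$ approaches $s_\lambda$ on this scale. One checks that $v_\lambda$ solves $-\Delta v_\lambda = \lambda M_{\lambda,-}^{-2\beta} v_\lambda + |v_\lambda|^{2^*-2} v_\lambda$ with $0 \le v_\lambda \le 1$ and $v_\lambda(0) = 1$, so by standard elliptic estimates $v_\lambda \to V$ in $C^2_{loc}$, where $V$ solves the critical problem in $\R^n$ (or a half-space), $V(0)=1$. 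Since $V \not\equiv 0$, on a fixed large ball $B_R$ we get $\int_{B_R} |\nabla v_\lambda|^2 \, dy \ge c_0 > 0$ for all small $\lambda$. Undoing the rescaling, the change of variables $x = s_\lambda + M_{\lambda,-}^{-\beta} y$ gives a factor $M_{\lambda,-}^{-n\beta}$ from $dy$ and $M_{\lambda,-}^{2\beta}$ from $|\nabla_y|^2 = M_{\lambda,-}^{-2\beta}|\nabla_x|^2$... — more carefully, $\int_{B_R(0)} |\nabla_y v_\lambda|^2\, dy = M_{\lambda,-}^{-2} \cdot M_{\lambda,-}^{-n\beta+2\beta}\int |\nabla_x u_\lambda^-|^2\, dx$; here the relevant point is that this is a \emph{localized} energy sitting in the thin annular shell $|x - s_\lambda| \lesssim M_{\lambda,-}^{-\beta}$ around the sphere $|x| = s_\lambda$, which has radius $\approx s_0 > 0$.

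The main obstacle — and the heart of the argument — is turning "concentration on a positive-radius sphere costs infinite energy" into a clean contradiction. The cleanest route is a covering/multiplicity argument: because the problem is radial, $u_\lambda^-$ depends only on $|x|$, so the spike of height $M_{\lambda,-}$ located at radius $s_\lambda \approx s_0$ is smeared over the entire sphere $\{|x| = s_\lambda\}$. Tile this sphere by $\sim (s_0 M_{\lambda,-}^\beta)^{n-1} \to \infty$ disjoint caps of radius $\sim M_{\lambda,-}^{-\beta}$; on each cap the restriction of $u_\lambda^-$ carries energy bounded below by a fixed constant $c_0 > 0$ (by the rescaling argument above, applied with center shifted to an arbitrary point of the sphere — radial symmetry makes all these identical). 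Summing over the disjoint caps gives $\|u_\lambda^-\|_{B_1}^2 \ge c_0 (s_0 M_{\lambda,-}^\beta)^{n-1} \to +\infty$, contradicting $\|u_\lambda^-\|_{B_1}^2 \to S^{n/2}$. Hence $s_0 = 0$, i.e. $s_\lambda \to 0$; and since $0 < r_\lambda < s_\lambda$, also $r_\lambda \to 0$. I would need to be a little careful that the energy estimate on each cap really is uniform — this uses that $s_\lambda$ stays in a compact subset of $(0,1)$, so the rescaled domains all contain a fixed ball $B_R$ and the limit profile $V$ is nontrivial with a uniform lower bound on $\int_{B_R}|\nabla V|^2$ — and that the caps can genuinely be taken disjoint with the stated count, which is elementary spherical geometry. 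An alternative to the covering argument, if one prefers, is to invoke directly that a radial $H^1(\R^n)$ function cannot concentrate away from the origin without its norm blowing up, which is essentially the content of Strauss's lemma combined with the scaling; but the covering argument is the most transparent and self-contained.
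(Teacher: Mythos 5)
Your proof is correct in outline, but it takes a genuinely different and much heavier route than the paper. The paper's proof is essentially two lines: assuming $s_{\lambda_m}\geq s_0>0$ along a sequence, it applies the Strauss radial lemma (Lemma \ref{strauss}, stated immediately before the proposition) at the point $s_{\lambda_m}$ to get $M_{\lambda_m,-}=|u_{\lambda_m}(s_{\lambda_m})|\leq c\,\|u_{\lambda_m}\|_{1,2,B_1}^{1/2}/s_0^{(n-1)/2}$, which stays bounded because $\|u_\lambda\|_{B_1}^2\rightarrow 2S^{n/2}$, contradicting $M_{\lambda,-}\rightarrow+\infty$ from Proposition \ref{PROP1}. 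Your blow-up-plus-covering argument reaches a contradiction through the energy rather than through this pointwise bound: it is self-contained (in effect it re-proves, in this situation, the quantitative content of the radial lemma, namely that a radial spike of height $M_{\lambda,-}$ on a sphere of radius $s_0>0$ costs energy of order $(s_0 M_{\lambda,-}^{\beta})^{n-1}$), and it would also work in non-radial settings, which the Strauss argument does not; the price is that you must actually establish the uniform lower bound for the energy in each cap, which forces you to rule out degeneration of the rescaled profile when the rescaled distance from the maximum point to the nodal sphere $\{|x|=r_\lambda\}$ or to $\partial B_1$ tends to zero (boundary gradient estimates, or the half-space Liouville theorem) --- exactly the case analysis the one-line Strauss argument avoids. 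Note also that your parenthetical claim that $s_\lambda$ stays away from $1$ ``by the boundary condition and elliptic estimates'' is not justified as stated, since the solutions are unbounded; fortunately it is not needed, because the covering count only improves as $s_0$ grows, though the cap bound near $\partial B_1$ then falls under the degenerate case just mentioned. For the shortest correct write-up, your own closing alternative is the paper's proof: apply Lemma \ref{strauss} directly at $s_\lambda$.
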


\begin{proof}
Assume by contradiction that $s_{\lambda_m} \geq s_0$ for a sequence $\lambda_m\rightarrow 0$ and for some $0<s_0<1$. Then by Lemma \ref{strauss} we get
$$M_{\lambda_m,-}=|u_{\lambda_m}(s_{\lambda_m})| \leq c \frac{\|u_{\lambda_m}\|_{1,2,B_1}^{1/2}}{s_{\lambda_m}^{(n-1)/2}} \leq c \frac{\|u_{\lambda_m}\|_{1,2,B_1}^{1/2}}{s_0^{(n-1)/2}},$$
where $c$ is a positive constant depending only on $n$.
Since $|\nabla u_\lambda|_{2,B_1}^2 \rightarrow 2S^{n/2}$ as $\lambda \rightarrow 0$ it follows that $M_{\lambda_m,-}$ is bounded, which is a contradiction.
\end{proof}
We recall another well known proposition:

\begin{prop}\label{UNLP}
Let $u \in C^2(\R^n)$ be a solution of
 \begin{equation}\label{WSLPBN}
\begin{cases}
-\Delta  u =   |u|^{2^* -2}u & \hbox{in}\ \R^n\\
  u\rightarrow 0 &\ \hbox{as} \ |y|\rightarrow +\infty.
\end{cases}
\end{equation} 
Assume that $u$ has a finite energy $I_0(u):=\frac{1}{2}|\nabla u|_{2,\R^n}^2-\frac{1}{2^*}|u|_{2^*,\R^n}^{2^*}$ and  $u$ satisfies one of these assumptions:
\begin{description}
\item[(i)] u is positive (negative) in $\R^n$,
\item[(ii)] u is spherically symmetric about some point.
\end{description}
Then there exist $\mu >0$, $x_0 \in \R^n$ such that $u$ is one of the functions $\delta_{x_0,\mu}$, defined in (\ref{stndbubble}).
\end{prop}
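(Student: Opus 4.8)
The plan is to treat the two cases separately, reducing case (i) to the radial case by symmetrization and then closing the radial case by an elementary ODE/phase–plane analysis.

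\emph{Preliminary regularity and decay.} In both cases the finite energy hypothesis means $u\in D^{1,2}(\R^n)\cap L^{2^*}(\R^n)$; a Brezis--Kato bootstrap together with elliptic regularity gives $u\in C^2(\R^n)\cap L^\infty(\R^n)$. Writing $u(x)=c_n\int_{\R^n}|x-y|^{2-n}|u(y)|^{2^*-2}u(y)\,dy$ and bootstrapping the decay, one obtains $|u(x)|=O(|x|^{-(n-2)})$ and $|\nabla u(x)|=O(|x|^{-(n-1)})$ as $|x|\to\infty$. (This is the step where I expect the most care; it is exactly the decay estimate needed to make the Kelvin transform below licit, and it is classical, cf. Caffarelli--Gidas--Spruck.)

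\emph{Case (i): $u>0$.} I would run the moving plane method. By the decay just established, the Kelvin transform $v(x):=|x|^{2-n}u\!\left(x/|x|^2\right)$ is bounded near the origin and solves $-\Delta v=|v|^{2^*-2}v$ on $\R^n\setminus\{0\}$, so the singularity is removable and $v$ extends to a positive $C^2$ solution on all of $\R^n$. Applying the Gidas--Ni--Nirenberg moving plane argument to $v$ (or, alternatively, invoking the Caffarelli--Gidas--Spruck classification of positive solutions directly, which needs no decay assumption) shows that, up to a translation, $u$ is radially symmetric and radially decreasing about some point $x_0$. This reduces case (i) to the radial situation treated next.

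\emph{Case (ii).} We may assume $x_0=0$, so $u=u(r)$ solves $u''+\frac{n-1}{r}u'+|u|^{2^*-2}u=0$ on $(0,\infty)$ with $u'(0)=0$ (smoothness at the origin), $u(r)\to0$ as $r\to\infty$ and $\int_0^\infty\big((u')^2+|u|^{2^*}\big)r^{n-1}\,dr<\infty$. Performing the Emden--Fowler change of variables $u(r)=r^{-\frac{n-2}{2}}w(t)$, $t=\log r$, the equation becomes the autonomous one-degree-of-freedom Hamiltonian system $w''-\big(\tfrac{n-2}{2}\big)^2 w+|w|^{2^*-2}w=0$, with conserved quantity $H(w,w')=\tfrac12(w')^2-\tfrac{(n-2)^2}{8}w^2+\tfrac{1}{2^*}|w|^{2^*}$. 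The boundary and integrability conditions translate into: $w$ is bounded on $\R$ and $(w(t),w'(t))\to(0,0)$ as $t\to\pm\infty$, i.e. $w$ is a homoclinic orbit to the hyperbolic equilibrium $(0,0)$, which lies on the level $\{H=0\}$. A phase–plane analysis shows that, in each half-plane $\{\pm w>0\}$, the stable and unstable manifolds of the saddle coincide along a single trajectory, so $\{H=0\}$ consists of the origin together with exactly two homoclinic loops interchanged by $w\mapsto-w$; since one checks directly that $w(t)=r^{\frac{n-2}{2}}\delta_{0,\mu}(r)\big|_{r=e^t}$ is such a homoclinic for every $\mu>0$, uniqueness up to the time-translation $t\mapsto t-t_0$ (equivalently the scaling $\mu$) forces $u=\pm\delta_{x_0,\mu}$; oddness of the problem gives the stated conclusion. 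The main obstacle here is the phase–plane step, i.e. verifying that the decay/integrability conditions genuinely force a homoclinic orbit (ruling out solutions that vanish at finite $t$ or are unbounded) and that such an orbit is unique up to translation; everything else is routine. If one prefers to bypass the moving plane machinery altogether, one may simply cite Caffarelli--Gidas--Spruck for case (i) and present only the ODE argument above for case (ii).
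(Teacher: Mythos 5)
Your proposal is correct in substance, but it is worth noting that the paper does not prove this proposition at all: its ``proof'' is a one-line reference to Cerami--Solimini--Struwe \cite{1}, Proposition 2.2. What you have written is essentially the classical argument lying behind that citation (Caffarelli--Gidas--Spruck or Gidas--Ni--Nirenberg via Kelvin transform for the positive case, and the Emden--Fowler change of variables with a phase-plane/homoclinic analysis in the radial case, in the spirit of Atkinson--Peletier), so it is a legitimate self-contained route where the paper simply outsources the result. Two points deserve to be made explicit. First, in case (ii) the passage from the hypotheses to ``$(w,w')\to(0,0)$ as $t\to\pm\infty$'' should be argued: since the Fowler equation is autonomous, $H$ is exactly conserved, and the coercivity of the potential $V(w)=-\tfrac{(n-2)^2}{8}w^2+\tfrac1{2^*}|w|^{2^*}$ forces $w$ (hence $w'$, $w''$) to be bounded; finite energy gives $\int_\R |w|^{2^*}\,dt=\int_0^\infty |u|^{2^*}r^{n-1}\,dr<\infty$, which together with uniform continuity yields $w\to0$ at both ends, and then $H=0$ (otherwise $|w'|\to\sqrt{2H}\neq0$ contradicts $w\to0$), so the orbit is indeed homoclinic and must coincide with one of the two loops of the figure-eight level set $\{H=0\}$; this fills the gap you flagged. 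Second, the conclusion as stated (and as stated in the paper) is slightly loose: the trivial solution $u\equiv0$ satisfies all hypotheses in case (ii) and must be excluded, and for $u$ negative or for a radial sign choice the conclusion is $u=\pm\delta_{x_0,\mu}$; your ``oddness of the problem'' remark handles this, but it should be said that one is classifying nontrivial solutions. Finally, if you invoke Caffarelli--Gidas--Spruck directly in case (i), the $O(|x|^{2-n})$ decay bootstrap becomes unnecessary (it is needed only for the Kelvin-transform/moving-plane variant), which shortens the argument you identify as the most delicate preliminary step.
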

\begin{proof}
A sketch of the proof can be found in \cite{1}, Proposition 2.2.
\end{proof}

\subsection{An upper bound for $u_\lambda^{+}$, $u_\lambda^{-}$}
In this section we recall an estimate for positive solutions of (\ref{PBN}) in a ball and we generalize it to get an upper bound for  $u_\lambda^{-}$, which is defined in the annulus $A_{r_\lambda}:=\{x \in \R^n ; r_\lambda <|x| < 1\}$. 
\begin{prop}\label{lbpp}
Let $n\geq 3$ and $u$ be a solution of
 \begin{equation}\label{LAP}
\begin{cases}
-\Delta  u =  \lambda u + u^{\frac{n+2}{n-2}} & \hbox{in}\ \ B_R\\
u>0 &  \hbox{in}\ \ B_R\\
  u=0 &\ \hbox{in} \ \hbox{on}\ \ \partial B_R,
\end{cases}
\end{equation} 
for some positive $\lambda$.
 Then $u(x) \leq w(x,u(0))$ in $B_R$, where
$$w(x,c):= c \left\{1+\frac{c^{-1}f(c)}{n (n-2)}|x|^2\right\}^{-(n-2)/2},$$
and  $f:[0,+\infty) \rightarrow [0,+\infty)$ is the function defined by $f(y):=\lambda y + y^{\frac{n+2}{n-2}}$.
\end{prop}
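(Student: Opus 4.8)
\textbf{Proof proposal for Proposition \ref{lbpp}.}

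The plan is to exploit the radial symmetry of $u$ (guaranteed by the Gidas--Ni--Nirenberg theorem, since $u>0$ solves a problem of the required form in a ball) and to compare $u=u(r)$ with the explicit comparison function $w=w(r,c)$ where $c=u(0)$. First I would check by direct computation that $w(\cdot,c)$ is, up to the radial Laplacian, governed by a suitable ODE: writing $w(r,c)=c\{1+\frac{c^{-1}f(c)}{n(n-2)}r^2\}^{-(n-2)/2}$, one verifies that $-\Delta w = -(w''+\frac{n-1}{r}w')$ equals exactly $f(c)$ times a positive power of $(w/c)$; more precisely the key inequality to establish is $-\Delta w(r,c) \le f(w(r,c))$ would be the wrong direction, so instead I expect the correct observation is that $-\Delta w \ge$ something that dominates the right-hand side evaluated along $u$. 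The cleanest route is the substitution $v(r):=u(r)/c$ versus the function $\phi(r):=\{1+\frac{c^{-1}f(c)}{n(n-2)}r^2\}^{-(n-2)/2}$, so that the claim becomes $v(r)\le\phi(r)$ on $[0,R]$ with $v(0)=\phi(0)=1$ and $v'(0)=\phi'(0)=0$.

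The heart of the argument is a differential-inequality / sliding comparison. I would introduce, for $r>0$, the function $E_u(r):=r^{n-1}u'(r)$ so that $E_u'(r)=-r^{n-1}f(u(r))$ (using the equation and that $u>0$, monotonicity properties of $u$ in the first nodal interval), and similarly $E_w(r):=r^{n-1}\partial_r w(r,c)$. The monotonicity of $f$ on $[0,\infty)$ (it is increasing, being a sum of $\lambda y$ and $y^{(n+2)/(n-2)}$) is what makes the comparison work: as long as $u(r)\le w(r,c)$ we get $f(u(r))\le f(w(r,c))$, which controls the growth of $E_u$ relative to $E_w$, and an integration from $0$ keeps $u$ below $w$. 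Concretely I would argue by contradiction: let $r_0$ be the first point where $u(r_0)=w(r_0,c)$ with $u>w$ just after; on $[0,r_0]$ one has $u\le w$, hence $f(u)\le f(w)$, hence $E_u'(r)=-r^{n-1}f(u(r))\ge -r^{n-1}f(w(r,c))$; but one needs $-r^{n-1}f(w(r,c))\ge E_w'(r)$, i.e. that $w$ is a \emph{supersolution} in the relevant sense, which should follow from the precise algebraic identity satisfied by $w$ (namely that $-\Delta w(r,c) = f(c)(w/c)^{(n+2)/(n-2)} \ge f(c)(w/c) + \text{(something)} \ge$ the needed bound since $w\le c$ makes $(w/c)^{(n+2)/(n-2)}\le (w/c)$, the inequality going the helpful way). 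Integrating $E_u'\ge E_w'$ from $0$ to $r$ with $E_u(0)=E_w(0)=0$ gives $u'(r)\ge w'(r,c)$ on $[0,r_0]$; but both start at the same height $c$ at $r=0$, so $u(r)\ge w(r,c)$ throughout, and combined with $u\le w$ on $[0,r_0]$ this forces $u\equiv w$ there, contradicting that $u$ strictly exceeds $w$ past $r_0$ unless we are more careful. So the right phrasing is: this shows $u$ can never cross above $w$, establishing $u\le w$ on all of $[0,R]$.

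The main obstacle I anticipate is getting the direction of the comparison right at the level of the ODE — i.e., verifying that $w(\cdot,c)$ genuinely is a supersolution of $-\Delta u = f(u)$ (or of a related problem) on the interval where $u\le c$, which hinges on the elementary but crucial inequality $(w/c)^{(n+2)/(n-2)}\le w/c$ valid precisely because $0\le w\le c$, together with $\lambda w \le \lambda c\cdot (w/c)$. One must track carefully that $f(w(r,c)) = \lambda w + w^{(n+2)/(n-2)}$ while the identity for $-\Delta w$ produces $f(c)\cdot(w/c)^{(n+2)/(n-2)}$; showing $f(c)(w/c)^{(n+2)/(n-2)}\ge f(w)$ for $0\le w\le c$ reduces to $\lambda c\, t^{(n+2)/(n-2)} + c^{(n+2)/(n-2)} t^{(n+2)/(n-2)} \ge \lambda c t + c^{(n+2)/(n-2)} t^{(n+2)/(n-2)}$ with $t=w/c\in[0,1]$, i.e. $\lambda c\, t^{(n+2)/(n-2)}\ge \lambda c\, t$, which is \emph{false} for $t\in(0,1)$ — so the naive guess fails and the actual proof in \cite{2} must use a subtler device (likely comparing with the problem $-\Delta u = f(c)(u/c)$, a \emph{linear} problem whose solution is the given $w$ up to the concavity of $f$, or invoking that $y\mapsto f(y)/y$ is increasing so that $f(u)\le f(c)u/c$ whenever $u\le c$). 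I would therefore reorganize around the inequality $f(u)\le \frac{f(c)}{c}\,u$ for $0\le u\le c$ (valid since $f(y)/y$ is nondecreasing), making $w$ a supersolution of the linear equation $-\Delta u = \frac{f(c)}{c}u$; then the standard radial ODE comparison with matched data at $r=0$ closes the argument, and this is the step to present in full detail.
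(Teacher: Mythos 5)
Your proposal does not close, and the gap is precisely in the final ``fix''. You correctly observed that $w$ is not a supersolution of the nonlinear equation, but the repaired claim --- that $w$ is a supersolution of the linear equation $-\Delta v = \frac{f(c)}{c}v$ --- fails for exactly the same reason: a direct computation gives $-\Delta w = f(c)\left(\frac{w}{c}\right)^{\frac{n+2}{n-2}}$, and since $0<w\le c$ we have $\left(\frac{w}{c}\right)^{\frac{n+2}{n-2}}\le \frac{w}{c}$, hence $-\Delta w \le \frac{f(c)}{c}\,w$; so $w$ is a \emph{subsolution} of the linear equation, just like $u$ (which satisfies $-\Delta u = f(u)\le \frac{f(c)}{c}u$ because $f(y)/y$ is nondecreasing and $u\le u(0)=c$). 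Two subsolutions of the same equation cannot be compared. Moreover, even if the directions were right, ``standard radial comparison with matched data at $r=0$'' goes the wrong way here: for data matched at the center, the Wronskian argument (set $W(r):=r^{n-1}(u'w-uw')$ and differentiate, using the radial equations) shows that a subsolution of $-\Delta v=Kv$ stays \emph{above} a supersolution with the same value and slope at $r=0$, not below; and a maximum-principle comparison is unavailable because the zeroth-order coefficient $\frac{f(c)}{c}=\lambda+c^{\frac{4}{n-2}}$ is positive and large (in the application $c=M_{\lambda,+}\rightarrow+\infty$), so $-\Delta-\frac{f(c)}{c}$ does not satisfy the maximum principle on $B_R$. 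Attempting instead a first-crossing comparison between the two nonlinear equations runs into the obstruction you already met: along the way one needs $w^{\frac{4}{n-2}}\le u^{\frac{4}{n-2}}$, i.e.\ the conclusion itself.

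The paper's route (following Atkinson--Peletier \cite{10}, \cite{11}; the details are written out in the proof of Proposition \ref{lbnp} for the annular case) is genuinely different and is not a pointwise sub/supersolution comparison. After reducing to the radial ODE via \cite{5}, one scales out $\lambda$ and performs the change of variables $t=\left(\frac{n-2}{r}\right)^{n-2}$, which turns the equation into the Emden--Fowler form $y''+t^{-k}h(y)=0$ with $k=2\frac{n-1}{n-2}$ and $h(y)=y+y^{2k-3}$. One then establishes a differential inequality for the quantity $y'\,t^{k-1}y^{1-k}$ (inequality (\ref{EF4}) in the paper), whose proof uses the monotonicity of $y$, the bound $y\le\max y$, and the structural inequality $yh'(y)-(2k-3)h(y)=(4-2k)y\le 0$; integrating twice yields exactly the stated bound $u(x)\le w(x,u(0))$. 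If you want a self-contained proof of Proposition \ref{lbpp}, you should reproduce that integrated Emden--Fowler argument rather than look for a maximum-principle comparison.
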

\begin{proof}
The proof is based on the results contained in the papers of Atkinson and Peletier \cite{10}, \cite{11}. Since the solutions of (\ref{LAP}) are radial (see \cite{5}) we consider the ordinary differential equation associated to (\ref{LAP}) which, by some change of variable, can be turned into an Emden-Fowler equation. For it is easy to get the desired upper bound. All details are given in the next Proposition \ref{lbnp}.
\end{proof}

\begin{rem}\label{efrem1}
The previous proposition gives an upper bound for $u_\lambda^+$. In fact, taking into account that $u_\lambda^+$ is defined and positive in  the ball $B_{r_\lambda}$ and $u_\lambda^+(0)=M_{\lambda,+}$, we have
\begin{equation}\label{lbppeq}
\begin{array}{lll}
u_\lambda^+(x) &\leq&\displaystyle M_{\lambda,+} \left\{1+\frac{M_{\lambda,+}^{-1}\ f(M_{\lambda,+})}{n (n-2)}|x|^2\right\}^{-(n-2)/2}\\[16pt]
&=& \displaystyle M_{\lambda,+} \left\{1+\frac{\lambda  + M_{\lambda,+}^{\frac{4}{n-2}}}{n (n-2)}|x|^2\right\}^{-(n-2)/2},
\end{array}
\end{equation}
for all $x \in B_{r_\lambda}$.
\end{rem}

\begin{prop}\label{lbnp}
Let $u_\lambda$ be as in Section 3.1 and $\epsilon \in (0, \frac{n-2}{2})$.  There exist $\delta=\delta(\epsilon) \in (0,1)$, $\delta(\epsilon) \rightarrow 1$ as $\epsilon \rightarrow 0$ and a positive constant $\overline{\lambda}=\overline{\lambda}(\epsilon)$, such that for all $\lambda \in (0,\overline{\lambda})$ we have
\begin{equation}\label{stimapp}
u_\lambda^-(x) \leq M_{\lambda,-} \left\{1+\frac{M_{\lambda,-}^{-1}\ f(M_{\lambda,-})}{n (n-2)} c(\epsilon)|x|^2\right\}^{-(n-2)/2}, 
\end{equation}
for all $x \in A_{\delta, \lambda}$, where $A_{\delta, \lambda}:= \{ x \in \R^n; \ \delta^{- 1/N}s_\lambda <|x|<1\}$,  
 $c(\epsilon)=\frac{2}{n-2}\epsilon$, $s_\lambda$ is the global minimum point of $u_\lambda$, $M_{\lambda,-}=u_\lambda^-(s_\lambda)$ and $f$ is defined as in Proposition \ref{lbpp}. 
\end{prop}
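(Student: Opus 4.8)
The plan is to imitate the proof of Proposition \ref{lbpp} — i.e. the Atkinson–Peletier style argument for positive solutions in a ball — but adapted to the negative part $u_\lambda^-$, which lives in the annulus $A_{r_\lambda}$ and vanishes on the outer boundary $\partial B_1$ rather than at a center. Since $u_\lambda$ is radial, write $v(r):=-u_\lambda(r)=u_\lambda^-(r)$ for $r\in(r_\lambda,1)$; it solves $v''+\frac{n-1}{r}v'+\lambda v+v^{(n+2)/(n-2)}=0$ with $v(r_\lambda)=v(1)=0$, $v>0$ inside, and its maximum $M_{\lambda,-}$ is attained at $s_\lambda$ with $v'(s_\lambda)=0$. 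First I would perform the classical Emden–Fowler change of variables: set $t=r^{2-n}$ (or equivalently work with the function $w(t)$ obtained by the substitution that turns the radial Laplacian into $\frac{d^2}{dt^2}$), which sends the equation into an autonomous-looking second-order ODE of the form $w_{tt}+ (\text{lower order in }\lambda)\, + t^{-\gamma} w^{(n+2)/(n-2)}=0$ for an appropriate power $\gamma=\gamma(n)$, on a bounded $t$-interval. The point $s_\lambda$ becomes a critical point of $w$, the maximum.

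The core of the argument is then a differential-inequality / comparison step. Away from the minimum point $s_\lambda$, on the region $|x|\ge \delta^{-1/N}s_\lambda$, the factor $t^{-\gamma}$ (equivalently a power of $|x|$) can be bounded below by $c(\epsilon)$ times its value at a reference point, and the linear term $\lambda v$ can be absorbed because $\lambda$ is small and $v\le M_{\lambda,-}$; this is exactly where the constants $\delta(\epsilon)\to 1$, $c(\epsilon)=\frac{2}{n-2}\epsilon$ and the threshold $\overline\lambda(\epsilon)$ enter. One obtains that $w$ (hence $u_\lambda^-$) is a subsolution of the ODE satisfied by the explicit comparison function
$$
w(x,c):= c\left\{1+\frac{c^{-1}f(c)}{n(n-2)}\,c(\epsilon)\,|x|^2\right\}^{-(n-2)/2},\qquad c=M_{\lambda,-},
$$
which is the radial solution of $-\Delta W = c(\epsilon)\,f(c)\,\big(W/c\big)^{(n+2)/(n-2)}$ normalized so that $W(0)=M_{\lambda,-}$, $W$ decreasing. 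A maximum-principle / ODE-shooting comparison on the interval then yields $u_\lambda^-(x)\le w(x,M_{\lambda,-})$ on $A_{\delta,\lambda}$. To make the comparison legitimate one must check the ordering of the two functions at the "inner" endpoint $|x|=\delta^{-1/N}s_\lambda$ (where one uses $u_\lambda^-\le M_{\lambda,-}=v(s_\lambda)$ together with $\delta<1$ to guarantee the envelope still dominates) and use the decreasing behavior of $u_\lambda^-$ for $r>s_\lambda$, cf. Corollary \ref{remutile} which rules out spurious interior critical points; near the outer boundary $u_\lambda^-$ is small and the inequality is trivially preserved.

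The main obstacle I expect is the comparison near the minimum point $s_\lambda$ and the bookkeeping of the geometric loss encoded in $\delta^{-1/N}$: unlike the ball case, where the maximum sits at the origin and the comparison function is centered there, here the relevant "center" for the envelope is $0$ but the solution only starts being controlled at radius $\delta^{-1/N}s_\lambda$, so one must quantify precisely how much of the mass/height is lost between $s_\lambda$ and that radius, and show this loss is absorbed by the factor $c(\epsilon)$ with $\delta(\epsilon)\to1$ as $\epsilon\to0$. Equivalently, in the Emden–Fowler variable the difficulty is that the transformed equation is non-autonomous (the $t^{-\gamma}$ weight), so the usual energy $E(r)$ from Proposition \ref{TROLL} is only monotone, not conserved, and one has to trade that monotonicity against the weight on the prescribed sub-interval. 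Once that quantitative estimate is in hand, substituting $c^{-1}f(c)=\lambda+M_{\lambda,-}^{4/(n-2)}$ gives the stated form of \eqref{stimapp}, and letting $\epsilon\to0$ recovers, on larger and larger annuli, the sharp bound with constant $1$.
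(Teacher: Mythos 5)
Your framework (restriction to the radial ODE on $(s_\lambda,1)$, an Emden--Fowler change of variables, monotonicity of $u_\lambda^-$ beyond $s_\lambda$ via Corollary \ref{remutile}) is the same as the paper's, and you correctly locate the difficulty at the inner radius $\delta^{-1/N}s_\lambda$. But the mechanism you propose to close the argument has a genuine gap, in fact two. First, the ``maximum-principle / ODE-shooting comparison'' is not available here: $u_\lambda^-$ is \emph{not} a subsolution of the modified equation $-\Delta W = c(\epsilon)f(M_{\lambda,-})(W/M_{\lambda,-})^{\frac{n+2}{n-2}}$ (dividing by $u^{2^*-1}$, the required inequality reads $\lambda u^{2-2^*}+1\leq c(\epsilon)\bigl(\lambda M_{\lambda,-}^{2-2^*}+1\bigr)$, which fails since $c(\epsilon)<1$ and $u\leq M_{\lambda,-}$), and even with correctly ordered sub/supersolutions a comparison principle for $-\Delta u=\lambda u+u^{2^*-1}$ on an annulus does not follow from the maximum principle, because the linearized coefficient $\lambda+\frac{u^{2^*-1}-W^{2^*-1}}{u-W}$ is large and positive where the solutions are of size $M_{\lambda,-}$. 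Second, and more importantly, your treatment of the inner endpoint is circular: at $|x|=\delta^{-1/N}s_\lambda$ the envelope equals $M_{\lambda,-}\bigl\{1+\tfrac{c(\epsilon)}{n(n-2)}(\lambda+M_{\lambda,-}^{4/(n-2)})\delta^{-2/N}s_\lambda^{2}\bigr\}^{-(n-2)/2}$, which lies \emph{strictly below} $M_{\lambda,-}$ by an amount governed by $(M_{\lambda,-}^{\beta}s_\lambda)^2$; the trivial bound $u_\lambda^-\leq M_{\lambda,-}$ together with $\delta<1$ does not give the ordering there — that ordering is essentially the inequality (\ref{stimapp}) itself at that point, and at this stage one cannot invoke $M_{\lambda,-}^{\beta}s_\lambda\to0$, which is proved later (Proposition \ref{rescpn} and Section 4.3) \emph{using} this proposition.

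For contrast, the paper never uses a comparison function. After rescaling $\rho=\sqrt{\lambda}\,r$ (so the linear term keeps coefficient $1$ — it is not absorbed; it is exactly what produces $f(M_{\lambda,-})$ in (\ref{stimapp})) and setting $t=\bigl(\tfrac{n-2}{\rho}\bigr)^{n-2}$, one gets $y_\lambda''+t^{-k}h(y_\lambda)=0$ with $h(y)=y+y^{2k-3}$, $k=2\tfrac{n-1}{n-2}$, and proves the differential inequality (\ref{EF4}) for the quantity $y_\lambda' t^{k-1}y_\lambda^{1-k}$, using only $y_\lambda'\geq0$ (Corollary \ref{remutile}) through an auxiliary function $L_\lambda$ that is nonincreasing and vanishes at $t_{2,\lambda}$. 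Two explicit integrations from $t_{2,\lambda}$, together with $y_\lambda\leq\gamma_\lambda$, give a lower bound for $y_\lambda^{2-k}$, and the constants arise not from bounding the weight $t^{-k}$ below (as in your sketch) but from the elementary function $g(s)=\tfrac{1}{k-2}+s-\tfrac{k-1}{k-2}s^{\frac{k-2}{k-1}}$ evaluated at $s=(t/t_{2,\lambda})^{k-1}$: $g\geq\epsilon$ exactly for $s\leq\delta(\epsilon)$, which translates into $r\geq\delta^{-1/n}s_\lambda$ and $c(\epsilon)=(k-2)\epsilon$, while $\lambda<\overline\lambda(\epsilon)$ only guarantees $s_\lambda^{n-2}<\delta^{1/(k-1)}$ so this range is nonempty. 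Without this integration step (or a genuinely justified substitute with controlled data at the inner radius), your proposal does not yield the estimate.
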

\begin{rem}\label{uupbld}
The statement of the above proposition holds also for lower dimensions. More precisely, with small modification to the proof of Proposition \ref{lbnp} we have:
\begin{prop}\label{lbnpld}
Let $3 \leq n \leq 6$ and set
$$\tilde \lambda (n):=  \inf \{\lambda \in \R^+; \ \hbox{Problem (\ref{PBN2}) has a radial sign-changing solution}\ u_\lambda\}.$$
There exists $\bar\epsilon \in (0, \frac{n-2}{2})$ such that for all $\epsilon \in (0,\bar\epsilon)$ there exists $\delta=\delta(\epsilon) \in (0,1)$, with $\delta(\epsilon) \rightarrow 1$ as $\epsilon \rightarrow 0$, such that, for all $\lambda$ in a right neighborhood of $\tilde \lambda (n)$, (\ref{stimapp}) holds, where $M_{\lambda,-}=u_\lambda^-(s_\lambda)$, $s_\lambda$ is the global minimum point of $u_\lambda$ in the last nodal region \footnote{We assume without loss of generality that $u_\lambda$ is negative in that region.}.
\end{prop}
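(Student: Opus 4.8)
The plan is to run the proof of Proposition \ref{lbnp} essentially unchanged and to concentrate the work on the one point where the smallness of $\lambda$ is really used. Recall the structure of that argument. Since $u_\lambda$ is radial, $v:=u_\lambda^{-}$ solves, on the interval $(r_\lambda,1)$ where $u_\lambda<0$,
\begin{equation*}
v''+\tfrac{n-1}{r}\,v'+\lambda v+v^{\frac{n+2}{n-2}}=0,\qquad v(r_\lambda)=v(1)=0,
\end{equation*}
and by Corollary \ref{remutile} it has a single interior critical point at $s_\lambda$, with $v(s_\lambda)=M_{\lambda,-}$. The Emden--Fowler change of variables $t=r^{2-n}$, $y(t):=v(r)$, removes the first--order term and turns this into
\begin{equation*}
\ddot y(t)+\tfrac{1}{(n-2)^{2}}\,t^{-\frac{2(n-1)}{n-2}}\bigl(\lambda y+y^{\frac{n+2}{n-2}}\bigr)=0,\qquad y(1)=y(r_\lambda^{2-n})=0,
\end{equation*}
on $t\in(1,r_\lambda^{2-n})$, with a single interior maximum at $t_\lambda:=s_\lambda^{2-n}>1$. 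Since the nonlinearity is positive where $y>0$, the function $y$ is concave there, hence increasing on $(1,t_\lambda)$. In the variable $t$, the function $W$ on the right of (\ref{stimapp}) is (a multiple of) an exact solution of a pure critical equation $-\Delta W=\kappa_\lambda W^{\frac{n+2}{n-2}}$ with $\kappa_\lambda=c(\epsilon)\bigl(1+\lambda M_{\lambda,-}^{-4/(n-2)}\bigr)$; on the increasing branch one compares $y$ with this profile using concavity together with the first integral of the Emden--Fowler equation, in the spirit of \cite{10}, \cite{11}. The bound cannot be pushed up to $t=t_\lambda$ (there $y=M_{\lambda,-}>W(t_\lambda)$), which is exactly why it is obtained only on $t\le\delta t_\lambda$, i.e. $|x|>\delta^{-1/(n-2)}s_\lambda$, that is on the annulus $A_{\delta,\lambda}$; and $\delta(\epsilon)\to1$ as $\epsilon\to0$ because the comparison function increases toward $M_{\lambda,-}$ as its weight $c(\epsilon)=\frac{2\epsilon}{n-2}\to0$.

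The only step that forces $\lambda$ to be small is the one that makes the comparison close: it needs $\kappa_\lambda$ to stay below $1$, uniformly, i.e. $c(\epsilon)\bigl(1+\lambda M_{\lambda,-}^{-4/(n-2)}\bigr)$ bounded away from $1$. For $n\ge7$ this is automatic for every $\epsilon<\frac{n-2}{2}$ because $\lambda\to0$ and $M_{\lambda,-}\to\infty$, so $\lambda M_{\lambda,-}^{-4/(n-2)}\to0$. For $3\le n\le6$ one fixes instead $\lambda$ in a right--neighborhood $[\tilde\lambda(n),\tilde\lambda(n)+\eta]$: there $\lambda$ is bounded, and a standard a priori estimate --- Poincar\'e on the annulus $A_{r_\lambda}$, where $\lambda<\lambda_1(B_1)<\lambda_1(A_{r_\lambda})$, combined with the Sobolev inequality --- bounds $\lambda M_{\lambda,-}^{-4/(n-2)}$ from above, so that $\kappa_\lambda$ is again below $1$, but now only when $\epsilon<\bar\epsilon=\bar\epsilon(n)<\frac{n-2}{2}$, since $c(\epsilon)$ can no longer be taken arbitrarily close to $1$. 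With this single replacement, the change of variables, the concavity, and the comparison go through verbatim and yield (\ref{stimapp}) on $A_{\delta,\lambda}$ for all $\lambda$ in that right--neighborhood.

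I expect the crux to be precisely this uniformity: keeping $\kappa_\lambda$ bounded away from $1$ amounts to an a priori bound on $M_{\lambda,-}$ (equivalently on the scale $t_\lambda$), which for $n\ge7$ is handed over for free by the blow--up analysis and by Proposition \ref{LUS} ($s_\lambda\to0$), but which here has to be extracted from the definition of $\tilde\lambda(n)$ as an infimum, the constraint $\lambda<\lambda_1(B_1)$, and the radial ODE. A minor further point, needed because in low dimensions $s_\lambda$ need not tend to $0$, is to check that the interval $(1,\delta t_\lambda)$ on which the comparison runs is non--degenerate; this is automatic since $t_\lambda=s_\lambda^{2-n}>1$ and $\delta\in(0,1)$.
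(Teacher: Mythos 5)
Your plan to rerun the proof of Proposition \ref{lbnp} is the right one --- the paper presents Proposition \ref{lbnpld} as exactly that, ``with small modification'' --- but you have located the $\lambda$-dependence in the wrong place, and you dismiss as automatic the one point where the modification is actually needed. In the proof of Proposition \ref{lbnp} no condition of the type $\kappa_\lambda=c(\epsilon)\bigl(1+\lambda M_{\lambda,-}^{-4/(n-2)}\bigr)<1$ ever appears: after the scaling $\rho=\sqrt{\lambda}(s+s_\lambda)$, $w_\lambda=\lambda^{-(n-2)/4}v_\lambda$, the parameter $\lambda$ is absorbed into the Emden--Fowler nonlinearity $h(y)=y+y^{2k-3}$, and the whole argument (the differential inequality $(y_\lambda^\prime t^{k-1}y_\lambda^{1-k})^\prime+t^{k-2}y_\lambda^{-k}t_{2,\lambda}^{1-k}\gamma_\lambda h(\gamma_\lambda)\le 0$, obtained from $y_\lambda^\prime\ge0$, its two integrations, and the elementary function $g(s)=\frac{1}{k-2}+s-\frac{k-1}{k-2}s^{(k-2)/(k-1)}$) is valid for every $\lambda>0$ and every value of $M_{\lambda,-}$; there is no comparison with an exact bubble and no size restriction linking $\lambda$ and $M_{\lambda,-}$. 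Smallness of $\lambda$ enters at exactly one point: to ensure $t_{1,\lambda}<\delta^{1/(k-1)}t_{2,\lambda}$, i.e. $s_\lambda^{n-2}<\delta(\epsilon)^{1/(k-1)}$, which for $n\ge7$ follows from $s_\lambda\to0$ (Proposition \ref{LUS}), so that every $\epsilon\in(0,\frac{n-2}{2})$ is admissible there. Hence your proposed Poincar\'e/Sobolev a priori bound on $\lambda M_{\lambda,-}^{-4/(n-2)}$ addresses a constraint that does not exist, and it is not what produces $\bar\epsilon$.

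The point you call automatic is precisely the gap. In your normalization the comparison interval $(1,\delta t_\lambda)$ is nondegenerate iff $\delta t_\lambda>1$, i.e. $\delta>s_\lambda^{n-2}$; the facts $t_\lambda>1$ and $\delta\in(0,1)$ do not give this, and in dimensions $3\le n\le 6$, where $\lambda$ stays near $\tilde\lambda(n)>0$ and $s_\lambda$ need not be small, it fails whenever $\epsilon$ is close to $\frac{n-2}{2}$ (then $\delta(\epsilon)$ is close to $0$). This is exactly why the statement restricts to $\epsilon\in(0,\bar\epsilon)$: one must take $\epsilon$ so small that $\delta(\epsilon)$, which tends to $1$, exceeds $s_\lambda^{n}$ (in the paper's normalization $A_{\delta,\lambda}\neq\emptyset$ iff $s_\lambda^{n}<\delta$) uniformly for $\lambda$ in a right neighborhood of $\tilde\lambda(n)$, and for that uniformity one also needs to check that $s_\lambda$ stays bounded away from $1$ on that neighborhood. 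Replacing ``$s_\lambda\to0$, hence any $\delta(\epsilon)$ works for $\lambda$ small'' by ``$s_\lambda\le s^*<1$, hence $\delta(\epsilon)$ close enough to $1$ works near $\tilde\lambda(n)$'' is the entire content of the modification; your write-up omits it while adding an unnecessary condition whose claimed necessity (the threshold $1$ for $\kappa_\lambda$) is nowhere derived in your sketch.
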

\end{rem}

\begin{proof}[Proof of Proposition \ref{lbnp}]
Let $v_\lambda$ the function defined by $v_\lambda(s):=u_\lambda^-(s+s_\lambda)$, $s \in (0,1-s_\lambda)$. Since $u_\lambda^-$ is a positive radial solution of (\ref{PBN}) then $v_\lambda$ is a solution of
 \begin{equation}\label{EF1}
\begin{cases}
v_\lambda^{\prime \prime} +\frac{n-1}{s+s_\lambda}v_\lambda^{\prime} + \lambda v_\lambda + v_\lambda^{2^* -1} =0  & \hbox{in}\ \ (0,1-s_\lambda)\\
v_\lambda^\prime( 0)=0, \ \ v_\lambda(1-s_\lambda)=0.&
\end{cases}
\end{equation}
 To eliminate $\lambda$ from the equation we make the following change of variable, $\rho:=\sqrt{\lambda} \ (s+s_\lambda)$, and we define $w_\lambda(\rho):=\lambda^{- \frac{n-2}{4}}v_\lambda(\frac{\rho}{\sqrt{\lambda}}-s_\lambda)=\lambda^{- \frac{n-2}{4}}u_\lambda^-(\frac{\rho}{\sqrt{\lambda}})$. By elementary computation we see that $w_\lambda$ solves
  \begin{equation}\label{EF2}
\begin{cases}
w_\lambda^{\prime \prime} +\frac{n-1}{\rho}w_\lambda^{\prime} + w_\lambda + w_\lambda^{2^* -1} =0  & \hbox{in}\ \ (\sqrt{\lambda}\  s_\lambda,\sqrt{\lambda})\\
w_\lambda^\prime( \sqrt{\lambda}\ s_\lambda)=0, \ \ w_\lambda(\sqrt{\lambda})=0.&
\end{cases}
\end{equation}
Making another change of variable, precisely $t:=\left(\frac{n-2}{\rho}\right)^{n-2}$, and setting
$y_\lambda(t):=w_\lambda\left(\frac{n-2}{t^{\frac{1}{n-2}}}\right)$ we eliminate the first derivative in (\ref{EF2}). Thus we get
  \begin{equation}\label{EF3}
\begin{cases}
y_\lambda^{\prime \prime} \ t^k + y_\lambda + y_\lambda^{2^* -1} =0  & \hbox{in}\ \ \left(\frac{(n-2)^{n-2}}{\lambda^{\frac{n-2}{2}}},\frac{(n-2)^{n-2}}{\lambda^{\frac{n-2}{2}}\ s_\lambda^{n-2}}\right),\\
y_\lambda^\prime\left( \frac{(n-2)^{n-2}}{\lambda^{\frac{n-2}{2}} s_\lambda^{n-2}}\right)=0, \ \ y_\lambda \left(\frac{(n-2)^{n-2}}{\lambda^{\frac{n-2}{2}}}\right)=0.&
\end{cases}
\end{equation}

where $\displaystyle k= 2 \frac{n-1}{n-2}>2$. To simplify the notation we set $t_{1,\lambda}:=\frac{(n-2)^{n-2}}{\lambda^{\frac{n-2}{2}}}$, $t_{2,\lambda}:=\frac{(n-2)^{n-2}}{\lambda^{\frac{n-2}{2}}\ s_\lambda^{n-2}}$, $I_\lambda=(t_{1,\lambda}, t_{2,\lambda})$ and $\gamma_\lambda:=y_\lambda(t_{2,\lambda})=\lambda^{- \frac{n-2}{4}}M_{\lambda,-}$. 
Observe also that $2^*-1=2k-3$.

We write the equation in (\ref{EF3}) as $y_\lambda^{\prime \prime} + t^{-k}(y_\lambda + y_\lambda^{2k-3}) =0$, which is an Emden-Fowler type equation $y^{\prime \prime} + t^{-k}h(y)=0$ with $h(y):=y+y^{2k-3}$.
The first step to prove (\ref{stimapp}) is the following inequality:

\begin{equation}\label{EF4}
(y_\lambda^\prime t^{k-1} y_\lambda^{1-k})^\prime + t^{k-2}y_\lambda^{-k}t_{2,\lambda}^{1-k}\gamma_\lambda h(\gamma_\lambda)\leq 0, \ \ \hbox{for all} \ t\in I_\lambda. 
\end{equation}
To prove (\ref{EF4}) we differentiate $y_\lambda^\prime t^{k-1} y_\lambda^{1-k}$. 
Since $y_\lambda^{\prime \prime} + t^{-k}h(y_\lambda)=0$ we get
\begin{eqnarray*}
&y_\lambda^{\prime\prime} t^{k-1} y_\lambda^{1-k} + y_\lambda^\prime (k-1)t^{k-2} y_\lambda^{1-k}-(k-1) (y_\lambda^\prime)^2 t^{k-1} y_\lambda^{-k}\\[8pt]
=&- t^{-k}(y_\lambda + y_\lambda^{2k-3}) t^{k-1} y_\lambda^{1-k} + y_\lambda^\prime (k-1)t^{k-2} y_\lambda^{1-k}-(k-1) (y_\lambda^\prime)^2 t^{k-1} y_\lambda^{-k}\\[8pt]
=&- t^{-1} y_\lambda^{2-k} - t^{-1} y_\lambda^{k-2} + y_\lambda^\prime (k-1)t^{k-2} y_\lambda^{1-k}-(k-1) (y_\lambda^\prime)^2 t^{k-1} y_\lambda^{-k}\\[8pt]
 =&-2(k-1)t^{k-2}y_\lambda^{-k}\left(\frac{1}{2(k-1)}t^{1-k}y_\lambda^2+\frac{1}{2(k-1)}t^{1-k}y_\lambda^{2k-2} - \frac{1}{2}y_\lambda y_\lambda^\prime + \frac{1}{2} t (y_\lambda^\prime)^2\right)\\[8pt]
  =&-2(k-1)t^{k-2}y_\lambda^{-k}\left(\frac{1}{2(k-1)}t^{1-k}y_\lambda h(y_\lambda) - \frac{1}{2}y_\lambda y_\lambda^\prime + \frac{1}{2} t (y_\lambda^\prime)^2\right).\\[8pt]
\end{eqnarray*}
Now we add and subtract the number $\frac{1}{2(k-1)}t_{2,\lambda}^{1-k}\gamma_\lambda h(\gamma_\lambda)$ inside the parenthesis, so we have
\begin{eqnarray*}
\begin{array}{ll}
&(y_\lambda^\prime t^{k-1} y_\lambda^{1-k})^\prime\\[8pt]
 =&-2(k-1)t^{k-2}y_\lambda^{-k}\left(\frac{1}{2(k-1)}t^{1-k}y_\lambda h(y_\lambda) - \frac{1}{2}y_\lambda y_\lambda^\prime + \frac{1}{2} t (y_\lambda^\prime)^2 -\frac{1}{2(k-1)}t_{2,\lambda}^{1-k}\gamma_\lambda h(\gamma_\lambda)\right)\\[8pt]
 &- t^{k-2}y_\lambda^{-k}t_{2,\lambda}^{1-k}\gamma_\lambda h(\gamma_\lambda).
 \end{array}
\end{eqnarray*}
Setting $L_\lambda(t):=\frac{1}{2(k-1)}t^{1-k}y_\lambda h(y_\lambda) - \frac{1}{2}y_\lambda y_\lambda^\prime + \frac{1}{2} t (y_\lambda^\prime)^2 -\frac{1}{2(k-1)}t_{2,\lambda}^{1-k}\gamma_\lambda h(\gamma_\lambda)$ we get

$$(y_\lambda^\prime t^{k-1} y_\lambda^{1-k})^\prime +  t^{k-2}y_\lambda^{-k}t_{2,\lambda}^{1-k}\gamma_\lambda h(\gamma_\lambda) = -2(k-1)t^{k-2}y_\lambda^{-k} L_\lambda(t).$$
If we show that $L_\lambda(t)\geq 0$ for all $t \in I_\lambda$ we get (\ref{EF4}). By definition it's immediate to verify that $L_\lambda(t_{2,\lambda})=0$, also by direct calculation we have $L_\lambda^\prime(t)=\frac{1}{2(k-1)} t^{1-k}y_\lambda^\prime [y_\lambda h^\prime(y_\lambda)-(2k-3)h(y_\lambda)]=\frac{1}{2(k-1)} t^{1-k}y_\lambda^\prime [(4-2k)y_\lambda] $. Since $y_\lambda>0$, $y_\lambda^\prime\geq0$  in $I_\lambda$ \footnote{$y_\lambda^\prime\geq 0$ because $(u_\lambda^-)^\prime(r) \leq 0$ for $s_\lambda<r<1$ as we can easily deduce from Corollary \ref{remutile}.} and $k>2$ we have $L_\lambda^\prime(t)\leq 0$ in $I_\lambda$, and from $L_\lambda(t_{2,\lambda})=0$ it follows $L_\lambda(t)\geq 0$ for all $t \in I_\lambda$.

As second step we integrate (\ref{EF4}) between $t$ and $t_{2,\lambda}$, for all $t \in I_\lambda$. Then, since $y_\lambda^\prime(t_{2,\lambda})=0$ we get 
$$-y_\lambda^\prime(t) t^{k-1} y_\lambda^{1-k}(t) + \int_{t}^{t_{2,\lambda}} s^{k-2}y_\lambda^{-k}(s)\ t_{2,\lambda}^{1-k}\gamma_\lambda h(\gamma_\lambda) \ ds \leq 0.$$

We rewrite this last inequality as
$$y_\lambda^\prime(t) t^{k-1} y_\lambda^{1-k}(t) \geq   t_{2,\lambda}^{1-k}\gamma_\lambda h(\gamma_\lambda) \ \int_{t}^{t_{2,\lambda}} s^{k-2}y_\lambda^{-k}(s)\ ds. $$
Since  $u_\lambda^- \leq M_{\lambda,-}$ by definition it follows $y_\lambda^{-k} \geq \gamma_\lambda^{-k}$,  so
\begin{eqnarray*}
\begin{array}{lll}
y_\lambda^\prime(t) t^{k-1} y_\lambda^{1-k}(t) & \geq&  \displaystyle t_{2,\lambda}^{1-k}\gamma_\lambda^{1-k} h(\gamma_\lambda)  \ \int_{t}^{t_{2,\lambda}} s^{k-2}\ ds\\[12pt]
&=&\displaystyle \frac{\gamma_\lambda^{1-k} h(\gamma_\lambda)}{k-1}  \ \frac{t_{2,\lambda}^{k-1}-t^{k-1}}{t_{2,\lambda}^{k-1}}\\[12pt]
&=&\displaystyle \frac{\gamma_\lambda^{1-k} h(\gamma_\lambda)}{k-1}  \left[1- \left(\frac{t}{t_{2,\lambda}}\right)^{k-1}\right].
\end{array}
\end{eqnarray*}
Multiplying the first and the last term of the above inequality by $t^{1-k}$ we get
$$\frac{1}{2-k}(y_\lambda^{2-k})^\prime(t)=y_\lambda^\prime(t)\  y_\lambda^{1-k}(t)  \geq \frac{\gamma_\lambda^{1-k} h(\gamma_\lambda)}{k-1}  \left(t^{1-k}- \frac{1}{t_{2,\lambda}^{k-1}}\right),$$
for all $t \in I_\lambda$. Integrating this inequality between $t$ and $t_{2,\lambda}$ we have

\begin{eqnarray*}
\begin{array}{lll}
\displaystyle \frac{\gamma_\lambda^{2-k}}{2-k} - \frac{y_\lambda^{2-k}(t)}{2-k}  &\geq& \displaystyle \frac{\gamma_\lambda^{1-k} h(\gamma_\lambda)}{k-1}  \int_{t}^{t_{2,\lambda}}\left(s^{1-k}- \frac{1}{t_{2,\lambda}^{k-1}}\right) ds \\[10pt]
&=& \displaystyle \frac{\gamma_\lambda^{1-k} h(\gamma_\lambda)}{k-1}  \left( \frac{t_{2,\lambda}^{2-k}}{2-k}-\frac{t^{2-k}}{2-k} - \frac{1}{t_{2,\lambda}^{k-2}}+ \frac{t}{{t_{2,\lambda}^{k-1}}}\right).
\end{array}
\end{eqnarray*}
We rewrite this last inequality as
\begin{equation}\label{EF5}
\begin{array}{lll}
\displaystyle \frac{y_\lambda^{2-k}(t)}{k-2}-\frac{\gamma_\lambda^{2-k}}{k-2} &\geq&\displaystyle  \frac{\gamma_\lambda^{1-k} h(\gamma_\lambda)}{k-1}  \left( \frac{t^{2-k}}{k-2}  + \frac{t}{{t_{2,\lambda}^{k-1}}} - \frac{k-1}{k-2} \frac{1}{t_{2,\lambda}^{k-2}}\right) \\[10pt]
&\geq&\displaystyle  \frac{\gamma_\lambda^{1-k} h(\gamma_\lambda)}{k-1} \ t^{2-k} \left[ \frac{1}{k-2}  + \left(\frac{t}{{t_{2,\lambda}}}\right)^{k-1} - \frac{k-1}{k-2} \left(\frac{t}{{t_{2,\lambda}}}\right)^{k-2}\right].
\end{array}
\end{equation}
To the aim of estimating the last term in (\ref{EF5}) we set $s:=\left(\frac{t}{{t_{2,\lambda}}}\right)^{k-1}$ and  study the function $g(s):=\frac{1}{k-2} + s -  \frac{k-1}{k-2} s^{\frac{k-2}{k-1}} $ in the interval $[0,1]$. Clearly  $g(0)=\frac{1}{k-2}=\frac{n-2}{2}>0$, $g(1)=0$ and $g$ is a decreasing function because $g^\prime(s)=1-s^{-\frac{1}{k-1}}< 0$ in $(0,1)$. In particular we have $g(s)>0$ in $(0,1)$. 
Let's fix $\epsilon \in (0, \frac{n-2}{2})$, by the monotonicity of $g$ we deduce that there exists only one $\delta=\delta(\epsilon) \in (0,1)$ such that $g(s)> \epsilon$ for all $0\leq s < \delta$, $g(\delta)=\epsilon$ and $\delta \rightarrow 1$ as $\epsilon \rightarrow 0$. Now remembering that $s=\left(\frac{t}{{t_{2,\lambda}}}\right)^{k-1}$, we have $\left(\frac{t}{{t_{2,\lambda}}}\right)^{k-1}<\delta$ if and only if $t < \delta^{\frac{1}{k-1}} t_{2,\lambda}$ and $t_{1,\lambda}<\delta^{\frac{1}{k-1}} t_{2,\lambda}$ if and only if $s_\lambda^{n-2} < \delta^{\frac{1}{k-1}}$ which is true for all $0<\lambda < \overline{\lambda}$, for some positive number $\overline{\lambda}=\overline{\lambda}(\epsilon)$. Setting $c(\epsilon):=(k-2) \epsilon$, from (\ref{EF5}) and the previous discussion we have
\begin{equation}\label{EF6}
\begin{array}{lll}
\displaystyle {y_\lambda^{2-k}(t)}-{\gamma_\lambda^{2-k}} \geq\displaystyle  \frac{\gamma_\lambda^{1-k} h(\gamma_\lambda)}{k-1} \ t^{2-k} c(\epsilon),
\end{array}
\end{equation}
for all $t \in (t_{1,\lambda},\  {\delta^{\frac{1}{k-1}}}t_{2,\lambda})$, $0<\lambda < \overline{\lambda}$.
Now from (\ref{EF6}) we deduce the desired bound for $u_\lambda^-$. In fact we have 
$$\displaystyle {y_\lambda^{2-k}(t)} \geq\displaystyle  {\gamma_\lambda^{2-k}}+ \frac{\gamma_\lambda^{1-k} h(\gamma_\lambda)}{k-1} \ t^{2-k} c(\epsilon),$$
from which, since $k>2$, we get

\begin{equation}\label{EF7}
\begin{array}{lll}
\displaystyle {y_\lambda(t)} &\leq&\displaystyle \left(  {\gamma_\lambda^{2-k}}+ \frac{\gamma_\lambda^{1-k} h(\gamma_\lambda)}{k-1} \ t^{2-k} c(\epsilon) \right)^{- \frac{1}{k-2}}\\[10pt]
&=&\displaystyle \gamma_\lambda \ \left(1+ \frac{\gamma_\lambda^{-1} h(\gamma_\lambda)}{k-1} \ t^{2-k} c(\epsilon) \right)^{- \frac{1}{k-2}}
\end{array}
\end{equation}
Now  by definition we have $y_\lambda(t)=\lambda^{- \frac{n-2}{4}} u_\lambda^-\left(\frac{\rho}{\sqrt{\lambda}}\right)=\lambda^{- \frac{n-2}{4}} u_\lambda^-(s+s_\lambda)$, $\gamma_\lambda=\lambda^{- \frac{n-2}{4}} M_{\lambda,-}$, $k-2=\frac{2}{n-2}$, $k-1=\frac{n}{n-2}$, $t=\left(\frac{n-2}{\rho}\right)^{n-2}=\left(\frac{n-2}{\sqrt{\lambda} (s+s_\lambda)}\right)^{n-2}$, in particular $t^{2-k}=t^{-\frac{2}{n-2}}=\left(\frac{\sqrt{\lambda} (s+s_\lambda)}{n-2}\right)^2=\frac{{\lambda} (s+s_\lambda)^2}{(n-2)^2}$. Thus we get 
\begin{equation*}
\begin{array}{lll}
\displaystyle \frac{\gamma_\lambda^{-1} h(\gamma_\lambda)}{k-1} \ t^{2-k} c(\epsilon)&=&\displaystyle \frac{\lambda^{\frac{n-2}{4}}M_{\lambda,-}^{-1}\left(\lambda^{- \frac{n-2}{4}} M_{\lambda,-} + \lambda^{- \frac{n+2}{4}}M_{\lambda,-}^{\frac{n+2}{n-2}}\right)}{\displaystyle \frac{n}{n-2}} c(\epsilon) \frac{\lambda (s+s_\lambda)^{2}}{(n-2)^2} \\[20pt]
&=&\displaystyle  \frac{M_{\lambda,-}^{-1}\left(\lambda M_{\lambda,-} + M_{\lambda,-}^{2^*-1}\right)}{n(n-2)} c(\epsilon) (s+s_\lambda)^{2}\\[10pt]
&=&\displaystyle \frac{M_{\lambda,-}^{-1}\ f\left(M_{\lambda,-}\right)}{n(n-2)} c(\epsilon) (s+s_\lambda)^{2},
\end{array}
\end{equation*}
where $f(z):=\lambda z + z^{2^*-1}$. Also by direct computation we see that the interval $(t_{1,\lambda},\  {\delta^{\frac{1}{k-1}}}t_{2,\lambda})$, corresponds to the interval $(\delta^{-\frac{1}{n}}s_\lambda, 1)$ for $s+s_\lambda=\frac{\rho}{\sqrt{\lambda}}=\frac{n-2}{\sqrt{\lambda}\ t^{\frac{1}{n-2}}}$. Thus from the previous computations and (\ref{EF7}) we have 
$$ \lambda^{- \frac{n-2}{4}} u_\lambda^-(s+s_\lambda) \leq {\lambda^{- \frac{n-2}{4}} M_{\lambda,-}}\ {\left(1+\frac{M_{\lambda,-}^{-1}\ f\left(\lambda M_{\lambda,-}\right)}{n(n-2)} c(\epsilon) (s+s_\lambda)^{2}\right)^{-\frac{n-2}{2}}}.$$
Finally dividing each term by $ \lambda^{- \frac{n-2}{4}}$ and setting $r:=s+s_\lambda$ we have
$$ u_\lambda^-(r) \leq  {\left(1+\frac{M_{\lambda,-}^{-1}\ f\left(\lambda M_{\lambda,-}\right)}{n(n-2)} c(\epsilon) r^{2}\right)^{-\frac{n-2}{2}}},$$
for all $r \in (\delta^{-\frac{1}{n}}s_\lambda, 1)$, which is the desired inequality since $u_\lambda^-$ is a radial function. 
\end{proof}

\section{Asymptotic analysis of the rescaled solutions}
\subsection{Rescaling the positive part}
As in Section 3 we consider a family $(u_\lambda)$ of least energy radial, sign-changing solutions of (\ref{PBN}) with $u_\lambda (0) >0$. Let us define $\beta:=\frac{2}{n-2}$, $\sigma_\lambda:=M_{\lambda, +}^\beta \cdot r_\lambda$; consider the rescaled function $\tilde u_\lambda^+(y)=\frac{1}{M_{\lambda, +}}u_\lambda^+\left(\frac{y}{M_{\lambda, +}^\beta}\right)$ in $B_{\sigma_\lambda}$. The following lemma is elementary but crucial.
\begin{lem}\label{ELLEM} We have:
\begin{description}
\item[(i)] $\|u_\lambda^+\|_{B_{r_\lambda}}^2=\|\tilde u_\lambda^+\|_{B_{\sigma_\lambda}}^2$,
\item[(ii)]$|u_\lambda^+|_{2^*,B_{r_\lambda}}^{2^*}=|\tilde u_\lambda^+|_{2^*,B_{\sigma_\lambda}}^{2^*}$,
\item[(iii)]$|u_\lambda^+|_{2,B_{r_\lambda}}^{2}=\frac{1}{M_{\lambda,+}^{2^*-2}}|\tilde u_\lambda^+|_{2,B_{\sigma_\lambda}}^{2}$
\end{description}
\end{lem}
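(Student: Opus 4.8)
The plan is to prove all three identities by the single change of variables $x = y/M_{\lambda,+}^{\beta}$, equivalently $y = M_{\lambda,+}^{\beta}x$, under which the ball $B_{\sigma_\lambda}$ (of radius $\sigma_\lambda = M_{\lambda,+}^{\beta}r_\lambda$) is carried onto $B_{r_\lambda}$, and $dy = M_{\lambda,+}^{n\beta}\,dx$. The only content is bookkeeping of the powers of $M_{\lambda,+}$, using $\beta = \frac{2}{n-2}$, so that $n\beta = 2^{*}$, $(n-2)\beta = 2$, and $2^{*}-2 = \frac{4}{n-2} = 2\beta$.

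First, for (i), differentiate: from $\tilde u_\lambda^+(y) = M_{\lambda,+}^{-1}u_\lambda^+(y/M_{\lambda,+}^{\beta})$ one gets $\nabla_y \tilde u_\lambda^+(y) = M_{\lambda,+}^{-1-\beta}(\nabla u_\lambda^+)(y/M_{\lambda,+}^{\beta})$, hence $|\nabla_y \tilde u_\lambda^+(y)|^{2} = M_{\lambda,+}^{-2-2\beta}|(\nabla u_\lambda^+)(y/M_{\lambda,+}^{\beta})|^{2}$. Integrating over $B_{\sigma_\lambda}$ and applying the change of variables gives
$$\|\tilde u_\lambda^+\|_{B_{\sigma_\lambda}}^{2} = M_{\lambda,+}^{n\beta - 2 - 2\beta}\,\|u_\lambda^+\|_{B_{r_\lambda}}^{2} = M_{\lambda,+}^{(n-2)\beta - 2}\,\|u_\lambda^+\|_{B_{r_\lambda}}^{2} = \|u_\lambda^+\|_{B_{r_\lambda}}^{2},$$
since $(n-2)\beta - 2 = 0$.

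For (ii), directly $|\tilde u_\lambda^+(y)|^{2^{*}} = M_{\lambda,+}^{-2^{*}}|u_\lambda^+(y/M_{\lambda,+}^{\beta})|^{2^{*}}$, so after the change of variables $|\tilde u_\lambda^+|_{2^{*},B_{\sigma_\lambda}}^{2^{*}} = M_{\lambda,+}^{n\beta - 2^{*}}\,|u_\lambda^+|_{2^{*},B_{r_\lambda}}^{2^{*}}$, and $n\beta - 2^{*} = 0$. For (iii), similarly $|\tilde u_\lambda^+|_{2,B_{\sigma_\lambda}}^{2} = M_{\lambda,+}^{n\beta - 2}\,|u_\lambda^+|_{2,B_{r_\lambda}}^{2} = M_{\lambda,+}^{2^{*}-2}\,|u_\lambda^+|_{2,B_{r_\lambda}}^{2}$, because $n\beta - 2 = \frac{4}{n-2} = 2^{*}-2$; dividing by $M_{\lambda,+}^{2^{*}-2}$ yields the stated form. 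There is essentially no obstacle here: the lemma is purely a scaling computation, and the only point requiring care is checking that the three exponents of $M_{\lambda,+}$ are respectively $0$, $0$, and $2^{*}-2$, which is exactly the reason the rescaling was chosen with exponent $\beta = \frac{2}{n-2}$.
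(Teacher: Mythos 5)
Your proof is correct and is essentially the same as the paper's: both rest on the change of variables $x=y/M_{\lambda,+}^{\beta}$ with $dy=M_{\lambda,+}^{n\beta}dx$ and the exponent identities $n\beta=2^{*}$, $2+2\beta=2^{*}$, and your power bookkeeping in all three items checks out.
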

\begin{proof}
To prove (i) we have only to remember the definition of $\tilde u_\lambda$ and make the change of variable $x \rightarrow \frac{y}{M_{\lambda, +}^\beta}$. Taking into account that by definition $\nabla_y \tilde u_\lambda^+(y)=\frac{1}{M_{\lambda, +}^{1+\beta}} (\nabla_x  u_\lambda^+)(\frac{y}{M_{\lambda, +}^\beta})$ and $2+2\beta=2+\frac{4}{n-2}= n\frac{2}{n-2}=n \beta = 2^*$, we get
\begin{equation*}
\begin{array}{lll}
\|u_\lambda^+\|_{B_{r_\lambda}}^2 &\displaystyle=\int_{B_{r_\lambda}} |\nabla_x u_\lambda^+(x)|^2 dx&\displaystyle= \frac{1}{M_{\lambda, +}^{n\beta}}\int_{B_{\sigma_\lambda}} \left|\nabla_x u_\lambda^+\left( \frac{y}{M_{\lambda, +}^\beta}\right)\right|^2 dy\\[12pt]
&\displaystyle =\frac{M_{\lambda, +}^{2+2\beta}}{M_{\lambda, +}^{n\beta}}\int_{B_{\sigma_\lambda}} \left|\nabla_y \tilde u_\lambda(y)\right|^2 dy&=\|\tilde u_\lambda^+\|_{B_{\sigma_\lambda}}^2.
\end{array}
\end{equation*}
The proof of (ii) is simpler:
\begin{equation*}
\begin{array}{lll}
\displaystyle \int_{B_{r_\lambda}} | u_\lambda^+(x)|^{2^*} dx&=&\displaystyle\int_{B_{\sigma_\lambda}} \frac{1}{M_{\lambda, +}^{n\beta}} \left| u_\lambda^+\left( \frac{y}{M_{\lambda, +}^\beta}\right)\right|^{2^*} dy\\[18pt]
&=&\displaystyle \int_{B_{\sigma_\lambda}} |\tilde u_\lambda^+(y) |^{2^*} dy.
\end{array}
\end{equation*}
The proof of (iii) is similar:
\begin{equation*}
\begin{array}{lll}
\displaystyle\int_{B_{r_\lambda}} | u_\lambda^+(x)|^{2} dx&=&\displaystyle\int_{B_{\sigma_\lambda}} \frac{1}{M_{\lambda, +}^{n\beta}} \left| u_\lambda^+\left( \frac{y}{M_{\lambda, +}^\beta}\right)\right|^{2} dy\\[18pt]
&=&\displaystyle \int_{B_{\sigma_\lambda}}\frac{1}{M_{\lambda, +}^{n\beta-2}} \left| \frac{1}{M_{\lambda, +}}u_\lambda^+\left( \frac{y}{M_{\lambda, +}^\beta}\right)\right|^{2} dy\\[18pt]
&=&\displaystyle\frac{1}{M_{\lambda, +}^{2^*-2}} \int_{B_{\sigma_\lambda}} |\tilde u_\lambda^+(y) |^{2} dy.
\end{array}
\end{equation*}
\end{proof}
\begin{rem}\label{santoro}
Obviously the previous lemma is still true if we consider any radial function $u \in H_{rad}^1(D)$, where $D$ is a radially symmetric domain in $\R^n$, and for any rescaling of the kind $\tilde u (y):=\frac{1}{M} u\left(\frac{y}{M^\beta}\right)$, where $M>0$ is a constant.
\end{rem}
The first qualitative result concerns the asymptotic behavior, as $\lambda \rightarrow 0$, of the radius $\sigma_\lambda=M_{\lambda, +}^\beta \cdot r_\lambda$ of the rescaled ball $B_{\sigma_\lambda}$. 
From Proposition \ref{LUS} we know that $r_\lambda \rightarrow 0$ as $\lambda \rightarrow 0$, so this result gives also information on the growth of $M_{\lambda, +}$ compared to the decay of $r_\lambda$.

\begin{prop} \label{rodota}
Up to a subsequence, $\sigma_\lambda \rightarrow + \infty$  as $\lambda \rightarrow 0$.
 \end{prop}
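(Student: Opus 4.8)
The plan is to argue by contradiction, comparing the rescaled positive part with the standard bubble. Suppose the assertion fails; then $\sigma_\lambda\le C$ for some $C>0$ and all $\lambda$ small. After the change of variables, $\tilde u_\lambda^+$ is a positive solution of
\[
-\Delta \tilde u_\lambda^+=\frac{\lambda}{M_{\lambda,+}^{2\beta}}\,\tilde u_\lambda^+ +(\tilde u_\lambda^+)^{2^*-1}\ \ \text{in }B_{\sigma_\lambda},\qquad \tilde u_\lambda^+=0\ \ \text{on }\partial B_{\sigma_\lambda},
\]
with $\tilde u_\lambda^+(0)=\|\tilde u_\lambda^+\|_\infty=1$. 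Rescaling the estimate of Remark \ref{efrem1} (set $x=y/M_{\lambda,+}^\beta$, so that $|x|^2=|y|^2 M_{\lambda,+}^{-4/(n-2)}$) and using $\lambda M_{\lambda,+}^{-2\beta}\ge 0$, one gets the uniform pointwise bound
\[
0\le \tilde u_\lambda^+(y)\le \Big(1+\tfrac{|y|^2}{n(n-2)}\Big)^{-\frac{n-2}{2}}=\delta_{0,\mu}(y)\qquad\text{for all } y\in B_{\sigma_\lambda},\ \lambda\in(0,\lambda_1),
\]
where $\mu=\sqrt{n(n-2)}$; note that $\delta_{0,\mu}\in L^{2^*}(\R^n)$.

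Next I would extend $\tilde u_\lambda^+$ by zero to a function $w_\lambda$ on $\R^n$, which by the contradiction hypothesis vanishes outside $B_C$. By Lemma \ref{ELLEM} (i)--(ii) and Proposition \ref{PROP1} (i)--(ii),
\[
\int_{\R^n}|\nabla w_\lambda|^2\,dy=\|u_\lambda^+\|_{B_{r_\lambda}}^2\ \longrightarrow\ S^{n/2},\qquad \int_{\R^n}|w_\lambda|^{2^*}\,dy=|u_\lambda^+|_{2^*,B_{r_\lambda}}^{2^*}\ \longrightarrow\ S^{n/2}.
\]
Hence $(w_\lambda)$ is bounded in $H^1_0(B_C)$, so along a subsequence $w_\lambda\rightharpoonup \bar v$ in $H^1_0(B_C)$, $w_\lambda\to\bar v$ in $L^2(B_C)$ and a.e.\ in $\R^n$. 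Since $0\le w_\lambda\le \delta_{0,\mu}\in L^{2^*}(\R^n)$, dominated convergence gives $w_\lambda\to\bar v$ in $L^{2^*}(\R^n)$, so $|\bar v|_{2^*,\R^n}^{2^*}=S^{n/2}$, while $\bar v\ge 0$, $\bar v\not\equiv 0$, and $\bar v\equiv 0$ a.e.\ outside $\overline{B_C}$.

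Finally I would invoke the Sobolev inequality on $\R^n$: weak lower semicontinuity gives $\int_{\R^n}|\nabla\bar v|^2\le \liminf_\lambda\int_{\R^n}|\nabla w_\lambda|^2=S^{n/2}$, whereas $S\,|\bar v|_{2^*,\R^n}^2=S\,(S^{n/2})^{2/2^*}=S^{n/2}\le \int_{\R^n}|\nabla\bar v|^2$. Thus equality holds in the Sobolev inequality, so $\bar v$ is an extremal; after multiplying by a suitable positive constant (so that the Euler--Lagrange equation becomes the critical one) one obtains a positive, finite-energy solution of $-\Delta v=v^{2^*-1}$ in $\R^n$ vanishing at infinity, and Proposition \ref{UNLP} (i) forces it to be one of the functions $\delta_{x_0,\mu}$. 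In particular $\bar v$ would be strictly positive on all of $\R^n$, contradicting $\bar v\equiv 0$ outside $\overline{B_C}$. This contradiction shows that, up to a subsequence, $\sigma_\lambda\to+\infty$.

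The step I expect to be the main obstacle is transferring the $L^{2^*}$-norm to the weak limit, i.e.\ excluding loss of mass at infinity for the extended functions $w_\lambda$; this is exactly what the uniform decay estimate of Remark \ref{efrem1} is for, via dominated convergence. An alternative, more classical route is to observe that $u_\lambda^+$ restricted to $B_{r_\lambda}$ is itself a positive solution of (\ref{PBN}) in the ball $B_{r_\lambda}$: rescaling $B_{r_\lambda}$ to $B_1$, the function $U_\lambda(x):=r_\lambda^{(n-2)/2}u_\lambda^+(r_\lambda x)$ is a positive solution in $B_1$ with parameter $\mu_\lambda:=\lambda r_\lambda^2$, hence (by uniqueness of positive solutions in the ball, \cite{6},\cite{9}) the least energy one, a direct computation gives $\sigma_\lambda=\|U_\lambda\|_\infty^{\beta}$, and since $\mu_\lambda\le\lambda\to 0$ the blow-up results of Han \cite{HAN} and Rey \cite{Rey} give $\|U_\lambda\|_\infty\to+\infty$, whence $\sigma_\lambda\to+\infty$ (in fact for the whole family).
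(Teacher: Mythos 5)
Your argument is correct, but it follows a genuinely different route from the paper. The paper's proof distinguishes the two bounded alternatives: $\sigma_\lambda\to 0$ is excluded by the direct computation $|u_\lambda^+|_{2^*,B_{r_\lambda}}^{2^*}\le \frac{\omega_n}{n}\sigma_\lambda^n\to 0$ against Proposition \ref{PROP1}, while $\sigma_\lambda\to l\in(0,\infty)$ is excluded by passing to the $C^2_{loc}$ limit in the rescaled equation, proving the boundary condition $\tilde u=0$ on $\partial B_l$ through the rescaling of the whole sign-changing solution $u_\lambda$, and then invoking Pohozaev's identity in the ball $B_l$, which forces $\tilde u\equiv 0$ and contradicts $\tilde u(0)=1$. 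You instead treat both cases at once: assuming only $\sigma_\lambda\le C$, you use the Atkinson--Peletier bound of Remark \ref{efrem1} in rescaled form (this is exactly Proposition \ref{rescpn}'s analogue for the positive part, i.e.\ Proposition \ref{rescpp}, whose proof does not rely on Proposition \ref{rodota}, so there is no circularity), extend by zero, and by dominated convergence obtain a nonzero, nonnegative limit $\bar v\in H_0^1(B_C)$ with $|\bar v|_{2^*}^{2^*}=S^{n/2}$ and $\|\nabla\bar v\|_2^2\le S^{n/2}$, i.e.\ a function attaining the best Sobolev constant on a bounded domain. This is impossible, and indeed the paper uses precisely this non-attainment fact (Struwe, Theorem III.1.2) in Lemma \ref{minneo}, so the ingredient is within its toolkit; citing it directly is cleaner than your detour through Proposition \ref{UNLP}, which as stated requires $\bar v$ to be a classical, strictly positive solution -- you would need to add Brezis--Kato regularity and the strong maximum principle (which, by itself, already contradicts $\bar v\equiv 0$ outside $\overline{B_C}$). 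What each approach buys: yours dispenses with the delicate boundary-condition argument and the case distinction, at the price of using the uniform pointwise decay estimate earlier than the paper does; the paper's Pohozaev argument is more self-contained at this stage, needing only local elliptic theory. Your second, alternative sketch (rescaling $B_{r_\lambda}$ to $B_1$, uniqueness of the positive solution, and the blow-up of $\|u_\mu\|_\infty$ as $\mu\to 0$) is also viable and would even give the conclusion for the whole family, but it imports uniqueness and the positive-solution asymptotics of Han and Rey, which the paper deliberately avoids relying on here.
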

 \begin{proof}
  Up to a subsequence, as $\lambda \rightarrow 0$, we have three alternatives:
 \begin{description}
\item[(i)] $\sigma_\lambda \rightarrow 0$,
\item[(ii)] $\sigma_\lambda \rightarrow l >0$, $l\in\R$,
\item[(iii)] $\sigma_\lambda \rightarrow +\infty$.
\end{description}
We will show that (i) and (ii) cannot occur. Assume, by contradiction, that (i) holds then
writing $|u_\lambda^+|_{2^*, B_{r_\lambda}}^{2^*}$ in polar coordinates we have
\begin{eqnarray*}
\displaystyle |u_\lambda^+ |_{2^*, B_{r_\lambda}}^{2^*}&=&\displaystyle \omega_n \int_{0}^{r_\lambda}[u_\lambda^+(r)]^{2^*} r^{n-1} dr\\[8pt]
&\leq& \displaystyle \omega_n \ M_{\lambda, +}^{2^*}  \int_0^{r_\lambda} r^{n-1} dr \\[8pt]
&=&\displaystyle \omega_n \ (M_{\lambda,+}^{\beta})^n \ \frac{r_{\lambda}^{n}}{n}\\[8pt]
&=& \displaystyle \frac{\omega_n}{n}\ (M_{\lambda, +}^{\beta}\ r_{\lambda})^{n} \rightarrow 0 \ \ \ \hbox{as} \ \lambda\rightarrow 0.
\end{eqnarray*}
But from Proposition \ref{PROP1} we know that $ |u_\lambda^+ |_{2^*, B_{r_\lambda}}^{2^*} \rightarrow S^{n/2}$ as $\lambda \rightarrow 0$, so we get a contradiction.

Next assume by contradiction that (ii) holds. Since the rescaled functions $\tilde u_\lambda^+$ are solutions of
\begin{equation}\label{prematurata2}
\begin{cases}
-\Delta  u =\frac{\lambda}{M_\lambda^{2\beta}} u+   u^{2^* -1} & \hbox{in}\ B_{\sigma_\lambda}\\
 u > 0 & \hbox{in} \  B_{\sigma_\lambda}\\
 u =0 & \hbox{on} \ \partial B_{\sigma_\lambda}.
\end{cases}
\end{equation}
and $(\tilde u_\lambda^+)$ is uniformly bounded, then by standard elliptic theory, $\tilde u_\lambda^+ \rightarrow \tilde u $ in $C_{loc}^2(B_l)$, where $B_l$ is the limit domain of $B_{\sigma_\lambda}$ and $\tilde u$ solves
\begin{equation}\label{prematurata3}
\begin{cases}
-\Delta  u =   u^{2^* -1} & \hbox{in}\ B_{l}\\
 u > 0& \hbox{in}\ B_{l}.
\end{cases}
\end{equation}
Let us show that the boundary condition $\tilde u =0 \  \hbox{on} \ \partial B_{l}$ holds.
Since $M_{\lambda,+}$ is the global maximum of $u_\lambda$ (see Proposition \ref{TROLL}) then the rescaling $\tilde u_\lambda(y):=\frac{1}{M_{\lambda, +}}u_\lambda\left(\frac{y}{M_{\lambda, +}^\beta}\right)$ of the whole function $u_\lambda$  is a bounded solution of 

\begin{equation*}
\begin{cases}
-\Delta u =\frac{\lambda}{M_\lambda^{2\beta}}u+  |u|^{2^* -2} u & \hbox{in}\ B_{M_{\lambda,+}^\beta}\\
 u =0 & \hbox{on} \ \partial B_{M_{\lambda,+}^\beta}.
\end{cases}
\end{equation*}
So as before we get that $\tilde u_\lambda \rightarrow \tilde u_0$ in $C^2_{loc}(\R^n)$, where $\tilde u_0$ is a solution of $-\Delta u=  | u|^{2^* -2} u$ in $\R^n$. Obviously by definition we have  $\tilde u_\lambda (y)=\tilde u_\lambda^+(y)$ for all $y \in B_{\sigma_\lambda}$, $\tilde u_\lambda (y)=0$ for all $y \in \partial B_{\sigma_\lambda}$ and  $\tilde u_\lambda(y) <0 $ for all $y \in B_{M_{\lambda,+}^\beta} - \overline{B_{\sigma_\lambda}}$. Passing to the limit as $\lambda \rightarrow 0$,  since $\overline{B_l}$ is a compact set of $\R^n$ we have $\tilde u_\lambda \rightarrow \tilde u_0$ in $C^2(\overline{B_l})$, now  since $\tilde u=\tilde u_0> 0$ in $B_l$ and $\tilde u_0=0$ on $\partial B_l$, it follows $\tilde u=0$ on $\partial B_l$.
Since $B_l$ is a ball, by Pohozaev's identity, we know that the only possibility is $\tilde u \equiv 0$ which is a contradiction since $\tilde u(0)=1$. So the assertion is proved. 
 \end{proof}
 \begin{prop}\label{rescpp}
We have:
 \begin{equation}\label{ubpp}
 \tilde u_\lambda^+ (y)\leq \left\{1+  \frac{ 1}{n (n-2)}\left|{y}\right|^2\right\}^{-(n-2)/2},
\end{equation}
 for all $y \in \R^n$.
\end{prop}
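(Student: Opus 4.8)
The plan is to deduce the pointwise bound for $\tilde u_\lambda^+$ directly from the upper bound for $u_\lambda^+$ recorded in Remark \ref{efrem1}, by performing the same rescaling $y \mapsto y/M_{\lambda,+}^\beta$ and then passing to the limit $\lambda \to 0$. First I would recall from (\ref{lbppeq}) that for all $x \in B_{r_\lambda}$,
\begin{equation*}
u_\lambda^+(x) \leq M_{\lambda,+}\left\{1+\frac{\lambda + M_{\lambda,+}^{\frac{4}{n-2}}}{n(n-2)}|x|^2\right\}^{-(n-2)/2}.
\end{equation*}
Substituting $x = y/M_{\lambda,+}^\beta$ with $y \in B_{\sigma_\lambda}$, dividing by $M_{\lambda,+}$, and using $\beta = \frac{2}{n-2}$ so that $M_{\lambda,+}^{2\beta} = M_{\lambda,+}^{\frac{4}{n-2}}$ and hence $M_{\lambda,+}^{\frac{4}{n-2}}|x|^2 = |y|^2$, I get
\begin{equation*}
\tilde u_\lambda^+(y) \leq \left\{1+\frac{\lambda M_{\lambda,+}^{-2\beta} + 1}{n(n-2)}|y|^2\right\}^{-(n-2)/2}, \qquad y \in B_{\sigma_\lambda}.
\end{equation*}

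Next I would observe that, by Proposition \ref{PROP1}, $M_{\lambda,+} \to +\infty$, so $\lambda M_{\lambda,+}^{-2\beta} \to 0$ as $\lambda \to 0$; more importantly, since $\lambda, M_{\lambda,+}^{-2\beta} > 0$, we already have for every fixed $\lambda$ the cruder but $\lambda$-uniform bound $\tilde u_\lambda^+(y) \leq \{1 + \frac{1}{n(n-2)}|y|^2\}^{-(n-2)/2}$ on $B_{\sigma_\lambda}$, because dropping the positive term $\lambda M_{\lambda,+}^{-2\beta}$ only increases the right-hand side. Then I would fix an arbitrary $y_0 \in \R^n$; by Proposition \ref{rodota}, $\sigma_\lambda \to +\infty$ along a subsequence, so $y_0 \in B_{\sigma_\lambda}$ for $\lambda$ small along that subsequence, and the uniform bound applies at $y_0$. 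Finally, using that $\tilde u_\lambda^+ \to \tilde u$ pointwise (indeed in $C^2_{loc}$, by the elliptic estimates invoked in the proof of Proposition \ref{rodota}), passing to the limit gives $\tilde u(y_0) \leq \{1+\frac{1}{n(n-2)}|y_0|^2\}^{-(n-2)/2}$; but $\tilde u = \delta_{0,\mu}$ with $\mu = \sqrt{n(n-2)}$ is precisely this function, which also explains why the inequality is in fact sharp. Since the limit function is independent of the subsequence, the bound holds for the full family.

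Alternatively — and this is probably the cleanest route to state — I would note that the inequality $\tilde u_\lambda^+(y) \leq \{1+\frac{1}{n(n-2)}|y|^2\}^{-(n-2)/2}$ holds on $B_{\sigma_\lambda}$ for \emph{every} $\lambda$ (not just in the limit), directly from the rescaled form of (\ref{lbppeq}) by discarding the nonnegative term $\lambda M_{\lambda,+}^{-2\beta}|y|^2$ in the denominator. Combined with $\sigma_\lambda \to \infty$, this shows: for each $y \in \R^n$ and each $\lambda$ small enough that $|y| < \sigma_\lambda$, the bound holds at $y$; passing to the limit then transfers it to $\tilde u_\lambda^+$ itself for all $\lambda$ by a diagonal/subsequence argument, or one simply states the bound for the limit profile. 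I expect no serious obstacle here: the only points requiring care are (a) checking the exponent bookkeeping $2\beta = \frac{4}{n-2}$ so that the $M_{\lambda,+}^{\frac{4}{n-2}}|x|^2$ term becomes exactly $|y|^2$ after rescaling, and (b) being precise about whether the claimed bound (\ref{ubpp}) is asserted for the limit $\delta_{0,\mu}$ or for each $\tilde u_\lambda^+$ on its domain $B_{\sigma_\lambda}$ — in the latter reading the statement is an immediate consequence of Remark \ref{efrem1} with no limiting argument needed at all, and the extension to all of $\R^n$ follows from Proposition \ref{rodota}.
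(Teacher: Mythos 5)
Your central computation coincides with the paper's proof: rescale (\ref{lbppeq}) via $x=y/M_{\lambda,+}^{\beta}$, divide by $M_{\lambda,+}$, use $2\beta=\frac{4}{n-2}$ so that $M_{\lambda,+}^{\frac{4}{n-2}}|x|^2=|y|^2$, and discard the nonnegative term $\frac{\lambda}{M_{\lambda,+}^{2\beta}\,n(n-2)}|y|^2$ in the bracket; this yields (\ref{ubpp}) on $B_{\sigma_\lambda}$ for every $\lambda$, with no limiting argument. Up to that point you and the paper agree.

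The trouble is in how you conclude. The proposition asserts the bound for $\tilde u_\lambda^+$ itself, for each (small) $\lambda$ and every $y\in\R^n$ --- it is used later exactly as a $\lambda$-uniform dominating function in integrals over the whole rescaled domain --- so your first route, which passes to the limit and obtains the inequality for the limit profile, proves a different (weaker) statement; it is moreover circular, because the identification of the limit with $\delta_{0,\mu}$ in Theorem \ref{mainteo} itself uses (\ref{ubpp}) to get the decay at infinity required before Proposition \ref{UNLP} can be invoked. In your ``alternative'' reading you rightly note that no limit is needed on $B_{\sigma_\lambda}$, but the extension to all of $\R^n$ does \emph{not} follow from Proposition \ref{rodota}: $\sigma_\lambda\to+\infty$ only says that a fixed $y$ eventually lies in $B_{\sigma_\lambda}$ as $\lambda\to 0$, not that the bound holds at every $y\in\R^n$ for a given small $\lambda$. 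The paper's reason is different and elementary: $\tilde u_\lambda^+$ vanishes identically outside $B_{\sigma_\lambda}$ (it is the rescaling of $u_\lambda^+$, which is supported in the nodal ball $B_{r_\lambda}$), while the right-hand side of (\ref{ubpp}) is positive, so the inequality is trivial there. Replace your last step by this observation and the proof is complete.
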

 \begin{proof}
From (\ref{lbppeq}) for all $x \in B_{r_\lambda}$  we have 
 $$u_\lambda^+(x) \leq M_{\lambda,+} \left\{1+\frac{\lambda  + M_{\lambda,+}^{\frac{4}{n-2}}}{n (n-2)}|x|^2\right\}^{-(n-2)/2}.$$  
 Dividing each side by $M_{\lambda,+}$ and setting $ x=\frac{y}{M_{\lambda,+}^\beta}=\frac{y}{M_{\lambda,+}^{\frac{2}{n-2}}}$ we get
\begin{eqnarray*}
\begin{array}{lll}
\frac{1}{M_{\lambda,+}}u_\lambda^+\left(\frac{y}{M_{\lambda,+}^\beta}\right) &\leq& \left\{1+\frac{\lambda  + M_{\lambda,+}^{\frac{4}{n-2}}}{M_{\lambda,+}^{\frac{4}{n-2}} \  n (n-2)}\left|{y}\right|^2\right\}^{-(n-2)/2}\\[18pt]
&=&\left\{1+ \frac{\lambda}{M_{\lambda,+}^{\frac{4}{n-2}} \ n (n-2)}|y|^2 +  \frac{ 1}{n (n-2)}\left|{y}\right|^2\right\}^{-(n-2)/2}\\[18pt]
&\leq&\left\{1+  \frac{ 1}{n (n-2)}\left|{y}\right|^2\right\}^{-(n-2)/2},
 \end{array}
 \end{eqnarray*}
 for all $y \in B_{\sigma_\lambda}$. Thus we have proved (\ref{ubpp})  for all $y \in B_{\sigma_\lambda}$.
 Since $\tilde u_\lambda^+$ is zero outside the ball $B_{\sigma_\lambda}$ and the second term in (\ref{ubpp}) is independent of $\lambda$, this bound holds in the whole $\R^n$.
 \end{proof}

\subsection{An estimate on the first derivative at the node}
In this subsection we prove an inequality concerning $(u_\lambda^+)^\prime(r_\lambda)$ (or $(u_\lambda^-)^\prime(r_\lambda)$)  that will be useful in the next sections. 
\begin{lem}\label{LEM1}
There exists a constant $c_1$, depending only on $n$, such that 
\begin{equation}\label{stimdernodo}
|(u_\lambda^+)^\prime(r_\lambda) r_\lambda^{n-1}| \leq c_1\ r_\lambda^{\frac{n-2}{2}} 
\end{equation}
for all sufficiently small $\lambda>0$. Since $(u_\lambda^-)^\prime(r_\lambda) = -(u_\lambda^+)^\prime(r_\lambda)$ the same inequality holds for $(u_\lambda^-)^\prime(r_\lambda)$.
\end{lem}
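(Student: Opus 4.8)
The plan is to integrate the radial equation satisfied by $u_\lambda^+$ on $(0,r_\lambda)$ and then feed in the pointwise upper bound of Remark \ref{efrem1} (equivalently Proposition \ref{rescpp}).

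\emph{Step 1: an exact identity.} On $(0,r_\lambda)$ the function $u_\lambda^+$ solves $\big(r^{n-1}(u_\lambda^+)'\big)'=-\,r^{n-1}\big(\lambda u_\lambda^+ + (u_\lambda^+)^{2^*-1}\big)$ with $(u_\lambda^+)'(0)=0$. Integrating over $(0,r_\lambda)$, and recalling that $(u_\lambda^+)'(r_\lambda)\le 0$ because $u_\lambda^+>0$ in $B_{r_\lambda}$ and $u_\lambda^+(r_\lambda)=0$, we obtain
\[
\big|(u_\lambda^+)'(r_\lambda)\big|\,r_\lambda^{\,n-1}=\int_0^{r_\lambda} r^{n-1}\Big(\lambda\, u_\lambda^+(r)+\big(u_\lambda^+(r)\big)^{2^*-1}\Big)\,dr .
\]
So the lemma reduces to an $O\!\big(r_\lambda^{(n-2)/2}\big)$ estimate for the right-hand side.

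\emph{Step 2: estimating the two integrals.} Put $U(y):=\big\{1+\tfrac{1}{n(n-2)}|y|^2\big\}^{-(n-2)/2}$ (which is $\delta_{0,\mu}$ for $\mu=\sqrt{n(n-2)}$), so that by Proposition \ref{rescpp} one has $u_\lambda^+(r)\le M_{\lambda,+}\,U\big(M_{\lambda,+}^{\beta} r\big)$ for $r\in[0,r_\lambda]$. The change of variable $s=M_{\lambda,+}^{\beta} r$, together with the identities $\beta n=2^*$, $1-2^*+2\beta=-1$ and $\sigma_\lambda=M_{\lambda,+}^{\beta} r_\lambda$, gives
\[
\int_0^{r_\lambda} r^{n-1}\big(u_\lambda^+\big)^{2^*-1}\,dr\ \le\ \frac{1}{M_{\lambda,+}}\int_0^{\infty} s^{n-1}U(s)^{2^*-1}\,ds\ =\ \frac{c_1(n)}{M_{\lambda,+}},
\]
the integral converging because $s^{n-1}U(s)^{2^*-1}=O(s^{-3})$ as $s\to\infty$, and
\[
\lambda\int_0^{r_\lambda} r^{n-1}u_\lambda^+\,dr\ \le\ \lambda\,M_{\lambda,+}^{\,1-2^*}\int_0^{\sigma_\lambda} s^{n-1}U(s)\,ds\ \le\ C_n\,\lambda\big(M_{\lambda,+}^{\,1-2^*}+M_{\lambda,+}^{-1}r_\lambda^{\,2}\big),
\]
where we used $\int_0^{\sigma_\lambda} s^{n-1}U(s)\,ds\le C_n\,(1+\sigma_\lambda^2)$ and $\sigma_\lambda^2=M_{\lambda,+}^{2\beta}r_\lambda^2$. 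Since $M_{\lambda,+}\to+\infty$, $\lambda\to 0$, $r_\lambda<1$ and $1-2^*\le-1$, all these terms are bounded by $c_1\,M_{\lambda,+}^{-1}$ for a suitable constant $c_1$ depending only on $n$ (the dominant contribution being $c_1(n)/M_{\lambda,+}$ from the critical term), so $\big|(u_\lambda^+)'(r_\lambda)\big|\,r_\lambda^{\,n-1}\le c_1\,M_{\lambda,+}^{-1}$.

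\emph{Step 3: conclusion.} Finally $M_{\lambda,+}\,r_\lambda^{(n-2)/2}=\big(M_{\lambda,+}^{\beta}r_\lambda\big)^{(n-2)/2}=\sigma_\lambda^{(n-2)/2}\to+\infty$ by Proposition \ref{rodota}, whence $M_{\lambda,+}^{-1}\le r_\lambda^{(n-2)/2}$ for all small $\lambda$; combined with Step 2 this yields (\ref{stimdernodo}). The inequality for $(u_\lambda^-)'(r_\lambda)$ follows at once, since $u_\lambda\in C^1$ forces $(u_\lambda^-)'(r_\lambda)=-(u_\lambda^+)'(r_\lambda)$. The only genuinely delicate point is the lower-order term $\lambda\int_0^{r_\lambda} r^{n-1}u_\lambda^+\,dr$: the trivial bound $u_\lambda^+\le M_{\lambda,+}$ would only give $\lambda M_{\lambda,+}r_\lambda^n/n$, which in general is much larger than $M_{\lambda,+}^{-1}$, so one really must exploit the decay built into the bound of Remark \ref{efrem1}/Proposition \ref{rescpp} and the exponent identity $1-2^*+2\beta=-1$; the critical term, by contrast, is comparatively harmless once rescaled.
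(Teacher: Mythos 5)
Your proof is correct, but it takes a genuinely different route from the paper's. After the common first step (integrating the radial equation over $(0,r_\lambda)$ with $(u_\lambda^+)'(0)=0$), the paper does not use the pointwise Atkinson--Peletier bound at all: it simply applies H\"older's inequality to the two resulting integrals and invokes the boundedness of $|u_\lambda^+|_{2,B_{r_\lambda}}$ and $|u_\lambda^+|_{2^*,B_{r_\lambda}}^{2^*}$ from Proposition \ref{PROP1}, together with $r_\lambda\rightarrow 0$ (Proposition \ref{LUS}), obtaining a bound of order $\lambda\, r_\lambda^{n/2}+r_\lambda^{(n-2)/2}$ and hence (\ref{stimdernodo}) directly. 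You instead feed the uniform bound of Remark \ref{efrem1}/Proposition \ref{rescpp} into the rescaled integrals (your exponent bookkeeping $\beta n=2^*$, $1-2^*+2\beta=-1$ is correct) and reach the sharper estimate $|(u_\lambda^+)'(r_\lambda)|\,r_\lambda^{n-1}\leq c\,M_{\lambda,+}^{-1}$, which is in fact the true order (compare (i) of Proposition \ref{asestprop1}), and then convert it into (\ref{stimdernodo}) through $M_{\lambda,+}\,r_\lambda^{(n-2)/2}=\sigma_\lambda^{(n-2)/2}\rightarrow+\infty$. What each approach buys: the paper's argument is more elementary, needing only energy-level information and neither (\ref{ubpp}) nor Proposition \ref{rodota}, while yours costs these extra inputs but anticipates the precise rate exploited later in Section 5. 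One small point of rigor: Proposition \ref{rodota} is stated only up to a subsequence, whereas (\ref{stimdernodo}) is asserted for all small $\lambda$; this is harmless, since the proof of Proposition \ref{rodota} rules out every subsequence along which $\sigma_\lambda$ stays bounded, so $\sigma_\lambda\rightarrow+\infty$ for the whole family, but you should state this explicitly when you use it in Step 3 (likewise, $M_{\lambda,+}^{1-2^*}\leq M_{\lambda,+}^{-1}$ uses $M_{\lambda,+}\geq 1$, which holds for small $\lambda$ by Proposition \ref{PROP1}).
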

\begin{proof}
Since $u_\lambda^+=u_\lambda^+(r)$ is a solution of $-[(u_\lambda^+)^\prime r^{n-1}]^\prime = \lambda u_\lambda^+ r^{n-1} + (u_\lambda^+)^{2^*-1} r^{n-1}$ in $(0,r_\lambda)$ and $(u_\lambda^+)^\prime(0)=0$ by integration we get 
\begin{eqnarray*}
(u_\lambda^+)^\prime(r_\lambda)  r_\lambda^{n-1} &=& - \left[  \int_0^{r_\lambda}\lambda u_\lambda^+ r^{n-1} dr +  \int_0^{r_\lambda}(u_\lambda^+)^{2^*-1} r^{n-1} dr\right]\\[8pt]
&=&\displaystyle -\left[  \frac{\lambda}{\omega_n}  \int_{B_{r_\lambda}}u_\lambda^+(x)\  dx +\frac{1}{\omega_n}  \int_{B_{r_\lambda}}[u_\lambda^+(x)]^{2^*-1} \ dx\right],
\end{eqnarray*}
where, as before,  $\omega_n$ denotes the measure of the $(n-1)$-dimensional unit sphere $S^{n-1}$. Using H\"{o}lder's inequality and observing that $meas(B_{r_\lambda})=\displaystyle \frac{\omega_n}{n} r_\lambda^n$ we deduce 
\begin{eqnarray*}
\left|(u_\lambda^+)^\prime(r_\lambda)  r_\lambda^{n-1} \right|&\leq \displaystyle   \frac{\lambda}{(n \ \omega_n)^{\frac{1}{2}}} r_\lambda^{\frac{n}{2}}|u_\lambda^+|_{2,B_{r_\lambda}} + \frac{1}{n^{\frac{n-2}{2n}}\ \omega_n^{\frac{n+2}{2n}}} r_\lambda^{\frac{n-2}{2}}\left[|u_\lambda^+|_{2^*,B_{r_\lambda}}^{2^*}\right]^{\frac{2^*-1}{2^*}}.
\end{eqnarray*}
From Proposition \ref{PROP1} we know that both $|u_\lambda^+|_{2,B_{r_\lambda}}$, $|u_\lambda^+|_{2^*,B_{r_\lambda}}^{2^*}$ are bounded, moreover from Proposition \ref{LUS} we have $r_\lambda \rightarrow 0$ as $\lambda \rightarrow 0$. So there exists a constant $c_1=c_1(n)$ such that for all sufficiently small $\lambda>0$ (\ref{stimdernodo}) holds.
\end{proof}
\subsection{Rescaling the negative part}

Now we study the rescaled function $\tilde u_\lambda^- (y):=\frac{1}{M_{\lambda, -}}u_\lambda^-\left(\frac{y}{M_{\lambda, -}^\beta}\right)$ in the annulus $A_{\rho_\lambda}:=\{y \in \R^n; M_{\lambda, -}^\beta r_\lambda <|y|<M_{\lambda, -}^\beta\}$, where $\rho_\lambda:=M_{\lambda, -}^\beta r_\lambda$. This case is more delicate than the previous one since the radius $s_\lambda$, where the the minimum is achieved, depends on $\lambda$.
Thus, roughly speaking, we have to understand how $r_\lambda$ and  $s_\lambda$ behave with respect to the scaling parameter $M_{\lambda,-}^\beta$. This means that we  have to study the asymptotic behavior of $M_{\lambda,-}^\beta r_\lambda$ and $M_{\lambda,-}^\beta s_\lambda$ as $\lambda \rightarrow 0$. It will be convenient to consider also the one-dimensional rescaling  
$$ z_\lambda(s):=\frac{1}{M_{\lambda, -}}u_\lambda^-\left(s_\lambda+\frac{s}{M_{\lambda, -}^\beta}\right),$$
which satisfies
\begin{equation}\label{ORPBN}
\begin{cases}
z_\lambda^{\prime \prime} +\frac{n-1}{s+M_{\lambda, -}^\beta s_\lambda}z_\lambda^{\prime} + \frac{\lambda}{M_{\lambda, -}^{2\beta}} z_\lambda + z_\lambda^{2^* -1} =0  & \hbox{in}\ \ (a_\lambda,b_\lambda)\\
z_\lambda^\prime( 0)=0, \ \ z_\lambda(0)=1,&
\end{cases}
\end{equation}
where $a_\lambda:=M_{\lambda, -}^\beta \cdot (r_\lambda - s_\lambda)<0$,  $b_\lambda:=M_{\lambda, -}^\beta \cdot (1 - s_\lambda)>0$.
We define $\gamma_\lambda := M_{\lambda, -}^\beta s_\lambda$.

Since $s_\lambda \rightarrow 0$ as $\lambda \rightarrow 0$, we have $b_\lambda \rightarrow +\infty$; for the remaining parameters $a_\lambda, \gamma_\lambda$ it will suffice to study the asymptotic behavior of $\gamma_\lambda$ as $\lambda \rightarrow 0$.

Up to a subsequence we have three alternatives:
\begin{description}
\item[(a)] $\gamma_\lambda \rightarrow +\infty$,
\item[(b)] $\gamma_\lambda \rightarrow \gamma_0> 0$,
\item[(c)] $\gamma_\lambda \rightarrow 0$.
\end{description}
\begin{lem} \label{lemmacassazione}
$\gamma_\lambda \rightarrow +\infty$ cannot happen.
\end{lem}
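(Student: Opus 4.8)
The plan is to rule out $\gamma_\lambda=M_{\lambda,-}^\beta s_\lambda\to+\infty$ by deriving a contradiction with the energy normalization $\|u_\lambda^-\|_{B_1}^2\to S^{n/2}$ from Proposition \ref{PROP1}. The heuristic is that if $\gamma_\lambda\to+\infty$, then the rescaled negative part $\tilde u_\lambda^-$ lives on the annulus $A_{\rho_\lambda}$ whose inner radius $\rho_\lambda=M_{\lambda,-}^\beta r_\lambda$ is comparable to, or smaller than, $\gamma_\lambda$, and the profile $z_\lambda$ centered at the minimum point would spread its mass over a region receding to infinity; since a standard bubble $\delta_{0,\mu}$ carries a fixed amount of energy $S^{n/2}$ concentrated near its center, pushing the center to infinity on a thin annulus forces the $L^{2^*}$-mass (hence the Dirichlet energy along the Nehari manifold) to leak away, contradicting (ii) of Proposition \ref{PROP1}.

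First I would exploit the upper bound from Proposition \ref{lbnp}: for fixed small $\epsilon$ and $\lambda<\bar\lambda(\epsilon)$,
\begin{equation*}
u_\lambda^-(r)\leq M_{\lambda,-}\Bigl(1+\tfrac{M_{\lambda,-}^{-1}f(M_{\lambda,-})}{n(n-2)}c(\epsilon)r^2\Bigr)^{-(n-2)/2}
\end{equation*}
for $r\in(\delta^{-1/n}s_\lambda,1)$. Writing $|u_\lambda^-|_{2^*,B_1}^{2^*}=\omega_n\int_0^1 [u_\lambda^-(r)]^{2^*}r^{n-1}\,dr$ and splitting the integral at $\delta^{-1/n}s_\lambda$, I would estimate the inner piece crudely by $\omega_n M_{\lambda,-}^{2^*}\int_0^{\delta^{-1/n}s_\lambda}r^{n-1}\,dr=\frac{\omega_n}{n}\delta^{-1}(M_{\lambda,-}^\beta s_\lambda)^n=\frac{\omega_n}{n}\delta^{-1}\gamma_\lambda^n\cdot M_{\lambda,-}^{n\beta-2^*}$... — here I must be careful, since $n\beta=2^*$ this factor is $\gamma_\lambda^n$ times a bounded quantity, which would blow up, not vanish. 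So the inner piece must instead be controlled differently: one uses that near $r=s_\lambda$ the function is $\leq M_{\lambda,-}$ only on a genuinely small ball. The correct route is to change variables $r=s+s_\lambda$, $y=M_{\lambda,-}^\beta(r-s_\lambda)$ and work with $z_\lambda$, noting $a_\lambda=M_{\lambda,-}^\beta(r_\lambda-s_\lambda)$ and that $\rho_\lambda\leq\gamma_\lambda$. The outer piece, substituting $\tau=M_{\lambda,-}^\beta r$, becomes $\omega_n\int_{\gamma_\lambda/\delta^{1/n}}^{M_{\lambda,-}^\beta}\bigl(1+\frac{c(\epsilon)}{n(n-2)}\tau^2(1+\lambda M_{\lambda,-}^{-4/(n-2)})\bigr)^{-n}\tau^{n-1}\,d\tau$, and since the integrand is the tail of a convergent integral starting at $\gamma_\lambda/\delta^{1/n}\to+\infty$, this tends to $0$.

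The key remaining step is to show the \emph{total} mass goes to zero, i.e. that the portion of the energy not captured by the outer tail — the contribution from $|y|\lesssim\gamma_\lambda$ — also vanishes. For this I would use Strauss's radial lemma (Lemma \ref{strauss}) applied to $z_\lambda$ (or to $u_\lambda^-$ rescaled): since $\|z_\lambda\|_{1,2}$ is bounded (by Lemma \ref{ELLEM}-type scaling invariance and Proposition \ref{PROP1}), we get $z_\lambda(s)\leq c\,|s+\gamma_\lambda|^{-(n-1)/2}$, and on the region $|s|\le \eta\gamma_\lambda$ this is $\le c\gamma_\lambda^{-(n-1)/2}\to 0$ uniformly, contradicting $z_\lambda(0)=1$ directly. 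This is in fact the cleanest contradiction: $\gamma_\lambda\to+\infty$ together with the Strauss bound forces $1=z_\lambda(0)\le c\,\gamma_\lambda^{-(n-1)/2}\to 0$.

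The main obstacle I anticipate is making the Strauss bound legitimately applicable: $z_\lambda$ is defined on the interval $(a_\lambda,b_\lambda)$, not on all of $\R^n$, so one must first extend it (by zero past $b_\lambda$, which is fine since $u_\lambda^-(1)=0$, and handle the inner endpoint $a_\lambda$ where $z_\lambda$ meets the nodal sphere with $z_\lambda=0$) to a genuine $H^1_{rad}(\R^n)$ function whose $H^1$ norm is controlled by $\|u_\lambda^-\|_{1,2,B_1}$ uniformly — and to check that the rescaling $y\mapsto M_{\lambda,-}^\beta(\cdot-s_\lambda)$ interacts with the $H^1$ norm the right way, which requires tracking the exponent $n\beta=2^*$ exactly as in Lemma \ref{ELLEM}. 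Once the extension and scaling bookkeeping are in place, evaluating the Strauss inequality at the center gives the contradiction in one line.
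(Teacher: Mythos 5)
Your final argument hinges on one claim: that $\|z_\lambda\|_{1,2}$ (equivalently, the $H^1$-norm of the rescaled function $\tilde u_\lambda^-$ extended by zero) is bounded, "by Lemma \ref{ELLEM}-type scaling invariance and Proposition \ref{PROP1}". That is precisely where the proof breaks. Only the Dirichlet and $L^{2^*}$ norms are invariant under the rescaling; Lemma \ref{ELLEM}(iii) says exactly that the $L^2$ norm is not: $|\tilde u_\lambda^-|_{2}^2=M_{\lambda,-}^{2^*-2}\,|u_\lambda^-|_{2,A_{r_\lambda}}^2$, with $M_{\lambda,-}^{2^*-2}\to+\infty$. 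At this stage of the paper no bound on this quantity is available: from the Nehari identity one only gets $\lambda|u_\lambda^-|_{2}^2\to 0$, and the rate relation between $\lambda$ and $M_{\lambda,-}$ (Theorem \ref{asestpropmain}) — which would indeed make $M_{\lambda,-}^{2^*-2}|u_\lambda^-|_2^2$ bounded — is proved only later and uses Proposition \ref{rescpn}, whose proof needs $\bar M=\sup_\lambda\gamma_\lambda<\infty$, i.e.\ the very lemma you are proving. A second, smaller, defect is that Lemma \ref{strauss} applies to radial functions of $\R^n$ centered at the origin, so it cannot be applied to the shifted profile $z_\lambda$; you must apply it to $\tilde u_\lambda^-$ (zero-extended, which is legitimate since it vanishes on both boundary spheres) and evaluate at $|y|=\gamma_\lambda$, where $\tilde u_\lambda^-(\gamma_\lambda)=1$.

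The idea is salvageable, but it needs an actual estimate in place of the invariance claim: using Proposition \ref{lbnp} (available at this point) together with $\tilde u_\lambda^-\le 1$ one gets, for $n\ge 5$, $|\tilde u_\lambda^-|_{2}^2\le C\bigl(1+\gamma_\lambda^{\,n}\bigr)$, since the rescaled upper bound decays like $|y|^{-(n-2)}$ and its square is integrable at infinity; then the sharp form of the radial lemma, $|u(r)|^2 r^{n-1}\le C\,|u|_{2}\,|\nabla u|_{2}$, evaluated at $r=\gamma_\lambda$ gives $\gamma_\lambda^{\,n-1}\le C\gamma_\lambda^{\,n/2}$, which is the desired contradiction. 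As written, however, the proposal asserts the crucial norm bound rather than proving it, so there is a genuine gap. Note also that the paper proceeds quite differently: it distinguishes the cases $a_\lambda\to\bar a<0$ (or $-\infty$) and $a_\lambda\to 0$, obtaining in the first case the lower bound $|u_\lambda^-|_{2^*,A_{r_\lambda}}^{2^*}\ge \omega_n\gamma_\lambda^{\,n-1}c_2$ via the one-dimensional limit ODE and Fatou (contradicting Proposition \ref{PROP1}(ii)), and in the second case a contradiction between the mean value theorem plus concavity and the node derivative estimate of Lemma \ref{LEM1}; your route, once repaired, would avoid this case analysis, but the repair is not in your text.
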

\begin{proof}
Assume  $\gamma_\lambda \rightarrow +\infty$; 
up to a subsequence we have $a_\lambda \rightarrow \bar a\leq 0$, as $\lambda \rightarrow 0$, where $\bar a \in \R \cup \{-\infty\}$.

If $\bar a<0$ or $\bar a = - \infty$ then passing to the limit in (\ref{ORPBN}) as $\gamma_\lambda=M_{\lambda, -}^\beta \cdot s_\lambda \rightarrow +\infty$ we have that $z_\lambda \rightarrow z$ in $C_{loc}^1(\bar a,+\infty)$, where $z$ solves the limit problem
\begin{equation}\label{OLRPBN}
\begin{cases}
z^{\prime \prime} + z^{2^* -1} =0  & \hbox{in}\ \ (\bar a,+\infty)\\
z^\prime( 0)=0, \ \ z(0)=1.&
\end{cases}
\end{equation}
Since $z_\lambda \rightarrow z$ in $C_{loc}^1(\bar a,+\infty)$ and being $z_\lambda >0$, then  by Fatou's lemma we have
$$\liminf_{\lambda \rightarrow 0}\int_{a_\lambda}^{b_\lambda} [z_\lambda(s)]^{2^*} ds \geq  \int_{\bar a}^{+\infty} [z(s)]^{2^*} ds \geq c_1>0.$$
In particular, being $a_\lambda<0$, by the same argument it follows that for all small $\lambda>0$ $$\int_{0}^{b_\lambda} [z_\lambda(s)]^{2^*} ds \geq \int_{0}^{+\infty} [z(s)]^{2^*} ds \geq c_2 > 0.$$

Now we have the following estimate: 
\begin{eqnarray*}
\begin{array}{lllll}
\displaystyle |u_\lambda^- |_{2^*, A_{r_\lambda}}^{2^*}&=&\displaystyle \omega_n \int_{r_\lambda}^{1}[u_\lambda^-(r)]^{2^*} r^{n-1} dr
&\geq& \displaystyle \omega_n s_{\lambda}^{n-1} \int_{s_\lambda}^{1}[u_\lambda^-(r)]^{2^*} dr \\[10pt]
&=&\displaystyle \omega_n s_{\lambda}^{n-1} M_{\lambda,-}^{2^*} \int_{s_\lambda}^{1}\left[\frac{1}{M_{\lambda,-}}u_\lambda^-(r)\right]^{2^*} dr &=& \displaystyle \omega_n s_{\lambda}^{n-1} M_{\lambda,-}^{2^*-\beta} \int_{0}^{b_\lambda}[z_\lambda(s)]^{2^*}  ds\\[10pt]
&= &\displaystyle \omega_n \gamma_\lambda^{n-1}  \int_{0}^{b_\lambda}[z_\lambda(s)]^{2^*}  ds& \geq&\displaystyle \omega_n \gamma_\lambda^{n-1}  c_2,\\[10pt]
\end{array}
\end{eqnarray*}
having used the change of variable  $r=s_\lambda + \frac{s}{M_{\lambda,-}^\beta}$. Since $|u_\lambda^- |_{2^*, A_{r_\lambda}}^{2^*} \rightarrow S^{n/2}$ while $\gamma_\lambda \rightarrow + \infty$, as $\lambda \rightarrow 0$, we get a contradiction. 

If instead $\bar a=0$ we consider the rescaled function  $\tilde u_\lambda^-$ which solves 
\begin{equation}\label{TPBN}
\begin{cases}
-\Delta \tilde u_\lambda = \frac{\lambda}{M_{\lambda, -}^{2\beta}} \tilde u_\lambda + \tilde u_\lambda^{2^* -1} & \hbox{in}\ A_{\rho_\lambda}\\
\tilde u =0 & \hbox{on}\ \partial A_{\rho_\lambda},
\end{cases}
\end{equation}
  and is uniformly bounded. 
We observe that since $a_\lambda \rightarrow 0$ then $\rho_\lambda=a_\lambda + \gamma_\lambda \rightarrow + \infty$.
By definition we have $\tilde u_\lambda^-(\rho_\lambda) = 0$, $\tilde u_\lambda^-(\gamma_\lambda)=1 $,  for all $\lambda \in (0,\lambda_1)$.
Thus we have
 $$\frac{|\tilde u_\lambda^-(\rho_\lambda) -\tilde u_\lambda^-(\gamma_\lambda)|}{|\rho_\lambda - \gamma_\lambda|}=\frac{1}{|a_\lambda|}\rightarrow +\infty \ \ \hbox{as} \ \lambda\rightarrow 0. $$ 
 
From standard elliptic regularity theory we know that  $\tilde u_\lambda^-$ is a classical solution, so by the mean value theorem, $$\frac{|\tilde u_\lambda^-(\rho_\lambda) -\tilde u_\lambda^-(\gamma_\lambda)|}{|\rho_\lambda - \gamma_\lambda|}=|(\tilde u_\lambda^-)^\prime(\xi_\lambda)|,$$ for some $\xi_\lambda \in (\rho_\lambda, \gamma_\lambda)$; thus $|(\tilde u_\lambda^-)^\prime(\xi_\lambda)| \rightarrow +\infty$ as $\lambda \rightarrow 0$. From Corollary \ref{remutile} it follows that 
 $(\tilde u_\lambda^-)^\prime >0$ in $(\rho_\lambda,\gamma_\lambda)$ for all $\lambda>0$. 

 By writing (\ref{TPBN}) in polar coordinates we get: $$(\tilde u_\lambda^-)^{\prime \prime} +\frac{n-1}{r}(\tilde u_\lambda^-)^{\prime} + \frac{\lambda}{M_{\lambda,-}^{2\beta}} \tilde u_\lambda^- + (\tilde u_\lambda^- )^{2^* -1} =0 .$$
  From this, since $\tilde u_\lambda^- > 0$ and $(\tilde u_\lambda^-)^\prime >  0$ in $(\rho_\lambda,\gamma_\lambda)$, we get $(\tilde u_\lambda^-)^{\prime\prime} < 0$ in $(\rho_\lambda,\gamma_\lambda)$. Thus $(\tilde u_\lambda^-)^\prime(\rho_\lambda) > (\tilde u_\lambda^-)^\prime(\xi_\lambda)>0 $, for all $\lambda >0$. In particular $(\tilde u_\lambda^-)^\prime(\rho_\lambda) \rightarrow + \infty$ as $\lambda \rightarrow 0$.

Since, by elementary computation, we have $(\tilde u_\lambda^-)^\prime(\rho_\lambda)=\frac{1}{M_{\lambda,-}^{1+\beta}}(u_\lambda^-)^\prime(r_\lambda)$, by Lemma  \ref{LEM1} we get
 $$|(\tilde u_\lambda^-)^\prime(\rho_\lambda)| \leq c \frac{1}{M_{\lambda,-}^{1+\beta} \ r_\lambda^{n/2}} $$
for a constant c independent from $\lambda$. Remembering that $1+\beta= 1+\frac{2}{n-2}=\beta \cdot \frac{n}{2}$, and the definition of $\rho_\lambda$ we have the following estimate
$$|(\tilde u_\lambda^-)^\prime(\rho_\lambda)| \leq c \frac{1}{\rho_\lambda^{n/2}}.$$
Since  $\rho_\lambda \rightarrow + \infty$, as $\lambda \rightarrow 0$, we deduce that $(\tilde u_\lambda^-)^\prime(\rho_\lambda)$ is uniformly bounded, against $(\tilde u_\lambda^-)^\prime(\rho_\lambda) \rightarrow + \infty$ as $\lambda \rightarrow 0$. Thus we get a contradiction. 
\end{proof}
Thanks to Lemma \ref{lemmacassazione} we deduce that $(\gamma_\lambda)$ is a bounded sequence. The following proposition states an uniform upper bound for $\tilde u_\lambda^-$.

 \begin{prop}\label{rescpn}
 Let's fix $\epsilon \in (0,\frac{n-2}{2})$, and set $\bar M:=\sup_\lambda \gamma_\lambda$. There exist $h=h(\epsilon)$ and $\bar \lambda=\bar \lambda(\epsilon)>0$ such that
\begin{equation}\label{Geulbnp}
\tilde u_\lambda^-(y) \leq U_{h}(y)
\end{equation}
for all $y \in \R^n$, $0<\lambda<\bar\lambda$, where
\begin{equation}\label{uupperbound} 
U_{h}(y):=\begin{cases} 
1 & \hbox{if } \ |y|\leq h \\
 \left[1+\frac{1}{n (n-2)} c(\epsilon)|y|^2\right]^{-(n-2)/2} &\hbox{if }  \ |y|>h,
\end{cases}
\end{equation}
with $c(\epsilon)= \frac{2}{n-2} \epsilon$.
\end{prop}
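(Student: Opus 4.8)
The plan is to transfer the upper bound already established for $u_\lambda^-$ in Proposition~\ref{lbnp} to the rescaled function $\tilde u_\lambda^-$ by a change of variables, and to combine it with the trivial bound $\tilde u_\lambda^-\le 1$ near the minimum point. Recall that Proposition~\ref{lbnp} gives, for each fixed $\epsilon\in(0,\tfrac{n-2}{2})$, a number $\delta=\delta(\epsilon)\in(0,1)$ and $\overline\lambda(\epsilon)>0$ so that
\begin{equation*}
u_\lambda^-(r)\leq M_{\lambda,-}\left\{1+\frac{M_{\lambda,-}^{-1}f(M_{\lambda,-})}{n(n-2)}\,c(\epsilon)\,r^2\right\}^{-(n-2)/2}
\end{equation*}
for all $r\in(\delta^{-1/n}s_\lambda,\,1)$, with $c(\epsilon)=\tfrac{2}{n-2}\epsilon$. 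First I would perform the rescaling $r=\tfrac{|y|}{M_{\lambda,-}^\beta}$ and divide by $M_{\lambda,-}$, exactly as in the proof of Proposition~\ref{rescpp}. Since $M_{\lambda,-}^{-1}f(M_{\lambda,-})=\lambda M_{\lambda,-}^{-1}+M_{\lambda,-}^{4/(n-2)}$ and $M_{\lambda,-}^{2\beta}=M_{\lambda,-}^{4/(n-2)}$, the factor $M_{\lambda,-}^{-1}f(M_{\lambda,-})\,r^2$ becomes $\bigl(\lambda M_{\lambda,-}^{-2\beta}+1\bigr)|y|^2\ge|y|^2$, so the term involving $\lambda$ can be dropped and we obtain
\begin{equation*}
\tilde u_\lambda^-(y)\leq\left\{1+\frac{1}{n(n-2)}c(\epsilon)|y|^2\right\}^{-(n-2)/2}
\end{equation*}
for all $y$ with $|y|>M_{\lambda,-}^\beta\,\delta^{-1/n}s_\lambda=\delta^{-1/n}\gamma_\lambda$. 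This is precisely $U_h(y)$ in the region $|y|>h$ once we set $h:=\delta^{-1/n}\bar M$, where $\bar M=\sup_\lambda\gamma_\lambda<+\infty$ by Lemma~\ref{lemmacassazione}; note $h=h(\epsilon)$ depends on $\epsilon$ through $\delta(\epsilon)$ and on $\bar M$. Then I would take $\bar\lambda:=\overline\lambda(\epsilon)$.

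For the complementary region $|y|\le h$, I would simply use $\tilde u_\lambda^-(y)=\tfrac{1}{M_{\lambda,-}}u_\lambda^-(y/M_{\lambda,-}^\beta)\le\tfrac{1}{M_{\lambda,-}}M_{\lambda,-}=1$, since $M_{\lambda,-}=\|u_\lambda^-\|_\infty$. This matches the first branch of $U_h$. To see that the two branches fit together into a genuine pointwise bound on all of $\R^n$, observe that for $|y|\le h$ we only claim $\tilde u_\lambda^-\le 1$, while for $|y|>h$ the exponential-type bound is $\le 1$ anyway, so no consistency issue arises. Also $\tilde u_\lambda^-$ vanishes outside $A_{\rho_\lambda}$, hence the bound trivially extends to all of $\R^n$, exactly as in Proposition~\ref{rescpp}.

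The one point that needs a little care — and which I expect to be the main (though modest) obstacle — is the range of validity of the rescaled inequality near its inner endpoint: Proposition~\ref{lbnp} only gives the bound for $r>\delta^{-1/n}s_\lambda$, i.e.\ for $|y|>\delta^{-1/n}\gamma_\lambda$, and a priori $\delta^{-1/n}\gamma_\lambda$ could exceed $h=\delta^{-1/n}\bar M$ if $\gamma_\lambda$ were not controlled. This is where Lemma~\ref{lemmacassazione} is essential: it guarantees $(\gamma_\lambda)$ is bounded, so $\gamma_\lambda\le\bar M$ and hence $\delta^{-1/n}\gamma_\lambda\le\delta^{-1/n}\bar M=h$ for every $\lambda$; thus the region $\{|y|>h\}$ where we assert the decay bound is contained in the region $\{|y|>\delta^{-1/n}\gamma_\lambda\}$ where Proposition~\ref{lbnp} applies. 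Uniformity of $\bar\lambda$ and of $h$ in $\lambda$ then follows because both depend only on $\epsilon$ (through $\delta(\epsilon)$ and $\overline\lambda(\epsilon)$) and on the $\lambda$-independent quantity $\bar M$. Assembling these observations gives \eqref{Geulbnp} for all $y\in\R^n$ and all $0<\lambda<\bar\lambda$, completing the proof.
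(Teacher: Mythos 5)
Your proposal is correct and follows essentially the same route as the paper: rescale the bound of Proposition~\ref{lbnp} exactly as in Proposition~\ref{rescpp}, use the trivial bound $\tilde u_\lambda^-\le 1$ on $\{|y|\le h\}$ with $h=\delta^{-1/n}\bar M$ (which works uniformly in $\lambda$ precisely because Lemma~\ref{lemmacassazione} makes $(\gamma_\lambda)$ bounded), and extend by zero outside $A_{\rho_\lambda}$. The only blemish is the harmless typo $M_{\lambda,-}^{-1}f(M_{\lambda,-})=\lambda M_{\lambda,-}^{-1}+M_{\lambda,-}^{4/(n-2)}$, which should read $\lambda+M_{\lambda,-}^{4/(n-2)}$; your subsequent inequality $(\lambda M_{\lambda,-}^{-2\beta}+1)|y|^2\ge|y|^2$ is nevertheless the correct one.
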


\begin{proof}
 We fix $\epsilon\in (0,\frac{n-2}{2})$, so by Proposition \ref{lbnp} there exist $\delta=\delta(\epsilon) \in (0,1)$ and $\overline{\lambda}(\epsilon)>0$ such that 
 $$u_\lambda^-(x) \leq M_{\lambda,-} \left\{1+\frac{M_{\lambda,-}^{-1}\ f(M_{\lambda,-})}{n (n-2)} c(\epsilon)|x|^2\right\}^{-(n-2)/2}, $$
for all $x \in A_{\delta, \lambda}= \{ x \in \R^n; \ \delta^{- 1/N}s_\lambda <|x|<1\}$, for all $\lambda \in (0,\overline{\lambda})$, where $c(\epsilon)=\frac{2}{n-2}\epsilon$.
 The same proof of Proposition \ref{rescpp} shows that
 $$\tilde u_\lambda^-(y) \leq   \left\{1+\frac{1}{n (n-2)} c(\epsilon)|y|^2\right\}^{-(n-2)/2},$$
for all $y \in \tilde{A}_{\delta, \lambda}=\{ y \in \R^n; \ M_{\lambda,-}^\beta \delta^{- 1/N}s_\lambda <|y|<M_{\lambda,-}^\beta\}$. Now since by definition $\tilde u_\lambda^-$ is uniformly bounded by 1 we get an upper bound defined in the whole annulus $\tilde{A}_{\rho_\lambda}= \{ y \in \R^n; \ M_{\lambda,-}^\beta r_\lambda <|y|<M_{\lambda,-}^\beta\}$; to be more precise $\tilde u_\lambda^-(y) \leq U_{\lambda}(y)$, where
\begin{equation}\label{provv}
U_{\lambda}(y):=\begin{cases} 
1 & \hbox{if } \ M_{\lambda,-}^\beta r_\lambda <|y|\leq M_{\lambda,-}^\beta \delta^{- 1/N}s_\lambda \\
 \left[1+\frac{1}{n (n-2)} c(\epsilon)|y|^2\right]^{-(n-2)/2} &\hbox{if }  \ M_{\lambda,-}^\beta \delta^{- 1/N}s_\lambda <|y|<M_{\lambda,-}^\beta.
\end{cases}
\end{equation}
Since  $\gamma_\lambda =M_{\lambda,-}^\beta s_\lambda \leq \bar M$, then setting $h:=\delta^{- 1/N}\bar M$ we get that $ \delta^{- 1/N}M_{\lambda,-}^\beta s_\lambda \leq h$. Therefore,  from  (\ref{provv}), since $\tilde u_\lambda^-$ is zero outside $\tilde{A}_{\rho_\lambda}$, we deduce (\ref{Geulbnp}). 
 \end{proof}

\begin{lem}\label{minneo}
$\gamma_\lambda \rightarrow \gamma_0 >0$, $\gamma_0 \in\R$, cannot happen.
\end{lem}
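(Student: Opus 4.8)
The plan is to argue by contradiction. Suppose, along a subsequence, $\gamma_\lambda\to\gamma_0>0$. Since $\rho_\lambda=M_{\lambda,-}^\beta r_\lambda<\gamma_\lambda$, passing to a further subsequence we may assume $\rho_\lambda\to\rho_0\in[0,\gamma_0]$. The idea is to show that the rescaled negative part $\tilde u_\lambda^-$ converges to a limit $\bar u$ which, on one hand, carries the full energy $S^{n/2}$ in both the $L^{2^*}$ and the Dirichlet norm, hence realizes the best Sobolev constant and must therefore be a standard bubble $\delta_{0,\mu}$; and which, on the other hand, either vanishes on a ball (if $\rho_0>0$) or attains its maximum away from the origin, at $\gamma_0>0$ (if $\rho_0=0$). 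Both are impossible for $\delta_{0,\mu}$, which is everywhere positive and strictly radially decreasing.

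First I would set up the limit. The function $\tilde u_\lambda^-$ solves \eqref{TPBN} on $\tilde A_{\rho_\lambda}$ with $0<\tilde u_\lambda^-\le 1$ and $\tilde u_\lambda^-(\gamma_\lambda)=1$. By Proposition \ref{PROP1} we have $M_{\lambda,-}\to+\infty$, so the outer radius $M_{\lambda,-}^\beta\to+\infty$ and the coefficient $\lambda M_{\lambda,-}^{-2\beta}\to 0$; since $\|\tilde u_\lambda^-\|_\infty\le 1$, interior elliptic estimates give, up to a subsequence, $\tilde u_\lambda^-\to\bar u$ in $C^2_{loc}(\{|y|>\rho_0\})$, where $\bar u$ is radial, $0\le\bar u\le 1$, and solves $-\Delta\bar u=\bar u^{2^*-1}$ in $\{|y|>\rho_0\}$; moreover, if $\rho_0<\gamma_0$, then $\bar u(\gamma_0)=\lim_\lambda\tilde u_\lambda^-(\gamma_\lambda)=1$.

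The key step upgrades this to a statement about global energy. Fixing $\epsilon\in(0,\frac{n-2}{2})$, Proposition \ref{rescpn} gives $h=h(\epsilon)$ with $\tilde u_\lambda^-\le U_h$ on $\R^n$ for all small $\lambda$, and $U_h\in L^{2^*}(\R^n)$ because $U_h(y)\le c|y|^{-(n-2)}$ for large $|y|$ and $2^*(n-2)=2n>n$. Extending $\tilde u_\lambda^-$ by zero and combining this domination with the a.e. pointwise convergence $\tilde u_\lambda^-\to\bar u$, dominated convergence yields $\tilde u_\lambda^-\to\bar u$ strongly in $L^{2^*}(\R^n)$; here one only has to note in addition that the contribution of a small annulus around $\{|y|=\rho_0\}$ is $O(\eta)$, since there the integrand is $\le 1$. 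By Lemma \ref{ELLEM} and Remark \ref{santoro} applied to $u_\lambda^-$, together with Proposition \ref{PROP1}, $|\tilde u_\lambda^-|_{2^*,\R^n}^{2^*}=|u_\lambda^-|_{2^*,A_{r_\lambda}}^{2^*}\to S^{n/2}$ and $\int_{\R^n}|\nabla\tilde u_\lambda^-|^2=\|u_\lambda^-\|_{B_1}^2\to S^{n/2}$. Hence $|\bar u|_{2^*,\R^n}^{2^*}=S^{n/2}$ (so $\bar u\not\equiv 0$) and, by weak lower semicontinuity, $\int_{\R^n}|\nabla\bar u|^2\le S^{n/2}$. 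Plugging into the Sobolev inequality $S|\bar u|_{2^*,\R^n}^2\le\int_{\R^n}|\nabla\bar u|^2$ and using $S\cdot S^{(n-2)/2}=S^{n/2}$ forces equality; thus $\bar u$ realizes the best Sobolev constant and, being radial, equals some $\delta_{0,\mu}$ by the classification of the extremal functions (cf. Proposition \ref{UNLP}).

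Finally I would derive the contradiction. If $\rho_0>0$, then $\bar u\equiv 0$ on the ball $\{|y|<\rho_0\}$ of positive measure, which is impossible since $\delta_{0,\mu}>0$ everywhere. If $\rho_0=0$, then $\bar u=\delta_{0,\mu}$ is strictly radially decreasing with its only maximum at the origin, whereas $\bar u(\gamma_0)=1=\|\bar u\|_\infty$ with $\gamma_0>0$, again a contradiction. In both cases $\gamma_\lambda\to\gamma_0>0$ is excluded. I expect the only genuinely delicate point to be justifying the strong $L^{2^*}(\R^n)$-convergence uniformly up to the (moving) inner boundary and at infinity; once Proposition \ref{rescpn} is in hand this is routine, and the remainder is a direct comparison of energies through Lemma \ref{ELLEM} and the Sobolev inequality.
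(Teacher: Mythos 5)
Your argument is correct, but it follows a genuinely different route from the paper. The paper splits the analysis according to $a_\lambda=M_{\lambda,-}^\beta(r_\lambda-s_\lambda)$: if $a_\lambda\to0$ it recycles the derivative blow-up argument of Lemma \ref{lemmacassazione} (mean value theorem plus concavity against the bound of Lemma \ref{LEM1}); if the inner radius tends to some $m\in(0,\gamma_0)$ it shows the limit lies in $H_0^1(\Pi)$, $\Pi=\{|y|>m\}$, after a careful extension-by-zero to $\partial\Pi$, and attains the Sobolev constant on the proper domain $\Pi$, impossible by Struwe's Theorem III.1.2; if $m=0$ it runs an ODE analysis showing $\tilde u'(\delta)\sim\delta^{1-n}$ near the origin, contradicting finite energy. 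You instead work globally on $\R^n$: the uniform bound of Proposition \ref{rescpn} dominates the zero-extensions in $L^{2^*}(\R^n)$, so dominated convergence gives $|\bar u|_{2^*}^{2^*}=S^{n/2}$, lower semicontinuity gives $\|\nabla\bar u\|_2^2\le S^{n/2}$, equality in the Sobolev inequality follows, and the classification of extremals identifies $\bar u$ with a (positive, strictly radially decreasing) bubble, which is incompatible both with vanishing on $B_{\rho_0}$ ($\rho_0>0$, which also covers the paper's case $a_\lambda\to0$) and with an interior maximum at $|y|=\gamma_0$ ($\rho_0=0$). What each approach buys: yours is unified and shorter, dispensing with the boundary-extension step, the non-attainment theorem on proper domains, and the ODE singularity analysis; the paper's avoids invoking the classification of Sobolev extremals on $\R^n$, using only more elementary ingredients already in play. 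One small caveat: Proposition \ref{UNLP} as stated classifies $C^2(\R^n)$ finite-energy solutions of the equation on all of $\R^n$, not Sobolev extremals per se, so you should either cite Talenti's classification of the equality cases directly, or insert the standard intermediate step that your nonnegative extremal satisfies the Euler--Lagrange equation $-\Delta\bar u=\Lambda\bar u^{2^*-1}$ weakly in $\R^n$, is smooth by Brezis--Kato regularity and strictly positive by the strong maximum principle (which already kills the case $\rho_0>0$), and only then apply Proposition \ref{UNLP}; this is a matter of presentation, not a gap.
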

\begin{proof}
Assume that $\gamma_\lambda \rightarrow \gamma_0>0$, $\gamma_0\in \R$. Since $0< r_\lambda < s_\lambda$ there are only two possibilities for $a_\lambda$. To be precise, up to a subsequence we can have:
\begin{description}
\item[(i)] $a_\lambda \rightarrow 0$,
\item[(ii)] $a_\lambda \rightarrow \bar a < 0$, $\bar a\in\R$.
\end{description}
We will show that both (i) and (ii) lead to a contradiction.

If we assume (i) the same proof of Lemma \ref{lemmacassazione} gives a contradiction. We point out that now $\rho_\lambda \rightarrow \gamma_0$, as $\lambda \rightarrow 0$, so as before we get a contradiction since $(\tilde u_\lambda^-)^\prime(\rho_\lambda)$ is uniformly bounded, against $(\tilde u_\lambda^-)^\prime(\rho_\lambda) \rightarrow + \infty$ as $\lambda \rightarrow 0$.

Assuming (ii) we have $a_\lambda \rightarrow \bar a<0$ and $\gamma_\lambda \rightarrow \gamma_0>0$. We define $m:=\bar a+\gamma_0$. Clearly we have $0\leq m<\gamma_0$ and $\rho_\lambda \rightarrow m$ as $\lambda \rightarrow 0$. 
Assume $m>0$ and consider the rescaling $\tilde u_\lambda^-$ in the annulus $A_{\rho_\lambda}$ defined as before. Since $\tilde u_\lambda^-$ satisfies (\ref{TPBN}) and $(\tilde u_\lambda^-)$ is uniformly bounded
then passing to the limit as $\lambda \rightarrow 0$ we get $\tilde u_\lambda^- \rightarrow  \tilde u$ in $C_{loc}^2(\Pi)$, where $\Pi$ is the limit domain $\Pi:=\{y \in \R^n;|y|>m \}$ and $\tilde u$ is a positive radial solution of
 \begin{eqnarray}\label{TLPBN}
-\Delta  \tilde u =   \tilde u^{2^* -1} & \hbox{in}\ \Pi
\end{eqnarray}
By definition $\tilde u_\lambda^-(\gamma_\lambda)=1$, $(\tilde u_\lambda^-)^\prime(\gamma_\lambda)=0$ for all $\lambda$, so as $\lambda \rightarrow 0$ we get $\tilde u(\gamma_0)=1$, $\tilde u^\prime(\gamma_0)=0$ because of the convergence of $\tilde u_\lambda^- \rightarrow \tilde u$ in $C^2(K)$, for all compact subsets $K$ in $\Pi$, and $\gamma_0>m$. In particular we deduce that $\tilde u\not\equiv 0$.
We now show that $\tilde u$ can be extended to zero on $\partial \Pi=\{y \in\R^n; |y|=m\}$. Thanks to Lemma \ref{LEM1} and since we are assuming $m>0$, which is the limit of $\rho_\lambda$ as $\lambda \rightarrow 0$, we get that $(\tilde u_\lambda^-)^\prime (\rho_\lambda)$ is uniformly bounded by a constant $M$, and by the monotonicity of $(\tilde u_\lambda^-)^\prime$ the same bound holds for $(\tilde u_\lambda^-)^\prime (s)$ for all $s \in (\rho_\lambda,\gamma_\lambda)$. It follows that in that interval $\tilde u_\lambda^-(s) \leq M(s-\rho_\lambda)$. Passing to the limit as $\lambda \rightarrow 0$ we have $\tilde u (s) \leq M(s-m)$ for all $s \in (m,\gamma_0)$ which implies $\tilde u$ can be extended by continuity to zero on $\partial \Pi$. We use the same notation $\tilde u$ to denote this extension.

Observe that $\tilde u$ has finite energy, in particular, 
using Fatou's lemma and thanks to Lemma \ref{ELLEM}, Remark \ref{santoro}, Proposition \ref{PROP1}, we get
  \begin{equation}\label{gradesttu}
 \int_{\Pi}|\nabla  \tilde u|^2 dy \leq  \liminf_{\lambda \rightarrow 0} \int_{A_{\rho_\lambda}} |\nabla\tilde u_\lambda^-|^2 dy=\liminf_{\lambda \rightarrow 0} \int_{A_{r_\lambda}} |\nabla u_\lambda^-|^2 dx =S^{n/2},
 \end{equation}
 \begin{equation}\label{gradesttu2}
  \int_{\Pi}|  \tilde u|^{2*} dy \leq  \liminf_{\lambda \rightarrow 0} \int_{A_{\rho_\lambda}}|\tilde u_\lambda^-|^{2*}dy= \liminf_{\lambda \rightarrow 0} \int_{A_{r_\lambda}}| u_\lambda^-|^{2*}dx=S^{n/2}.
\end{equation}
Moreover, since $\tilde u_\lambda^- \rightarrow \tilde u$ in $C_{loc}^2(\Pi)$ and thanks to the uniform upper bound given by Proposition \ref{rescpn}, by Lebesgue's theorem we have 
 \begin{equation}\label{energialimite}
  \int_{\Pi}|  \tilde u|^{2*} dy = \lim_{\lambda \rightarrow 0} \int_{A_{r_\lambda}}| u_\lambda^-|^{2*}dx=S^{n/2}.
\end{equation}
Since $\tilde u \in H^1(\Pi)\cap C^0(\bar\Pi)$ and is zero on $\partial\Pi$, then $\tilde u \in H_0^{1}(\Pi)$ and thanks to (\ref{gradesttu}), (\ref{energialimite}) it follows that $\tilde u$ achieves the best constant in the Sobolev embedding on $\Pi$, which is impossible (see for instance \cite{STRUWE}, Theorem III.1.2). This ends the proof for the case $m>0$.

Assume now $m=0$, then $\tilde u_\lambda^-$ converges in $C^2_{loc}(\R^n-\{0\})$ to a radial function $\tilde u$ which is a positive bounded solution of 
 \begin{eqnarray}\label{NONSING}
-\Delta  \tilde u =   \tilde u^{2^* -1} & \hbox{in}\ \ \R^n-\{0\}
\end{eqnarray}

Since $\tilde u$ is a radial solution of (\ref{NONSING}), then integrating $-(\tilde u^\prime(r) r^{n-1})^\prime = \tilde u^{2^*-1}(r)r^{n-1} $ between $\delta>0$ sufficiently small and $\gamma_0$ we get 
$$\tilde u^\prime(\delta) \delta ^{n-1} = \int_{\delta}^{\gamma_0}\tilde u^{2^*-1}r^{n-1}dr. $$
Since the right hand side  is a positive and decreasing function of $\delta$, we get $\tilde u^\prime(\delta) \delta ^{n-1} \rightarrow \tilde l>0$ as $\delta \rightarrow 0$. Thus $\tilde u^\prime(\delta)$ behaves as $\delta^{1-n}$ near the origin and this is a contradiction since $\int_{\R^n}|  \nabla\tilde u|^{2} dy= \omega_n \int_{0}^{+ \infty} |\tilde u^\prime(r)|^2 r^{n-1} dr$ is finite, and the proof is complete.
\end{proof}
As a consequence of Lemma \ref{lemmacassazione} and Lemma \ref{minneo} we have proved:

\begin{prop}\label{asgamma}
Up to a subsequence we have $\gamma_\lambda \rightarrow 0$  
as $\lambda \rightarrow 0$.
\end{prop}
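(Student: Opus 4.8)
The plan is short, since the statement is essentially a bookkeeping consequence of the two preceding lemmas. Recall that, after passing to a subsequence, the sequence $(\gamma_\lambda)$ with $\gamma_\lambda = M_{\lambda,-}^\beta s_\lambda$ falls into exactly one of the three regimes listed above: (a) $\gamma_\lambda \to +\infty$; (b) $\gamma_\lambda \to \gamma_0$ for some $\gamma_0 \in (0,+\infty)$; or (c) $\gamma_\lambda \to 0$. This trichotomy is exhaustive because any sequence in $[0,+\infty)$ has a subsequence converging in the compactification $[0,+\infty]$, and the limit is then either $0$, a finite positive number, or $+\infty$. First I would invoke Lemma \ref{lemmacassazione} to discard alternative (a). Then I would invoke Lemma \ref{minneo} to discard alternative (b). The only surviving possibility is (c), i.e. $\gamma_\lambda \to 0$ along the chosen subsequence, which is precisely the assertion of Proposition \ref{asgamma}.

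A minor additional observation, if one wishes to strengthen the conclusion, is that the argument in fact yields convergence of the full family and not merely of a subsequence: given any sequence $\lambda_m \to 0$, every subsequence of $(\gamma_{\lambda_m})$ admits a further subsequence realizing one of (a), (b), (c), and by the two lemmas this further subsequence must realize (c); since every subsequence thus has a further subsequence converging to $0$, the whole sequence $(\gamma_{\lambda_m})$ converges to $0$. We keep the statement in the subsequential form to match the convention used for the analogous Proposition \ref{rodota}.

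There is no genuine obstacle in this step itself: all the work — the blow-up analysis near $s_\lambda$, the energy comparisons, the use of Fatou's lemma and of the uniform bound of Proposition \ref{rescpn}, and the non-existence of nontrivial solutions of the relevant limit problems (via Pohozaev's identity and the non-attainment of the Sobolev constant on proper subdomains of $\R^n$) — is already carried out in the proofs of Lemmas \ref{lemmacassazione} and \ref{minneo}. Once those are available, Proposition \ref{asgamma} follows immediately by elimination.
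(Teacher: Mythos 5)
Your proposal is correct and follows exactly the paper's argument: the proposition is stated there as an immediate consequence of Lemma \ref{lemmacassazione} and Lemma \ref{minneo}, eliminating alternatives (a) and (b) from the trichotomy and leaving $\gamma_\lambda \rightarrow 0$. Your extra remark on full-sequence convergence is a harmless strengthening not used in the paper.
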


\subsection{Final estimates and proof of Theorem \ref{mainteo}}
From Proposition \ref{asgamma} we know that, up to a subsequence, $\gamma_\lambda=M_{-,\lambda}^\beta s_\lambda \rightarrow 0$  as $\lambda \rightarrow 0$. 
The rescaled function $\tilde u_\lambda^- (y):=\frac{1}{M_{\lambda, -}}u_\lambda^-\left(\frac{y}{M_{\lambda, -}^\beta}\right)$  in the annulus $A_{\rho_\lambda}:=\{y \in \R^n; M_{\lambda, -}^\beta r_\lambda <|y|<M_{\lambda, -}^\beta\}$ solves (\ref{TPBN}) and the functions $(\tilde u_\lambda^-)$ are uniformly bounded. 
Since $\gamma_\lambda \rightarrow 0$  as $\lambda \rightarrow 0$, in particular 
the limit domain of $A_{\rho_\lambda}$ is $\R^n-\{0\}$ and  by standard elliptic theory
$\tilde u_\lambda^- \rightarrow \tilde u$ in $C^2_{loc}(\R^n-\{0\})$, where $\tilde u$ is positive, radial and solves 
\begin{eqnarray}\label{WSLPBNf}
-\Delta  \tilde u =   \tilde u^{2^* -1} & \hbox{in}\ \R^n-\{0\}
\end{eqnarray} 
As in the proof of Lemma \ref{minneo} by Fatou's Lemma it follows that $\tilde u$ has finite energy $I_0(\tilde u)=\frac{1}{2}|\nabla\tilde u|_{2,\R^n}^2-\frac{1}{2^*}|\tilde u|_{2^*,\R^n}^{2^*}$. 
Moreover, thanks to the uniform upper bound (\ref{Geulbnp}), by Lebesgue's theorem we have 
$$\lim_{\lambda \rightarrow 0} \int_{A_{\rho_\lambda}}|\tilde u_\lambda^-|^{2*}dy =\int_{\R^n}|  \tilde u|^{2*} dy,$$
so, by Lemma \ref{ELLEM}, Remark \ref{santoro} and Proposition \ref{PROP1} we get  
$$\int_{\R^n}| \tilde u|^{2*} dy = S^{n/2}.$$
The next two lemmas show that the function $\tilde u=\tilde u(s)$ can be extended to a $C^1([0,+\infty))$ function if we set $\tilde u(0):=1$ and $\tilde u^\prime(0):=0$.

\begin{lem}\label{lemdel1}
We have $$\lim_{s\rightarrow 0} \tilde u (s)=1.$$
\end{lem}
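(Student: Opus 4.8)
The plan is to show that the limit $\tilde u(s):=\lim_{\lambda\to 0}\tilde u_\lambda^-(s)$ satisfies $\tilde u(s)\to 1$ as $s\to 0^+$, exploiting that $\tilde u$ solves the singular ODE $-(\tilde u' s^{n-1})'=\tilde u^{2^*-1}s^{n-1}$ on $(0,+\infty)$, is positive and monotone decreasing (since $(u_\lambda^-)'\le 0$ for $r>s_\lambda$, which passes to the limit via $C^1_{loc}$ convergence), is bounded above by $1$, and has finite energy $\int_{\R^n}|\nabla\tilde u|^2<\infty$ and $\int_{\R^n}|\tilde u|^{2^*}=S^{n/2}$. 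Because $\tilde u$ is decreasing and bounded, the limit $\ell:=\lim_{s\to 0^+}\tilde u(s)\in[1,+\infty]$ exists (it is at least $1$ because $\tilde u_\lambda^-(\gamma_\lambda)=1$ with $\gamma_\lambda\to 0$, and by monotonicity $\tilde u_\lambda^-\ge 1$ on $(\rho_\lambda,\gamma_\lambda)$, so $\tilde u\ge 1$ on an interval shrinking to $0$; more simply, since $\gamma_\lambda\to 0$, for every fixed $s>0$ eventually $s>\gamma_\lambda$ hence $\tilde u_\lambda^-(s)\le 1$, giving $\tilde u\le 1$ everywhere, while $\ell\ge 1$ follows as above). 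The only thing to rule out is $\ell>1$, which by the upper bound $\tilde u\le 1$ is actually impossible unless we reconsider — so in fact it suffices to prove $\ell\ge 1$, equivalently that $\tilde u$ does not decay to a value strictly below $1$ near $0$. Wait: since $\tilde u\le 1$ globally and $\tilde u$ is decreasing, $\ell=\lim_{s\to0^+}\tilde u(s)=\sup_{s>0}\tilde u(s)\le 1$; the content of the lemma is that $\ell=1$, i.e. $\ell$ is not $<1$.

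To get $\ell\ge 1$, I would argue directly with the rescaled functions rather than only with the limit. Fix $s>0$ small. For $\lambda$ small enough $\gamma_\lambda<s$, and by the monotonicity of $\tilde u_\lambda^-$ on the relevant range together with $\tilde u_\lambda^-(\gamma_\lambda)=1$ and $(\tilde u_\lambda^-)'(\gamma_\lambda)=0$, I want a lower bound $\tilde u_\lambda^-(s)\ge 1-\omega(s)$ with $\omega(s)\to 0$ as $s\to 0$, uniformly in small $\lambda$. This comes from integrating the ODE twice starting at $\gamma_\lambda$: from $-(\,(\tilde u_\lambda^-)'\,r^{n-1})'=\big(\tfrac{\lambda}{M_{\lambda,-}^{2\beta}}\tilde u_\lambda^-+(\tilde u_\lambda^-)^{2^*-1}\big)r^{n-1}$ and the initial data at $\gamma_\lambda$, one gets for $\gamma_\lambda\le r$
$$-(\tilde u_\lambda^-)'(r)\,r^{n-1}=\int_{\gamma_\lambda}^{r}\Big(\tfrac{\lambda}{M_{\lambda,-}^{2\beta}}\tilde u_\lambda^-+(\tilde u_\lambda^-)^{2^*-1}\Big)\tau^{n-1}\,d\tau\le C\,r^{n},$$
using $\tilde u_\lambda^-\le 1$ and $\lambda/M_{\lambda,-}^{2\beta}\to 0$ (bounded). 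Hence $-(\tilde u_\lambda^-)'(r)\le C r$, and integrating from $\gamma_\lambda$ to $s$ gives $1-\tilde u_\lambda^-(s)\le \tfrac{C}{2}(s^2-\gamma_\lambda^2)\le \tfrac{C}{2}s^2$. Letting $\lambda\to 0$ yields $\tilde u(s)\ge 1-\tfrac{C}{2}s^2$ for all small $s>0$, and combined with $\tilde u(s)\le 1$ this forces $\lim_{s\to 0^+}\tilde u(s)=1$.

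The main obstacle is making the double-integration estimate clean, in particular controlling the lower endpoint $\gamma_\lambda$ of the integrals (which is moving to $0$) and confirming that $(\tilde u_\lambda^-)'(\gamma_\lambda)=0$, $\tilde u_\lambda^-(\gamma_\lambda)=1$ are the right data — these follow because $\gamma_\lambda=M_{\lambda,-}^\beta s_\lambda$ rescales the global minimum point $s_\lambda$ of $u_\lambda$, where $u_\lambda^-$ attains its maximum $M_{\lambda,-}$ with vanishing derivative, so in rescaled variables the value is $1$ and the derivative is $0$. One should also note the bound is valid for $r$ in the true domain, i.e. up to $M_{\lambda,-}^\beta$, but for fixed small $s$ this holds for all small $\lambda$ since the domain exhausts $\R^n\setminus\{0\}$. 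Everything else is elementary, and the $C^2_{loc}(\R^n\setminus\{0\})$ convergence (already established) legitimizes passing to the limit in the pointwise inequality.
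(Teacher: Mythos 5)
Your proposal is correct and follows essentially the same route as the paper: integrate the rescaled radial equation from $\gamma_\lambda$ using $(\tilde u_\lambda^-)'(\gamma_\lambda)=0$, $\tilde u_\lambda^-(\gamma_\lambda)=1$, bound the right-hand side via $\tilde u_\lambda^-\leq 1$ and the boundedness of $\lambda/M_{\lambda,-}^{2\beta}$ to get $-(\tilde u_\lambda^-)'(s)\leq Cs$, integrate again to obtain $\tilde u_\lambda^-(s)\geq 1-\tfrac{C}{2}s^2$, and pass to the limit, concluding with $\tilde u\leq 1$. The only blemish is the stray remark in your opening paragraph that $\tilde u_\lambda^-\geq 1$ on $(\rho_\lambda,\gamma_\lambda)$ (there it increases from $0$ to $1$, so the inequality goes the other way), but you discard that line of reasoning yourself and it plays no role in the actual argument.
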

\begin{proof}
Since $\tilde u_\lambda^-$ is a radial solution of (\ref{TPBN}) and $\tilde u_\lambda^-\leq 1$, then 
\begin{equation*}
\begin{array}{lll}
[(\tilde u_\lambda^-)^\prime s^{n-1}]^\prime &=&-\frac{\lambda}{M_{\lambda,-}^{2\beta}}\tilde u_\lambda^-(s) s^{n-1}-[\tilde u_\lambda^-(s)]^{2^*-1}s^{n-1}\\[8pt]
&\geq&-\frac{\lambda}{M_{\lambda,-}^{2\beta}} s^{n-1}-s^{n-1}\\[8pt]
&\geq&-2s^{n-1}.
\end{array}
\end{equation*}
Integrating between $\gamma_\lambda$ and $s>\gamma_\lambda$ (with $s<M_{\lambda,-}^{\beta}$) we get 
\begin{equation*}
(\tilde u_\lambda^-)^\prime(s) s^{n-1} \geq\displaystyle -2\int_{\gamma_\lambda}^{s}t^{n-1} dt
\geq-\frac{2}{n} s^{n}.
\end{equation*}
Hence $(\tilde u_\lambda^-)^\prime(s) \geq -\frac{2}{n} s$ for all  $s \in (\gamma_\lambda, M_{\lambda,-}^{\beta})$. Integrating again between $\gamma_\lambda$ and $s$ we have
\begin{equation*}
\tilde u_\lambda^-(s)-1 \geq -\frac{1}{n} (s^{2}-\gamma_\lambda^2)\geq-\frac{1}{n} s^{2}.
\end{equation*}
Hence $\tilde u_\lambda^-(s) \geq 1-\frac{1}{n} s^{n}$ for all $s \in (\gamma_\lambda, M_{\lambda,-}^{\beta})$. Since $\gamma_\lambda \rightarrow 0$ and $M_{\lambda,-}^{\beta} \rightarrow +\infty$, then, passing to the limit as $\lambda \rightarrow 0$, we get  $\tilde u(s) \geq 1-\frac{1}{n} s^{2}$, for all $s>0$. From this inequality and since $\tilde u \leq 1$ we deduce $\lim_{s\rightarrow 0} \tilde u(s)=1$.
\end{proof}

\begin{lem}\label{lemdel2}
We have 
$$\lim_{s \rightarrow 0} \tilde u^\prime (s)=0.$$
\end{lem}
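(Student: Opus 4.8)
The plan is to show that $\tilde u'$ extends continuously to $0$ with value $0$, exploiting the radial ODE satisfied by $\tilde u$ together with the already established facts that $\tilde u$ is bounded (by $1$), has finite energy, and satisfies $\lim_{s\to 0}\tilde u(s)=1$ (Lemma \ref{lemdel1}). The starting point is the integrated form of the equation: since $\tilde u$ solves $-(\tilde u'(r)r^{n-1})' = \tilde u^{2^*-1}(r)r^{n-1}$ on $(0,+\infty)$, integrating between a small $\delta>0$ and a fixed $r_0>0$ gives
\begin{equation*}
\tilde u'(\delta)\delta^{n-1} = \tilde u'(r_0)r_0^{n-1} + \int_\delta^{r_0}\tilde u^{2^*-1}(r)r^{n-1}\,dr.
\end{equation*}
The right-hand side converges, as $\delta\to 0$, to a finite limit $\ell := \tilde u'(r_0)r_0^{n-1} + \int_0^{r_0}\tilde u^{2^*-1}r^{n-1}\,dr$, because the integrand is bounded (using $\tilde u\le 1$ and $n\geq 7$, so $\int_0^{r_0} r^{n-1}\,dr<\infty$). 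Hence $\tilde u'(\delta)\delta^{n-1}\to\ell$.

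The crucial step is to prove $\ell=0$. If $\ell\neq 0$, then $\tilde u'(\delta)$ behaves like $\ell\,\delta^{1-n}$ near the origin, and $\int_0^{r_0}|\tilde u'(r)|^2 r^{n-1}\,dr$ would behave like $\int_0^{r_0} r^{2-n}\,dr = +\infty$ for $n\geq 3$. This contradicts the finiteness of the energy $I_0(\tilde u)$, precisely $|\nabla\tilde u|_{2,\R^n}^2=\omega_n\int_0^{+\infty}|\tilde u'(r)|^2 r^{n-1}\,dr<\infty$ — exactly the argument already used at the end of the proof of Lemma \ref{minneo}. Therefore $\ell=0$, i.e. $\tilde u'(\delta)\delta^{n-1}=\int_0^\delta\tilde u^{2^*-1}(r)r^{n-1}\,dr$ for all small $\delta>0$.

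Finally, from this identity we estimate, using $0<\tilde u\le 1$,
\begin{equation*}
|\tilde u'(\delta)| = \delta^{1-n}\int_0^\delta \tilde u^{2^*-1}(r)r^{n-1}\,dr \leq \delta^{1-n}\int_0^\delta r^{n-1}\,dr = \frac{\delta}{n},
\end{equation*}
so $|\tilde u'(\delta)|\le \delta/n\to 0$ as $\delta\to 0$, which gives $\lim_{s\to 0}\tilde u'(s)=0$. Combined with Lemma \ref{lemdel1}, this shows $\tilde u$ extends to a $C^1([0,+\infty))$ function with $\tilde u(0)=1$, $\tilde u'(0)=0$.

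The main obstacle is justifying that $\ell=0$ rather than merely finite: one must invoke the finite-energy property, which itself rests on the Fatou-type bound \eqref{gradesttu} established earlier, and be careful that the blow-up rate $\delta^{1-n}$ of $\tilde u'$ is genuinely incompatible with $L^2$-integrability of $\nabla\tilde u$ against the weight $r^{n-1}$ — this works for every $n\ge 3$ and in particular for the relevant range $n\ge 7$. Once $\ell=0$ is secured, the remaining estimate is elementary.
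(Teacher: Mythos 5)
Your argument is correct, but it takes a different route from the paper. The paper never works with the limit function alone: it integrates the radial equation satisfied by the rescaled functions $\tilde u_\lambda^-$ starting from $\gamma_\lambda$, where $(\tilde u_\lambda^-)'(\gamma_\lambda)=0$, so no boundary term appears and one gets the uniform bound $|(\tilde u_\lambda^-)'(s)\,s^{n-1}|\le \tfrac{2}{n}s^{n}$ on $(\gamma_\lambda, M_{\lambda,-}^{\beta})$; since $\gamma_\lambda\to 0$, passing to the limit gives $|\tilde u'(s)|\le \tfrac{2}{n}s$ directly, with no need of the finite-energy property. You instead work only with the limit equation on $(0,+\infty)$, where no point with vanishing derivative is available, and you compensate by showing that the limit $\ell=\lim_{\delta\to 0}\tilde u'(\delta)\delta^{n-1}$ exists and must vanish, because $\ell\neq 0$ would force $|\tilde u'(r)|^2r^{n-1}\sim \ell^2 r^{1-n}$ near the origin, contradicting $\int_{\R^n}|\nabla\tilde u|^2\,dy<\infty$ (the Fatou bound of type (\ref{gradesttu})); this is exactly the mechanism the paper uses at the end of Lemma \ref{minneo}, here recycled to prove $\ell=0$ rather than to reach a contradiction. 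Your approach buys a proof that is intrinsic to the limit problem (any bounded radial finite-energy solution of $-\Delta u=u^{2^*-1}$ in $\R^n\setminus\{0\}$ has $u'(s)\to 0$), while the paper's approach avoids invoking finite energy and yields the bound uniformly in $\lambda$, which is in the same spirit as Lemma \ref{lemdel1}. Two cosmetic slips: the correct identity after $\ell=0$ is $\tilde u'(\delta)\delta^{n-1}=-\int_0^\delta \tilde u^{2^*-1}r^{n-1}\,dr$ (your sign is off, harmless since you then take absolute values), and the divergent comparison integral should be $\int_0^{r_0}r^{1-n}\,dr$ rather than $\int_0^{r_0}r^{2-n}\,dr$; both still diverge in the relevant range, so the conclusion is unaffected.
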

\begin{proof}
As before, from the radial equation satisfied by $\tilde u_\lambda^-$, integrating between $\gamma_\lambda$ and $s>\gamma_\lambda$ (with $s<M_{\lambda,-}^{\beta}$) we get 
\begin{equation*}
\begin{array}{lll}
-(\tilde u_\lambda^-)^\prime(s) s^{n-1} &=&\displaystyle \frac{\lambda}{M_{\lambda,-}^{2\beta}} \int_{\gamma_\lambda}^{s}\tilde u_\lambda^-t^{n-1} dt + \int_{\gamma_\lambda}^{s} (\tilde u_\lambda^-)^{2^*-1}t^{n-1} dt .
\end{array}
\end{equation*}
Since $\tilde u \leq 1$, and $\gamma_\lambda \rightarrow 0$ it follows that for all $\lambda>0$ sufficiently small 
\begin{equation*}
|(\tilde u_\lambda^-)^\prime(s) s^{n-1}| \leq \displaystyle \frac{\lambda}{M_{\lambda,-}^{2\beta}} \int_{\gamma_\lambda}^{s} t^{n-1} dt + \int_{\gamma_\lambda}^{s}t^{n-1} dt \leq \displaystyle 2 \frac{s^n}{n}.
\end{equation*}
Passing to the limit, as $\lambda \rightarrow 0$, we get  $\displaystyle |\tilde u^\prime(s)|  \leq 2 \frac{s}{n}$ for all $s>0$, hence $\lim_{s\rightarrow 0}\tilde u^\prime(s)=0$.
\end{proof}

From Lemma \ref{lemdel1} and Lemma \ref{lemdel2} it follows that the radial function $\tilde u(y)=\tilde u(|y|)$ can be extended to a $C^1(\R^n)$ function. From now on we denote by $\tilde u$ this extension. Next lemma shows that $\tilde u$ is a weak solution of (\ref{WSLPBNf}) in the whole $\R^n$.
\begin{lem}\label{fedistrsol}
The function $\tilde u$ is a weak solution of 
\begin{eqnarray}\label{festproblim}
-\Delta  \tilde u =   \tilde u^{2^* -1} & \hbox{in}\ \R^n
\end{eqnarray}
\end{lem}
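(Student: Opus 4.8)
The plan is a standard removable--singularity argument at the origin. Everything needed is already in place: from the preceding discussion $\tilde u$ is positive, radial, of class $C^1(\R^n)$ with $\tilde u(0)=1$ and $\tilde u^\prime(0)=0$, it solves $-\Delta \tilde u=\tilde u^{2^*-1}$ classically (hence smoothly) in $\R^n-\{0\}$, and it has finite energy, so that $\nabla\tilde u\in L^2(\R^n)$ and $\tilde u\in L^{2^*}(\R^n)$; in particular $\tilde u\in H^1_{loc}(\R^n)$ and the weak formulation of (\ref{festproblim}) makes sense. Thus it only remains to verify the test-function identity against functions that do not vanish near $0$.

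First I would fix $\varphi\in C_c^\infty(\R^n)$ and, for small $\varepsilon>0$, choose a radial cut-off $\eta_\varepsilon\in C_c^\infty(\R^n)$ with $\eta_\varepsilon\equiv 0$ on $B_\varepsilon$, $\eta_\varepsilon\equiv 1$ on $\R^n-B_{2\varepsilon}$, $0\le\eta_\varepsilon\le 1$ and $|\nabla\eta_\varepsilon|\le C/\varepsilon$. Since $\varphi\eta_\varepsilon\in C_c^\infty(\R^n-\{0\})$ and $\tilde u$ is a classical solution there, integration by parts gives
$$\int_{\R^n}\eta_\varepsilon\,\nabla\tilde u\cdot\nabla\varphi\,dx+\int_{\R^n}\varphi\,\nabla\tilde u\cdot\nabla\eta_\varepsilon\,dx=\int_{\R^n}\tilde u^{2^*-1}\varphi\,\eta_\varepsilon\,dx.$$
Then I would let $\varepsilon\to 0$. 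As $\eta_\varepsilon\to 1$ pointwise with $0\le\eta_\varepsilon\le1$, and $\nabla\tilde u\cdot\nabla\varphi\in L^1(\R^n)$ (because $\nabla\tilde u\in L^2$ and $\nabla\varphi$ is bounded with compact support) while $\tilde u^{2^*-1}\varphi\in L^1(\R^n)$ (because $\tilde u^{2^*-1}\in L^{2n/(n+2)}$), dominated convergence shows the first and third integrals converge to $\int_{\R^n}\nabla\tilde u\cdot\nabla\varphi\,dx$ and $\int_{\R^n}\tilde u^{2^*-1}\varphi\,dx$. For the middle term, since the support of $\nabla\eta_\varepsilon$ lies in $B_{2\varepsilon}-B_\varepsilon$, Cauchy--Schwarz yields
$$\left|\int_{\R^n}\varphi\,\nabla\tilde u\cdot\nabla\eta_\varepsilon\,dx\right|\le \frac{C}{\varepsilon}\,\|\varphi\|_\infty\int_{B_{2\varepsilon}}|\nabla\tilde u|\,dx\le \frac{C}{\varepsilon}\,\|\varphi\|_\infty\,|B_{2\varepsilon}|^{1/2}\,\|\nabla\tilde u\|_{L^2(B_{2\varepsilon})}\le C^\prime\,\varepsilon^{\frac n2-1}\,\|\nabla\tilde u\|_{L^2(B_{2\varepsilon})},$$
which tends to $0$ as $\varepsilon\to0$ since $n\ge 7$ (indeed $n\ge 3$ suffices, and $\|\nabla\tilde u\|_{L^2(B_{2\varepsilon})}\to0$ as well). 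Hence $\int_{\R^n}\nabla\tilde u\cdot\nabla\varphi\,dx=\int_{\R^n}\tilde u^{2^*-1}\varphi\,dx$ for every $\varphi\in C_c^\infty(\R^n)$, i.e. $\tilde u$ is a weak solution of (\ref{festproblim}) in all of $\R^n$.

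There is no genuine obstacle here: the only point requiring care is the cut-off error $\int_{\R^n}\varphi\,\nabla\tilde u\cdot\nabla\eta_\varepsilon\,dx$, which is controlled by the global bound $\nabla\tilde u\in L^2(\R^n)$ and the dimensional restriction $n\ge 3$ — equivalently, a single point is a removable singularity for $H^1$ distributional solutions because it has zero $H^1$-capacity. Alternatively, one could bypass the capacity estimate entirely by invoking the already-established $C^1(\R^n)$ regularity of $\tilde u$, which makes $\nabla\tilde u$ bounded near $0$ and renders the error term $O(\varepsilon^{n-1})$ directly.
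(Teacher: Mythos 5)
Your argument is correct, and it reaches the conclusion by a route that differs in its key estimate from the paper's. The paper applies Green's formula directly to $\tilde u$ and the test function $\phi$ on the annulus $\Omega(\delta)=B(1/\delta)-B(\delta)$, so the whole issue is a single boundary term $\int_{\partial B(\delta)}\phi\,\partial_\nu\tilde u\,d\sigma$, which is killed by the radial derivative estimate of Lemma \ref{lemdel2} (indeed $|\tilde u^\prime(\delta)|\delta^{n-1}\le \tfrac{2}{n}\delta^{n}\to 0$); the remaining terms are handled by dominated convergence exactly as you do. You instead multiply the test function by a cut-off vanishing near the origin and control the resulting bulk error by Cauchy--Schwarz and the global bound $\nabla\tilde u\in L^2(\R^n)$, i.e.\ by the zero $H^1$-capacity of a point. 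What your primary argument buys is robustness: it uses neither the radial symmetry of $\tilde u$ nor the pointwise derivative information from Lemma \ref{lemdel2}, only $\nabla\tilde u\in L^2$, $\tilde u^{2^*-1}\in L^{2n/(n+2)}$ and $n\ge 3$; the paper's argument is more tailored but elementary, relying on the explicit radial ODE estimates it has already proved (your second alternative, via the $C^1(\R^n)$ extension, is essentially equivalent to the paper's route, since that extension rests on Lemmas \ref{lemdel1}--\ref{lemdel2}). One cosmetic slip: as described, $\eta_\varepsilon$ cannot lie in $C_c^\infty(\R^n)$ since it equals $1$ outside $B_{2\varepsilon}$; take $\eta_\varepsilon\in C^\infty(\R^n)$ and note that it is the product $\varphi\,\eta_\varepsilon$ that is smooth, compactly supported and supported away from the origin, which is all the integration by parts requires.
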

\begin{proof}
Let's fix a test function $\phi \in C_0^\infty (\R^n)$. If $0 \notin supp(\phi)$ the proof is trivial so from now on we assume $0 \in  supp(\phi)$. Let $B(\delta)$ be the ball  centered at the origin having radius $\delta>0$, with $\delta$ sufficiently small such that $supp (\phi)\subset\subset B(1/\delta)$. 
Applying Green's formula to $\Omega(\delta):=B(1/\delta)-B(\delta)$, since $\tilde u$ is a $C_{loc}^2(\R^n-\{0\})$ solution of (\ref{WSLPBNf}) and  $\phi\equiv 0$ on $\partial B(1/\delta)$, we have

\begin{equation}\label{eq1fest}
 \int_{\Omega(\delta)} \nabla \tilde u \cdot \nabla \phi \ dy =\int_{\Omega(\delta)} \phi\  \tilde u^{2^*-1} \ dy + \int_{\partial B(\delta)} \phi \left(\frac{\partial \tilde u}{\partial \nu}\right)\ d\sigma .
 \end{equation}
  We show now that $\int_{\partial B(\delta)} \phi \left(\frac{\partial \tilde u}{\partial \nu}\right)\ d\sigma \rightarrow 0$ as $\delta \rightarrow 0$. In fact since $\tilde u$ is a radial function we have $\frac{\partial \tilde u}{\partial \nu}(y)= \tilde u^\prime (\delta) $ for all $y \in \partial B(\delta)$, and from this relation we get
\begin{eqnarray*}
\left| \int_{\partial B(\delta)} \phi \left(\frac{\partial \tilde u}{\partial \nu}\right)\ d\sigma\right|
&\leq&| \tilde u^\prime (\delta)| \int_{\partial B(\delta)} |\phi| \ d\sigma\\
&\leq& \omega_n| \tilde u^\prime (\delta)| \delta^{n-1} ||\phi||_{\infty}.
\end{eqnarray*}
Thanks to Lemma \ref{lemdel2} we have $| \tilde u^\prime (\delta)| \delta^{n-1}\rightarrow 0$ as $\delta \rightarrow 0$.
To complete the proof we pass to the limit in (\ref{eq1fest}) as $\delta \rightarrow 0$. We observe that 
\begin{equation}\label{eq1wsol}
\begin{array}{lll}
\displaystyle |\nabla \tilde u \cdot \nabla \phi| \ \chi_{\Omega(\delta)} 
 &\leq&\displaystyle | \nabla \tilde u|^2 \ \chi_{\{| \nabla \tilde u|>1\}}|\nabla \phi| + | \nabla \tilde u| \ \chi_{\{| \nabla \tilde u|\leq1\}}|\nabla \phi|\\[10pt]
 &\leq&\displaystyle  | \nabla \tilde u|^2 \ \chi_{\{| \nabla \tilde u|>1\}}|\nabla \phi| +\  \chi_{\{| \nabla \tilde u|\leq1\}}|\nabla \phi|.
\end{array}
\end{equation}
Since $\int_{\R^n}|  \nabla\tilde u|^{2} dy \leq  S^{n/2}$ and $\phi$ has compact support the right-hand side of (\ref{eq1wsol}) belongs to $L^1(\R^n)$.
Hence from Lebesgue's theorem we have
\begin{equation}\label{eq2fest}
 \lim_{\delta \rightarrow 0} \int_{\Omega(\delta)} \nabla \tilde u \cdot \nabla \phi \ dy = \int_{\R^n} \nabla \tilde u \cdot \nabla \phi \ dy.
\end{equation}
Since $\phi$ has compact support by Lebesgue's theorem we have 
\begin{equation}\label{eq3fest}
\lim_{\delta \rightarrow 0}\int_{\Omega(\delta)} \phi\  \tilde u^{2^*-1} \ dy = \int_{\R^n} \phi\  \tilde u^{2^*-1} \ dy.
\end{equation}
From  (\ref{eq1fest}), (\ref{eq2fest}), (\ref{eq3fest}) and since we have proved $\int_{\partial B(\delta)} \phi \left(\frac{\partial \tilde u}{\partial \nu}\right)\ d\sigma \rightarrow 0$ as $\delta \rightarrow 0$ it follows that
\begin{equation*}
 \int_{\R^n} \nabla \tilde u \cdot \nabla \phi \ dy =\int_{\R^n} \phi\  \tilde u^{2^*-1} \ dy,
 \end{equation*}
 which completes the proof.
\end{proof}

Now we have all the tools to prove Theorem \ref{mainteo}.

\begin{proof}[Proof of Theorem \ref{mainteo}]
We start proving (i). By Proposition \ref{rodota}, arguing as in the previous proofs we know that $(\tilde u_\lambda^+)$ is an equi-bounded family of radial solutions  of (\ref{prematurata2}) and converges in $C^2_{loc}(\R^n)$ to a function $\tilde u$ which solves $-\Delta  u =   u^{2^* -1}$ in $\R^n$. From (\ref{ubpp}) we deduce that $\tilde u \rightarrow 0$ as  $|y| \rightarrow + \infty$. To apply Proposition \ref{UNLP} we have to check that $\tilde u$ has finite energy, but this is an immediate consequence of Fatou's lemma  and the assumption that $u_\lambda$ has finite energy (for the details see (\ref{gradesttu}) and (\ref{gradesttu2})). Thus $\tilde u=\delta_{x_0,\mu}$ for some $x_0 \in \R^n$, $\mu>0$. Since $\tilde u$ is a radial function we have $x_0=0$. Moreover, since $\tilde u(0)=1$, by an elementary computation we see that $\mu=\sqrt{n(n-2)}$. 

Now we prove (ii). As we have seen at the beginning of this section the equi-bounded family $(\tilde u_\lambda^-)$ converges in $C_{loc}^2(\R^n-\{0\})$ to a function $\tilde u$ which solves (\ref{WSLPBNf}). From Lemma \ref{lemdel1} and Lemma \ref{lemdel2} we have that $\tilde u$ can be extended to a $C^1(\R^n)$ function such that $\tilde u(0)=1$, $\nabla \tilde u(0)=0$.
Moreover from Lemma \ref{fedistrsol} we know that $\tilde u $ is a weak solution of (\ref{festproblim}) and from Fatou's lemma, as seen in (\ref{gradesttu}), (\ref{gradesttu2}), we have that $\tilde u$ has finite energy. Also from Proposition \ref{rescpn} we deduce that  $\tilde u \rightarrow 0$ as  $|y| \rightarrow + \infty$. 

By elliptic regularity (see for instance Appendix B of \cite{STRUWE}) since $\tilde u$ is a weak solution of (\ref{festproblim}) we deduce that $\tilde u \in C^2(\R^n)$. Thanks to Proposition \ref{UNLP}, since $\tilde u$ is a radial function and $\tilde u(0)=1$, we have $\tilde u= \delta_{0,\mu}$, where $\mu>0$ is the same as in (i).
\end{proof}

\section{Asymptotic behavior of $M_{\lambda,+}$, $M_{\lambda,-}$ and proof of Theorem \ref{asestpropmain}}
We know from Proposition \ref{PROP1} that $M_{\lambda,+}, M_{\lambda,-} \rightarrow +\infty$ as $\lambda \rightarrow 0$, in addition in the last two sections we have proved that $M_{\lambda,+}^\beta r_\lambda \rightarrow + \infty $ while $M_{\lambda,-}^\beta r_\lambda \rightarrow 0 $, as $\lambda \rightarrow 0$. Thus $\frac{M_{\lambda,+}}{M_{\lambda,-}}\rightarrow +\infty $ as $\lambda \rightarrow 0$; in other words $M_{\lambda,+}$ goes to infinity faster than $M_{\lambda,-}$. In this section we determine the order of infinity of $M_{\lambda,-}$ as negative power of $\lambda$ and also an asymptotic relation between $M_{\lambda,+}$, $M_{\lambda,-}$ and the node $r_\lambda$. 

\begin{prop}\label{asestprop1}
As $\lambda\rightarrow 0$ we have
\begin{description}
\item[(i)] $M_{\lambda,+}|(u_\lambda^+)^\prime (r_\lambda)| r_\lambda^{n-1}\rightarrow c_1(n)$;
\item[(ii)] $\lambda^{-1}M_{\lambda,+}^{2\beta} r_\lambda^{n} |(u_\lambda^+)^\prime (r_\lambda)|^2\rightarrow c_2(n)$;
\item[(iii)] $M_{\lambda,+}^{2-2\beta} r_\lambda^{n-2}\lambda \rightarrow c_3(n)$,
\end{description}
where $c_1(n)=\int_{0}^{\infty} \delta_{0,\mu}^{2^*-1}(s) s^{n-1}ds$, $c_2(n)=2\int_{0}^{\infty} \delta_{0,\mu}^{2}(s) s^{n-1}ds$, $c_3(n)=\frac{c_1^2(n)}{c_2(n)}$.
\end{prop}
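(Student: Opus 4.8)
The plan is to deduce (i) and (ii) from integral identities for $u_\lambda^+$ on the ball $B_{r_\lambda}$, using the rescaling Lemma \ref{ELLEM}, the convergence $\tilde u_\lambda^+\to\delta_{0,\mu}$ of Theorem \ref{mainteo} and the uniform bound of Proposition \ref{rescpp}, and then to obtain (iii) by eliminating $|(u_\lambda^+)^\prime(r_\lambda)|$ between (i) and (ii). For (i) I would start from the identity already derived in the proof of Lemma \ref{LEM1},
\[
(u_\lambda^+)^\prime(r_\lambda)\, r_\lambda^{n-1} = -\frac{1}{\omega_n}\left(\lambda\int_{B_{r_\lambda}}u_\lambda^+\,dx + \int_{B_{r_\lambda}}(u_\lambda^+)^{2^*-1}\,dx\right),
\]
multiply by $M_{\lambda,+}$ and change variables to $y=M_{\lambda,+}^\beta x$. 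Since $n\beta=2^*$, the critical term becomes $M_{\lambda,+}\int_{B_{r_\lambda}}(u_\lambda^+)^{2^*-1}\,dx=\omega_n\int_0^{\sigma_\lambda}(\tilde u_\lambda^+(s))^{2^*-1}s^{n-1}\,ds$; because $\sigma_\lambda\to+\infty$ (Proposition \ref{rodota}), $\tilde u_\lambda^+\to\delta_{0,\mu}$ in $C^2_{loc}$, and the bound $\tilde u_\lambda^+(s)\le(1+s^2/(n(n-2)))^{-(n-2)/2}$ makes $(\tilde u_\lambda^+)^{2^*-1}s^{n-1}$ dominated by an $L^1(0,+\infty)$ function, dominated convergence sends it to $c_1(n)$. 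For the remaining term the same substitution gives, up to a constant, $\lambda M_{\lambda,+}^{-2\beta}\int_{B_{\sigma_\lambda}}\tilde u_\lambda^+\,dy$, and the upper bound yields $\int_0^{\sigma_\lambda}\tilde u_\lambda^+(s)s^{n-1}\,ds=O(\sigma_\lambda^2)=O(M_{\lambda,+}^{2\beta}r_\lambda^2)$, so this term is $O(\lambda r_\lambda^2)\to0$; this proves (i).

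For (ii) the main tool is the Pohozaev identity applied to $u_\lambda^+$ as the positive solution of (\ref{LAP}) on $B_{r_\lambda}$. Since the exponent is critical, the bulk term $2nF(u)-(n-2)uf(u)$ with $f(u)=\lambda u+u^{2^*-1}$ and $F(u)=\frac{\lambda}{2}u^2+\frac{1}{2^*}u^{2^*}$ reduces to $2\lambda u^2$, so the identity collapses to
\[
\omega_n\, r_\lambda^n\, |(u_\lambda^+)^\prime(r_\lambda)|^2 = 2\lambda\int_{B_{r_\lambda}}(u_\lambda^+)^2\,dx.
\]
Multiplying by $\lambda^{-1}M_{\lambda,+}^{2\beta}$ and using Lemma \ref{ELLEM}(iii) together with $2^*-2=2\beta$, the left-hand side becomes $\frac{2}{\omega_n}\int_{B_{\sigma_\lambda}}(\tilde u_\lambda^+)^2\,dy=2\int_0^{\sigma_\lambda}(\tilde u_\lambda^+(s))^2s^{n-1}\,ds$. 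Here it is essential that $n\ge7$ (in fact $n>4$ suffices), for then $\delta_{0,\mu}^2(s)s^{n-1}$ decays like $s^{3-n}$ and is integrable at infinity; Proposition \ref{rescpp} again provides an $L^1$ dominating function, and dominated convergence gives the limit $2\int_0^{+\infty}\delta_{0,\mu}^2(s)s^{n-1}\,ds=c_2(n)$, which is (ii).

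Finally (iii) is purely algebraic: squaring (i) gives $M_{\lambda,+}^2 r_\lambda^{2n-2}|(u_\lambda^+)^\prime(r_\lambda)|^2\to c_1^2(n)$, while (ii) gives $|(u_\lambda^+)^\prime(r_\lambda)|^2=\lambda M_{\lambda,+}^{-2\beta}r_\lambda^{-n}(c_2(n)+o(1))$; inserting the latter into the former and collecting the powers of $M_{\lambda,+}$ and $r_\lambda$ yields $M_{\lambda,+}^{2-2\beta}r_\lambda^{n-2}\lambda\to c_1^2(n)/c_2(n)=c_3(n)$. I expect the only genuinely delicate point to be the justification of passing to the limit under the integral sign over the expanding domains $B_{\sigma_\lambda}$ in (i) and (ii); this is exactly what the explicit decay estimate of Proposition \ref{rescpp} is for, and it is also the place where the dimensional restriction enters, through the integrability of $\delta_{0,\mu}^2\,s^{n-1}$.
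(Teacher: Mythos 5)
Your proposal is correct and follows essentially the same route as the paper: integrating the radial equation over $(0,r_\lambda)$ and rescaling to get (i), applying Pohozaev's identity on $B_{r_\lambda}$ (where the critical terms cancel, leaving only the $\lambda u^2$ contribution) and rescaling via Lemma \ref{ELLEM}(iii) to get (ii), with dominated convergence over the expanding balls $B_{\sigma_\lambda}$ justified in both cases by the decay bound of Proposition \ref{rescpp}, and finally (iii) by eliminating $|(u_\lambda^+)^\prime(r_\lambda)|$ between (i) and (ii). Your remark that the integrability of $\delta_{0,\mu}^2 s^{n-1}$ at infinity only needs $n>4$ is a correct observation that the paper leaves implicit.
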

\begin{proof}
To prove (i) we integrate the equation $-[(u_\lambda^+)^\prime r^{n-1}]^\prime= \lambda u_\lambda^+ r^{n-1} + (u_\lambda^+)^{2^*-1}r^{n-1}$ between $0$ and $r_\lambda$ and multiply both sides by $M_{\lambda,+}$. Since $(u_\lambda^+)^\prime(0)=0$ we have
\begin{equation}\label{diesis}
M_{\lambda,+} |(u_\lambda^+)^\prime(r_\lambda)| r_\lambda^{n-1}= \lambda M_{\lambda,+} \int_0^{r_\lambda} u_\lambda^+ r^{n-1} \ dr+   M_{\lambda,+} \int_0^{r_\lambda}(u_\lambda^+)^{2^*-1}r^{n-1} \ dr.
\end{equation}
We first prove that $\lambda M_{\lambda,+} \int_0^{r_\lambda} u_\lambda^+ r^{n-1} \ dr \rightarrow 0$ as $\lambda \rightarrow 0$. In fact by the usual change of variable $r=\frac{s}{M_{\lambda,+}^\beta}$ we have
\begin{equation*}
\begin{array}{lll}
\lambda \displaystyle M_{\lambda,+} \int_0^{r_\lambda} u_\lambda^+(r) \ r^{n-1} \ dr &=& \displaystyle \lambda \frac{1}{M_{\lambda,+}^{2^*-2}} \int_0^{M_{\lambda,+}^\beta r_\lambda} \frac{1}{M_{\lambda,+}} u_\lambda^+\left(\frac{s}{M_{\lambda,+}^\beta}\right) s^{n-1} \ ds\\[18pt]
&=& \displaystyle \lambda \frac{1}{M_{\lambda,+}^{2^*-2}} \int_0^{M_{\lambda,+}^\beta r_\lambda} \tilde u_\lambda^+(s) s^{n-1} \ ds
\end{array}
\end{equation*}
Thanks to the uniform upper bound (\ref{ubpp}) we have
\begin{equation*}
\begin{array}{lll}
 \displaystyle \lambda \frac{1}{M_{\lambda,+}^{2^*-2}} \int_0^{M_{\lambda,+}^\beta r_\lambda} \tilde u_\lambda^+ \ s^{n-1} \ ds &\leq& \displaystyle  \lambda \frac{1}{M_{\lambda,+}^{2^*-2}} \int_0^{M_{\lambda,+}^\beta r_\lambda}  \left\{1+  \frac{ 1}{n (n-2)}{s}^2\right\}^{-(n-2)/2} s^{n-1} ds\\[10pt]
 &\leq& \displaystyle  \lambda \frac{1}{M_{\lambda,+}^{2^*-2}} \int_0^{1} s^{n-1} ds \\[10pt]
 &+&\displaystyle \lambda \frac{1}{M_{\lambda,+}^{2^*-2}}[n(n-2)]^{(n-2)/2} \int_1^{M_{\lambda,+}^\beta r_\lambda}  {s}^{-(n-2)} s^{n-1} ds\\[12pt]
 &=& I_{\lambda,1} + I_{\lambda,2}.
\end{array}
\end{equation*}
Since $M_{\lambda,+} \rightarrow +\infty$ and $\int_0^{1} s^{n-1} ds = \frac{1}{n}$ it's obvious that  $I_{\lambda,1} \rightarrow 0$, as $\lambda \rightarrow 0$. Now we show that the same holds for $ I_{\lambda,2}$. In fact, setting $C_1(n):=[n(n-2)]^{(n-2)/2}$ we have
\begin{equation*}
\begin{array}{lll}
\displaystyle I_{\lambda,2}&=&\displaystyle \lambda \frac{1}{M_{\lambda,+}^{2^*-2}}C_1(n) \int_1^{M_{\lambda,+}^\beta r_\lambda}  {s} \ ds\\[12pt]
&=&\displaystyle \lambda \frac{1}{M_{\lambda,+}^{2^*-2}}C_1(n) \left(\frac{M_{\lambda,+}^{2\beta} r_\lambda^2}{2} -\frac{1}{2}\right)\\[12pt]
&=&\displaystyle \lambda  r_\lambda^2\frac{C_1(n)}{2} -\lambda \frac{1}{M_{\lambda,+}^{2^*-2}}\frac{C_1(n)}{2}\rightarrow 0, \ \hbox{as } \lambda \rightarrow 0,
\end{array}
\end{equation*}
since by definition, $2\beta=\frac{4}{n-2}=2^*-2$.
To complete the proof of (i) we show that $M_{\lambda,+} \int_0^{r_\lambda}(u_\lambda^+)^{2^*-1}r^{n-1} \ dr \rightarrow \int_{0}^{\infty} \delta_{0,\mu}^{2^*-1}(s) s^{n-1}ds$ as $\lambda \rightarrow 0$. In fact, as before, by the change of variable $r=\frac{s}{M_{\lambda,+}^\beta}$ we have
\begin{equation*}
\begin{array}{lll}
 \displaystyle M_{\lambda,+} \int_0^{r_\lambda}[u_\lambda^+(r)]^{2^*-1} \ r^{n-1} \ dr &=& \displaystyle \frac{1}{M_{\lambda,+}^{2^*-1}} \int_0^{M_{\lambda,+}^{\beta} r_\lambda}\left[u_\lambda^+\left(\frac{s}{M_{\lambda,+}^\beta}\right)\right]^{2^*-1} s^{n-1} \ ds\\[18pt]
 &=& \displaystyle \int_0^{M_{\lambda,+}^{\beta} r_\lambda}[\tilde u_\lambda^+(s)]^{2^*-1} s^{n-1} \ ds.
\end{array}
\end{equation*}
Since $\tilde u_\lambda^+ \rightarrow \delta_{0,\mu} $ in $C_{loc}^2(\R^n)$, in particular we have $[\tilde u_\lambda^+(s)]^{2^*-1} \rightarrow [\delta_{0,\mu}(s)]^{2^*-1}$ as $\lambda \rightarrow 0$,  for all $s\geq 0$, and thanks to the uniform upper bound (\ref{ubpp}), by Lebesgue's dominated convergence theorem, it follows that  $ \int_0^{M_{\lambda,+}^{\beta} r_\lambda}[\tilde u_\lambda^+(s)]^{2^*-1} s^{n-1} \ ds \rightarrow \int_{0}^{\infty} \delta_{0,\mu}^{2^*-1}(s) s^{n-1}ds$ so by (\ref{diesis}) the proof of (i) is complete.

Now we prove (ii). Applying Pohozaev's identity to $u_\lambda^+$, which solves $-\Delta u=\lambda u + u ^{2^*-1}$ in $B_{r_\lambda}$, we have
$$\lambda \int_{B_{r_\lambda}} [u_\lambda^+(x)]^2 \ dx = \frac{1}{2} \int_{\partial B_{r_\lambda}}(x\cdot\nu) \left(\frac{\partial u_\lambda^+}{\partial \nu}\right)^2 \ d\sigma,$$
where $\nu$ is the exterior unit normal vector to $\partial B_{r_\lambda}$.  Since $u_\lambda^+$ is radial we have also $ \left(\frac{\partial u_\lambda^+}{\partial \nu}\right)^2= \left[(u_\lambda^+)^\prime(r_\lambda) \right]^2$ so, passing to the unit sphere $S^{n-1}$, we get
\begin{eqnarray*}
\lambda \int_{B_{r_\lambda}} [u_\lambda^+(x)]^2 \ dx &=& \frac{1}{2} r_\lambda^{n-1}\int_{S^{n-1}}r_\lambda \left[(u_\lambda^+)^\prime(r_\lambda) \right]^2  \ d\omega\\
&=& \frac{1}{2} \omega_n r_\lambda^{n} \left[(u_\lambda^+)^\prime(r_\lambda) \right]^2.
\end{eqnarray*}
Thus we have 
\begin{equation}\label{asest1}
\lambda^{-1} r_\lambda^{n} \left[(u_\lambda^+)^\prime(r_\lambda) \right]^2= 2\ \omega_n^{-1} \int_{B_{r_\lambda}} [u_\lambda^+(x)]^2 \ dx.
 \end{equation}
Now performing the same change of variable as in (i) we have 
\begin{eqnarray*}
\int_{B_{r_\lambda}} [u_\lambda^+(x)]^2 \ dx&=&\frac{1}{M_{\lambda,+}^{2^*-2}} \int_{B_{\sigma_\lambda}} \left[\frac{1}{M_{\lambda,+}}u_\lambda^+\left(\frac{y}{M_{\lambda,+}^\beta}\right)\right]^2 \ dy\\
&=&\frac{1}{M_{\lambda,+}^{2^*-2}} \int_{B_{\sigma_\lambda}} \left[\tilde u_\lambda^+\left(y \right)\right]^2 \ dy,
\end{eqnarray*}

Thus we get 
\begin{equation}\label{asest2}
M_{\lambda,+}^{2\beta} \int_{B_{r_\lambda}} [u_\lambda^+(x)]^2 \ dx =\int_{B_{\sigma_\lambda}} \left[\tilde u_\lambda^+\left(y \right)\right]^2 \ dy.
 \end{equation}

As in (i) since $\tilde u_\lambda^+ \rightarrow \delta_{0,\mu} $ in $C_{loc}^2(\R^n)$ and thanks to the uniform upper bound (\ref{ubpp}) we have $$\int_{B_{\sigma_\lambda}} \left[\tilde u_\lambda^+\left(y \right)\right]^2 \ dy \rightarrow \int_{\R^n} [\delta_{0,\mu}(y)]^2 \ dy=\omega_n \int_{0}^{+\infty} [\delta_{0,\mu}(r)]^2 r^{n-1}\ dr.$$
From this, (\ref{asest1}) and (\ref{asest2}) we deduce that 
$\lambda^{-1} M_{\lambda,+}^{2\beta} r_\lambda^{n} \left[(u_\lambda^+)^\prime(r_\lambda) \right]^2\rightarrow 2 \int_{0}^{+\infty} [\delta_{0,\mu}(r)]^2 r^{n-1}\ dr$, and (ii) is proved.

The proof of (iii) is a trivial consequence of (i) and (ii).
\end{proof}

Now we state a similar result for $M_{\lambda,-}$.

\begin{prop}\label{asestprop2}
As $\lambda\rightarrow 0$ we have the following:
\begin{description}
\item[(i)] $M_{\lambda,-}|(u_\lambda^-)^\prime (1)| \rightarrow c_1(n)$;
\item[(ii)] $\lambda^{-1}M_{\lambda,-}^{2\beta}  \left\{[(u_\lambda^-)^\prime (1)]^2 -[(u_\lambda^-)^\prime (r_\lambda)]^2 r_\lambda^{n}\right\}\rightarrow c_2(n)$;
\item[(iii)] $\lambda^{-1}M_{\lambda,-}^{2\beta}  [(u_\lambda^-)^\prime (r_\lambda)]^2 r_\lambda^{n}\rightarrow 0$;
\item[(iv)] $M_{\lambda,-}^{2-2\beta} \lambda \rightarrow c_3(n)$,
\end{description}
where $c_1(n)$, $c_2(n)$ and $c_3(n)$ are the constants defined in Proposition \ref{asestprop1}.
\end{prop}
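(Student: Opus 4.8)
The statement is the exact annulus-counterpart of Proposition~\ref{asestprop1}, so I would run the same two-step scheme: integrate the radial equation once to get (i), and invoke Pohozaev's identity on $A_{r_\lambda}$ to get (ii); then (iii) and (iv) will drop out by algebra from (i)--(ii) together with the already established non-comparability $M_{\lambda,-}/M_{\lambda,+}\to0$. The one structural novelty is the inner boundary sphere $\{|x|=r_\lambda\}$, whose contribution must be shown negligible; this is exactly where Lemma~\ref{LEM1} and the fact that $\rho_\lambda=M_{\lambda,-}^\beta r_\lambda\to0$ enter.

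For (i) I would integrate $-[(u_\lambda^-)'r^{n-1}]'=\lambda u_\lambda^- r^{n-1}+(u_\lambda^-)^{2^*-1}r^{n-1}$ between $r_\lambda$ and $1$ and multiply by $M_{\lambda,-}$, writing $M_{\lambda,-}(u_\lambda^-)'(1)$ as the sum of the node term $M_{\lambda,-}(u_\lambda^-)'(r_\lambda)r_\lambda^{n-1}$, a lower-order term $\lambda M_{\lambda,-}\int_{r_\lambda}^1 u_\lambda^- r^{n-1}\,dr$, and the main term $M_{\lambda,-}\int_{r_\lambda}^1 (u_\lambda^-)^{2^*-1}r^{n-1}\,dr$. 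Lemma~\ref{LEM1} gives $M_{\lambda,-}|(u_\lambda^-)'(r_\lambda)r_\lambda^{n-1}|\le c_1\rho_\lambda^{(n-2)/2}\to0$ (using $\beta\tfrac{n-2}{2}=1$). Performing the change of variable $r=s/M_{\lambda,-}^\beta$ and using $n\beta=2^*$, $2^*-2=2\beta$, the lower-order term becomes $\lambda M_{\lambda,-}^{-2\beta}\int_{\rho_\lambda}^{M_{\lambda,-}^\beta}\tilde u_\lambda^-(s)s^{n-1}\,ds$; the uniform bound $\tilde u_\lambda^-\le U_h$ of Proposition~\ref{rescpn} (decay $\sim|y|^{-(n-2)}$) makes this integral at most $O(1+M_{\lambda,-}^{2\beta})$, so the whole term is $O(\lambda)\to0$. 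The main term becomes $\int_{\rho_\lambda}^{M_{\lambda,-}^\beta}[\tilde u_\lambda^-(s)]^{2^*-1}s^{n-1}\,ds$, and since $\tilde u_\lambda^-\to\delta_{0,\mu}$ in $C^2_{loc}(\R^n-\{0\})$ by Theorem~\ref{mainteo}(ii) and $U_h^{2^*-1}s^{n-1}\in L^1(0,\infty)$, dominated convergence yields the limit $c_1(n)$; since $(u_\lambda^-)'(1)<0$ this proves (i).

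For (ii) I would apply Pohozaev's identity to $u_\lambda^-$ on the annulus $A_{r_\lambda}$: on $\{|x|=1\}$ one has $x\cdot\nu=1$ and on $\{|x|=r_\lambda\}$ one has $x\cdot\nu=-r_\lambda$, and radial symmetry turns the boundary integrals into $[(u_\lambda^-)'(1)]^2$ and $[(u_\lambda^-)'(r_\lambda)]^2$, giving $\lambda\int_{A_{r_\lambda}}(u_\lambda^-)^2\,dx=\tfrac{\omega_n}{2}\bigl([(u_\lambda^-)'(1)]^2-r_\lambda^n[(u_\lambda^-)'(r_\lambda)]^2\bigr)$. Multiplying by $2\omega_n^{-1}\lambda^{-1}M_{\lambda,-}^{2\beta}$ and using the scaling identity $M_{\lambda,-}^{2\beta}\int_{A_{r_\lambda}}(u_\lambda^-)^2\,dx=\int_{A_{\rho_\lambda}}(\tilde u_\lambda^-)^2\,dy$ (Remark~\ref{santoro}), I would pass to the limit on the right-hand side using again Proposition~\ref{rescpn} together with $\delta_{0,\mu}^2 s^{n-1}\in L^1(0,\infty)$ (finite since $n\ge7$), obtaining $\int_{\R^n}\delta_{0,\mu}^2\,dy=\omega_n\int_0^\infty\delta_{0,\mu}^2(s)s^{n-1}\,ds$, i.e.\ $c_2(n)$ after the factor $2\omega_n^{-1}$; this is (ii).

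Item (iii) is then immediate: $u_\lambda'$ is continuous at the node, so $[(u_\lambda^-)'(r_\lambda)]^2=[(u_\lambda^+)'(r_\lambda)]^2$, and Proposition~\ref{asestprop1}(ii) gives $\lambda^{-1}M_{\lambda,-}^{2\beta}r_\lambda^n[(u_\lambda^-)'(r_\lambda)]^2\sim c_2(n)(M_{\lambda,-}/M_{\lambda,+})^{2\beta}\to0$, since $M_{\lambda,+}^\beta r_\lambda\to+\infty$ and $M_{\lambda,-}^\beta r_\lambda\to0$ force $M_{\lambda,-}/M_{\lambda,+}\to0$. Finally (iv) follows algebraically: (i) gives $M_{\lambda,-}^2[(u_\lambda^-)'(1)]^2\to c_1^2(n)$, (ii)--(iii) give $\lambda^{-1}M_{\lambda,-}^{2\beta}[(u_\lambda^-)'(1)]^2\to c_2(n)$, and dividing these two asymptotics yields $\lambda M_{\lambda,-}^{2-2\beta}\to c_1^2(n)/c_2(n)=c_3(n)$. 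The genuinely delicate points, and the main obstacle, are the two limit passages under the integral sign in (i) and (ii): they require the uniform upper bound of Proposition~\ref{rescpn} (ultimately Proposition~\ref{lbnp}) on the \emph{whole} rescaled annulus, not merely on compact sets, plus the observation that the $\delta_{0,\mu}$-decay rate keeps every relevant integral over $(0,\infty)$ finite in the range of dimensions considered.
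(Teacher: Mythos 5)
Your proposal is correct and follows essentially the same route as the paper: one integration of the radial ODE for (i), Pohozaev's identity on the annulus for (ii), comparison with Proposition \ref{asestprop1}(ii) via $(u_\lambda^-)'(r_\lambda)=-(u_\lambda^+)'(r_\lambda)$ and $M_{\lambda,-}/M_{\lambda,+}\to 0$ for (iii), and algebra for (iv), with the limit passages justified by the uniform bound of Proposition \ref{rescpn}. The only (harmless) deviation is in (i): the paper integrates from $s_\lambda$, where $(u_\lambda^-)'(s_\lambda)=0$, so no inner boundary term appears, whereas you integrate from $r_\lambda$ and kill the node term via Lemma \ref{LEM1} and $\rho_\lambda\to 0$ — both work.
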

\begin{proof}
The proof of (i) is similar to the proof of (i) of Proposition \ref{asestprop1}. Here we integrate the equation $-[(u_\lambda^-)^\prime r^{n-1}]^\prime= \lambda u_\lambda^- r^{n-1} + (u_\lambda^-)^{2^*-1}r^{n-1}$ between $s_\lambda$ and $1$. Since $ (u_\lambda^-)^\prime(s_\lambda)=0$  we have
$$(u_\lambda^-)^\prime(1) = \lambda \int_{s_\lambda}^{1} u_\lambda^-\  r^{n-1} \ dr+   \int_{s_\lambda}^{1}(u_\lambda^-)^{2^*-1}r^{n-1} \ dr.$$
By $M_\lambda^\beta s_\lambda\rightarrow 0$ and   thanks to the uniform upper bound (\ref{Geulbnp}), arguing like in the proof of (i) of Proposition \ref{asestprop1},
we have $$M_{\lambda,-} \ \lambda \int_{s_\lambda}^{1} u_\lambda^-\  r^{n-1} \ dr \rightarrow 0$$ 
and 
$$ M_{\lambda,-} \int_{s_\lambda}^{1}(u_\lambda^-)^{2^*-1}r^{n-1} \ dr = \int_{M_{\lambda,-} ^\beta s_\lambda}^{M_{\lambda,-} ^\beta} (\tilde u_\lambda^-)^{2^*-1}s^{n-1} \ ds \rightarrow  \int_{0}^{+ \infty} \delta_{0,\mu}^{2^*-1}s^{n-1} \ ds,$$
as $\lambda \rightarrow 0$. The proof of (i) is complete.

The proof of (ii) is similar to the corresponding one of Proposition \ref{asestprop1}. This time we apply Pohozaev's identity to $u_\lambda^-$ in the annulus $A_{r_\lambda}= \{x \in \R^n; \ r_\lambda<|x|<1\}$ whose boundary has two connected components, namely $\{x \in \R^n; \ |x|= r_\lambda\}$ and the unit sphere $S^{n-1}$. Thus we have
\begin{eqnarray*}
\lambda \int_{A_{r_\lambda}} [u_\lambda^-(x)]^2 dx &=& \frac{1}{2}  \int_{\partial A_{r_\lambda}} (x\cdot\nu)\left(\frac{\partial u_\lambda^-}{\partial \nu}\right)^2 \ d\sigma\\
&=& \frac{1}{2} \omega_n \left\{ [(u_\lambda^-)^\prime(1)]^2 - [(u_\lambda^-)^\prime(r_\lambda)]^2 r_\lambda^{n}\right\}.
\end{eqnarray*}
Thus multiplying each member by $M_{\lambda,-}^{2\beta}$ and rewriting the previous equation we have
\begin{eqnarray*}
M_{\lambda,-}^{2\beta} \lambda^{-1} \left\{[(u_\lambda^-)^\prime(1)]^2- [(u_\lambda^-)^\prime(r_\lambda)]^2 r_\lambda^{n}\right\} &=& 2\omega_n^{-1} M_{\lambda,-}^{2\beta} \int_{A_{r_\lambda}} [u_\lambda^-(x)]^2 dx\\
&=& 2\omega_n^{-1} M_{\lambda,-}^{2\beta}\frac{1}{M_{\lambda,-}^{n\beta}} \int_{A_{\sigma_\lambda}} \left[u_\lambda^-\left(\frac{y}{M_{\lambda,-}^{\beta}}\right)\right]^2 dy\\
&=& 2  \int_{M_{\lambda,-}^{\beta}r_\lambda}^{M_{\lambda,-}^{\beta}} \left[\tilde u_\lambda^-(s)\right]^2 \ s^{n-1}ds.
\end{eqnarray*}
Since $2  \int_{M_{\lambda,-}^{\beta}r_\lambda}^{M_{\lambda,-}^{\beta}} \left[\tilde u_\lambda^-(s)\right]^2 \ s^{n-1}ds \rightarrow 2\int_{0}^{\infty} \delta_{0,\mu}^{2}(s) s^{n-1}ds$ as $\lambda \rightarrow 0$ we are done. 

To prove (iii) we write 
\begin{eqnarray*}
\lambda^{-1}M_{\lambda,-}^{2\beta}  [(u_\lambda^-)^\prime (r_\lambda)]^2 r_\lambda^{n}&=&\frac {\lambda^{-1}M_{\lambda,-}^{2\beta}  [(u_\lambda^-)^\prime (r_\lambda)]^2 r_\lambda^{n}}{\lambda^{-1}M_{\lambda,+}^{2\beta}  [(u_\lambda^+)^\prime (r_\lambda)]^2 r_\lambda^{n}} \cdot \lambda^{-1}M_{\lambda,+}^{2\beta}  [(u_\lambda^+)^\prime (r_\lambda)]^2 r_\lambda^{n}\\[12pt]
&=&\frac {M_{\lambda,-}^{2\beta} }{M_{\lambda,+}^{2\beta} } \cdot \ \lambda^{-1}M_{\lambda,+}^{2\beta}  [(u_\lambda^+)^\prime (r_\lambda)]^2 r_\lambda^{n}\rightarrow 0
\end{eqnarray*}
 since $\frac {M_{\lambda,-}}{M_{\lambda,+}}  \rightarrow 0$ and  $\lambda^{-1}M_{\lambda,+}^{2\beta}  [(u_\lambda^+)^\prime (r_\lambda)]^2 r_\lambda^{n}\rightarrow c_2(n)$ as $\lambda \rightarrow 0$ (by (ii) of Proposition  \ref{asestprop1}).
 
Finally the proof of (iv) is trivial. In fact from (ii) and (iii) it immediately follows that $$\lambda^{-1}M_{\lambda,-}^{2\beta}  [(u_\lambda^-)^\prime (1)]^2\rightarrow c_2(n).$$
 From this and (i), we get (iv).
\end{proof}

From (iii) of Proposition  \ref{asestprop1} and (iv) of Proposition  \ref{asestprop2} we deduce  the following result which gives an asymptotic relation between $M_{\lambda,+}$, $M_{\lambda,-}$ and $r_\lambda$.
\begin{prop}
$\displaystyle \frac{M_{\lambda,-}^{2-2\beta}}{M_{\lambda,+}^{2-2\beta}r_\lambda^{n-2}} \rightarrow 1$, {as} $\ \lambda \rightarrow 0$.
\end{prop}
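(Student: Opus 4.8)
The plan is to deduce the statement directly from the two asymptotic identities already established, with essentially no new analysis required. The key observation is that (iii) of Proposition~\ref{asestprop1} and (iv) of Proposition~\ref{asestprop2} both involve the \emph{same} constant $c_3(n)=c_1^2(n)/c_2(n)$, so the quantity in question is a quotient of two sequences converging to the same nonzero limit.

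Concretely, the first step is to introduce a factor $\lambda$ in the numerator and denominator:
\[
\frac{M_{\lambda,-}^{2-2\beta}}{M_{\lambda,+}^{2-2\beta}r_\lambda^{n-2}}
=\frac{M_{\lambda,-}^{2-2\beta}\,\lambda}{M_{\lambda,+}^{2-2\beta}\,r_\lambda^{n-2}\,\lambda}.
\]
By (iv) of Proposition~\ref{asestprop2} the numerator tends to $c_3(n)$, while by (iii) of Proposition~\ref{asestprop1} the denominator tends to $c_3(n)$ as well. Since $c_1(n)$ and $c_2(n)$ are integrals of strictly positive functions, $c_3(n)>0$; hence the quotient of the two limits is well defined and equals $1$, which is precisely the assertion. (This is also consistent with the constant $c(n)$ of Theorem~\ref{asestpropmain}, for which $c(n)=c_3(n)$.)

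There is no real obstacle at this stage: all the substantive work — Theorem~\ref{mainteo}, the uniform upper bounds (\ref{ubpp}) and (\ref{Geulbnp}), and the Pohozaev identities applied to $u_\lambda^+$ in $B_{r_\lambda}$ and to $u_\lambda^-$ in $A_{r_\lambda}$ — has already been carried out in Sections~4 and~5, and what remains is the elementary algebra of dividing one convergent sequence with nonzero limit by another. If one wishes to be slightly more careful, one may note that $r_\lambda\in(0,1)$ and $M_{\lambda,\pm}>0$ guarantee that the denominator is strictly positive for every $\lambda$, so the quotient is well defined along the whole family and the passage to the limit is legitimate.
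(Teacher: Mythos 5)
Your argument is correct and is exactly the paper's: the proposition is stated there as an immediate consequence of (iii) of Proposition \ref{asestprop1} and (iv) of Proposition \ref{asestprop2}, dividing the two sequences that converge to the same positive constant $c_3(n)=c_1^2(n)/c_2(n)$. Your added remarks on the positivity of $c_3(n)$ and of the denominator are harmless and only make explicit what the paper leaves implicit.
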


\begin{rem}\label{ovvio}
By elementary computation $2- 2\beta=2-\frac{4}{n-2}=\frac{2n-8}{n-2}$ so by (iv) of Proposition  \ref{asestprop2} we have that $M_{\lambda,-}$ is an infinite of the same order as $\lambda^{-\frac{n-2}{2n-8}}$.
\end{rem}

\section{Proof of Theorem  \ref{greentea}}
This section is entirely devoted to the proof of Theorem  \ref{greentea}.

\begin{proof}[Proof of Theorem  \ref{greentea}]
We want to prove that $\lambda^{-\frac{n-2}{2n-8}} u_\lambda \rightarrow \tilde c(n) G(x,0)$ in $C_{loc}^1(B_1-\{0\})$. We begin from the local uniform convergence of $\lambda^{-\frac{n-2}{2n-8}} u_\lambda$. The same argument with some modifications will work for the local uniform convergence of its derivatives. 
Thanks to the representation formula, since $-\Delta u_\lambda = \lambda u_\lambda + |u_\lambda|^{2^*-2}u_\lambda$ in $B_1$, we have
\begin{equation}\label{formrappres}
\lambda^{-\frac{n-2}{2n-8}} u_\lambda(x) =- \lambda^{-\frac{n-2}{2n-8}} \lambda \int_{B_1} G(x,y) u_\lambda(y) \ dy -  \lambda^{-\frac{n-2}{2n-8}} \int_{B_1} G(x,y)  |u_\lambda|^{2^*-2}u_\lambda(y) \ dy.
\end{equation}
Since  $ \lambda^{-\frac{n-2}{2n-8}} \lambda = \lambda^{\frac{n-6}{2n-8}}$, splitting the integrals we have
\begin{eqnarray*}
\begin{array}{lllll}
\displaystyle \lambda^{-\frac{n-2}{2n-8}} u_\lambda(x) &=&- \displaystyle  \lambda^{\frac{n-6}{2n-8}} \int_{B_{r_\lambda}} G(x,y) u_\lambda^+(y) \ dy  &+&  \displaystyle \lambda^{\frac{n-6}{2n-8}} \int_{A_{r_\lambda}} G(x,y) u_\lambda^-(y) \ dy\\[12pt]
&&\displaystyle -  \lambda^{-\frac{n-2}{2n-8}} \int_{B_{r_\lambda}} G(x,y)  [u_\lambda^+(y)]^{2^*-1} \ dy &+& \displaystyle   \lambda^{-\frac{n-2}{2n-8}} \int_{A_{r_\lambda}} G(x,y)  [u_\lambda^-(y)]^{2^*-1} \ dy\\[12pt]
&=&I_{1,\lambda} + I_{2,\lambda} +I_{3,\lambda} +I_{4,\lambda}.&&
\end{array}
\end{eqnarray*}

Let $K$ be a compact subset of $B_1-\{0\}$. We are going to prove that
$I_{1,\lambda}, I_{2,\lambda}, I_{3,\lambda} \rightarrow 0$ uniformly in $K$, as $\lambda\rightarrow 0$.  We begin with $I_{1,\lambda}$. For all $x \in K$ we have 

\begin{eqnarray*}
\begin{array}{lll}
\displaystyle |I_{1,\lambda}|&\leq& \displaystyle \left| \lambda^{\frac{n-6}{2n-8}} \int_{B_{r_\lambda}} G(x,y) u_\lambda^+(y) \ dy\right|\\[16pt] 
&=&\displaystyle \left| \lambda^{\frac{n-6}{2n-8}} \frac{1}{M_{\lambda,+}^{n\beta}}\int_{B_{M_{\lambda,+}^{\beta}r_\lambda}} G\left(x,\frac{y}{M_{\lambda,+}^{\beta}}\right) u_\lambda^+\left(\frac{y}{M_{\lambda,+}^{\beta}}\right) \ dy\right|\\[16pt]
&\leq&\displaystyle  \lambda^{\frac{n-6}{2n-8}} \frac{1}{M_{\lambda,+}^{2^*-1}}\int_{B_{M_{\lambda,+}^{\beta}r_\lambda}} \left|G\left(x,\frac{y}{M_{\lambda,+}^{\beta}}\right)\right| \tilde u_\lambda^+\left(y\right) \ dy.
\end{array}
\end{eqnarray*}
Since $K$ is a compact subset of $B_1-\{0\}$ and $|\frac{y}{M_{\lambda,+}^{\beta}}|< r_\lambda$ by an elementary computation we see that for all $x \in K$, for all $\lambda>0$ sufficiently small $\left|G\left(x,\frac{y}{M_{\lambda,+}^{\beta}}\right)\right| \leq c (K)$ for all $y \in B_{M_{\lambda,+}^{\beta}r_\lambda} $, where $c=c(K)$ is a positive costant depending only on $K$ and $n$. Now thanks to the uniform upper bound (\ref{ubpp}) we have
\begin{eqnarray*}
\begin{array}{lll}
&&\displaystyle  \lambda^{\frac{n-6}{2n-8}} \frac{1}{M_{\lambda,+}^{2^*-1}}\int_{B_{M_{\lambda,+}^{\beta}r_\lambda}} \left|G\left(x,\frac{y}{M_{\lambda,+}^{\beta}}\right)\right| \tilde u_\lambda^+\left(y\right) \ dy\\[16pt]
&\leq&\displaystyle c(K) \lambda^{\frac{n-6}{2n-8}} \frac{1}{M_{\lambda,+}^{2^*-1}}\int_{B_{M_{\lambda,+}^{\beta}r_\lambda}} \left\{1+  \frac{ 1}{n (n-2)}\left|{y}\right|^2\right\}^{-(n-2)/2}
\ dy\\[16pt]
&=&\displaystyle c(K) \lambda^{\frac{n-6}{2n-8}} \frac{1}{M_{\lambda,+}^{2^*-1}} \omega_n \int_0^{{M_{\lambda,+}^{\beta}r_\lambda}} \left\{1+  \frac{ 1}{n (n-2)}s^2\right\}^{-(n-2)/2}s^{n-1}\
\ ds\\[16pt]
&\leq&\displaystyle c_1(K) \lambda^{\frac{n-6}{2n-8}} \frac{1}{M_{\lambda,+}^{2^*-1}}  \int_0^{{M_{\lambda,+}^{\beta}r_\lambda}} s^{-(n-2)}s^{n-1}\
\ ds = \displaystyle c_1(K) \lambda^{\frac{n-6}{2n-8}} \frac{1}{M_{\lambda,+}^{2^*-1}}  \int_0^{{M_{\lambda,+}^{\beta}r_\lambda}} s\ ds\\[16pt]
&=&\displaystyle c_2(K) \lambda^{\frac{n-6}{2n-8}} \frac{1}{M_{\lambda,+}^{2^*-1}} \ M_{\lambda,+}^{2\beta}r_\lambda^2 
=\displaystyle c_2(K) \lambda^{\frac{n-6}{2n-8}} \frac{1}{M_{\lambda,+}} r_\lambda^2 \rightarrow 0, \ \hbox{as}\ \lambda\rightarrow 0.
\end{array}
\end{eqnarray*}
Since this inequality is uniform respect to $x \in K$ we have  $\|I_{1,\lambda}\|_{\infty,K} \rightarrow 0$ as $\lambda \rightarrow 0$. The proof that $\|I_{3,\lambda}\|_{\infty,K} \rightarrow 0$ is quite similar to the previous one, in fact with small modifications we get the following uniform estimate:

\begin{eqnarray*}
\begin{array}{lll}
|I_{3,\lambda}|&\leq&\displaystyle  \lambda^{\frac{n-6}{2n-8}} \frac{1}{M_{\lambda,+}}\int_{B_{M_{\lambda,+}^{\beta}r_\lambda}} \left|G\left(x,\frac{y}{M_{\lambda,+}^{\beta}}\right)\right| [\tilde u_\lambda^+\left(y\right)]^{2^*-1} \ dy\\[16pt]
&\leq&\displaystyle c(K) \lambda^{\frac{n-6}{2n-8}} \frac{1}{M_{\lambda,+}}\int_{B_{M_{\lambda,+}^{\beta}r_\lambda}} \left\{1+  \frac{ 1}{n (n-2)}\left|{y}\right|^2\right\}^{-(n+2)/2} \ dy\\[16pt]
&\leq&\displaystyle c(K) \lambda^{\frac{n-6}{2n-8}} \frac{1}{M_{\lambda,+}}\int_{\R^n}\left\{1+  \frac{ 1}{n (n-2)}\left|{y}\right|^2\right\}^{-(n+2)/2}\ dy \\[16pt]
&=&\displaystyle c_1(K) \ \lambda^{\frac{n-6}{2n-8}} \frac{1}{M_{\lambda,+}},\ \ \hbox{as}\ \lambda\rightarrow 0.
\end{array}
\end{eqnarray*}

The proof for $I_{2,\lambda}$ is more delicate since for all small $\lambda>0$  the Green function is not bounded when $x \in K$, $y \in A_{r_\lambda}$. We split the Green function in the singular part and the regular part so that
$$ I_{2,\lambda}= \lambda^{\frac{n-6}{2n-8}} \int_{A_{r_\lambda}} G_{sing}(x,y) u_\lambda^-(y) \ dy+\lambda^{\frac{n-6}{2n-8}} \int_{A_{r_\lambda}} G_{reg}(x,y) u_\lambda^-(y) \ dy.$$
The singular part of the Green function is given by $\frac{1}{n(2-n)\omega_n} \frac{1}{|x-y|^{n-2}}$, we want to show that 
$$ \lambda^{\frac{n-6}{2n-8}} \frac{1}{n(2-n)\omega_n} \int_{A_{r_\lambda}} \frac{1}{|x-y|^{n-2}} u_\lambda^-(y) \ dy  \rightarrow 0$$
uniformly for $x \in K$. The usual change of variable gives

\begin{eqnarray*}
\begin{array}{lll}
&&\displaystyle  \lambda^{\frac{n-6}{2n-8}} \frac{1}{n(2-n)\omega_n} \int_{A_{r_\lambda}} \frac{1}{|x-y|^{n-2}} u_\lambda^-(y) \ dy\\
&=& \displaystyle \frac{\lambda^{\frac{n-6}{2n-8}}}{M_{\lambda,-}^{2^*}} \frac{1}{n(2-n)\omega_n}\int_{\tilde A_{r_\lambda}} \frac{1}{|x-\frac{w}{M_{\lambda,-}^{\beta}}|^{n-2}} u_\lambda^-\left(\frac{w}{M_{\lambda,-}^{\beta}}\right) \ dw.
\end{array}
\end{eqnarray*}
Let  $\eta$ be a positive real number such that $\eta < \min\{\frac{d(0,K)}{2};\frac{d(K,\partial B_1)}{2}\}$, where $d(\cdot,\cdot)$ denotes the euclidean distance. It's clear that for all $\lambda>0$ sufficiently small we have $B(x,\eta) \subset \subset A_{r_\lambda}$, for all $x \in K$. Thus $B(M_{\lambda,-}^{\beta}x,M_{\lambda,-}^{\beta}\eta) \subset \subset \tilde A_{r_\lambda}$,  for all $x \in K$, and we split the last integral in two parts as indicated below:
\begin{eqnarray*}
\begin{array}{lll}
&& \displaystyle \frac{\lambda^{\frac{n-6}{2n-8}}}{M_{\lambda,-}^{2^*}} \frac{1}{n(2-n)\omega_n} \int_{\tilde A_{r_\lambda}} \frac{1}{|x-\frac{w}{M_{\lambda,-}^{\beta}}|^{n-2}} u_\lambda^-\left(\frac{w}{M_{\lambda,-}^{\beta}}\right) \ dw \\[16pt]
&=& \displaystyle \frac{\lambda^{\frac{n-6}{2n-8}}}{M_{\lambda,-}^{2^*}} \frac{1}{n(2-n)\omega_n} \int_{|M_{\lambda,-}^{\beta}x- w|<M_{\lambda,-}^{\beta}\eta} \frac{1}{|x-\frac{w}{M_{\lambda,-}^{\beta}}|^{n-2}} u_\lambda^-\left(\frac{w}{M_{\lambda,-}^{\beta}}\right) \ dw \\[16pt]
 &+& \displaystyle  \frac{\lambda^{\frac{n-6}{2n-8}}}{M_{\lambda,-}^{2^*}} \frac{1}{n(2-n)\omega_n} \int_{\{|M_{\lambda,-}^\beta x - w|\geq M_{\lambda,-}^\beta\eta \}\ \cap\  \tilde A_{r_\lambda}} \frac{1}{|x-\frac{w}{M_{\lambda,-}^{\beta}}|^{n-2}} u_\lambda^-\left(\frac{w}{M_{\lambda,-}^{\beta}}\right) \ dw\\[16pt]
 &=& \displaystyle \frac{\lambda^{\frac{n-6}{2n-8}}}{M_{\lambda,-}^{2^*-1}} \frac{1}{n(2-n)\omega_n} \int_{|M_{\lambda,-}^{\beta}x- w|<M_{\lambda,-}^{\beta}\eta} \frac{{M_{\lambda,-}^{(n-2)\beta}}}{|{M_{\lambda,-}^{\beta}}x-{w}|^{n-2}} \tilde u_\lambda^-\left(w \right) \ dw \\[16pt]
 &+& \displaystyle  \frac{\lambda^{\frac{n-6}{2n-8}}}{M_{\lambda,-}^{2^*-1}} \frac{1}{n(2-n)\omega_n} \int_{\{|M_{\lambda,-}^\beta x - w|\geq M_{\lambda,-}^\beta\eta \}\ \cap\  \tilde A_{r_\lambda}} \frac{{M_{\lambda,-}^{(n-2)\beta}}}{|{M_{\lambda,-}^{\beta}}x-{w}|^{n-2}} \tilde u_\lambda^-\left(w \right) \ dw
 := \tilde I_{A,\lambda}+\tilde I_{B,\lambda}.
\end{array}
\end{eqnarray*}

Let's show that $\tilde I_{A,\lambda} \rightarrow 0$, uniformly for $x \in K$, as $\lambda \rightarrow 0$. First, by making the change of variable  $z:=w-{M_{\lambda,-}^{\beta}}x$ we have
\begin{eqnarray*}
\tilde I_{A,\lambda}&=& \displaystyle \frac{\lambda^{\frac{n-6}{2n-8}}}{M_{\lambda,-}^{2^*-1}} \frac{1}{n(2-n)\omega_n} \int_{|z|<M_{\lambda,-}^{\beta}\eta} \frac{{M_{\lambda,-}^{(n-2)\beta}}}{|z|^{n-2}} \tilde u_\lambda^-\left(z+ {M_{\lambda,-}^{\beta}}x\right) \ dz.
\end{eqnarray*}
Let us fix $\epsilon \in (0,\frac{n-2}{2})$ and set $C=\frac{2}{n-2}\epsilon$. Thanks to the uniform upper bound (\ref{Geulbnp}), since
\begin{equation}\label{osservuup}
 |M_{\lambda,-}^{\beta}x+z|\geq |M_{\lambda,-}^{\beta}|x|-|z|| = M_{\lambda,-}^{\beta}|x|-|z|  \geq M_{\lambda,-}^{\beta} (|x|-\eta)> M_{\lambda,-}^{\beta} \frac{d(0,K)}{2}\geq  M_{\lambda,-}^{\beta} \eta,
 \end{equation}
for all $x \in K$, for all $z$ such that $|z|< \eta M_{\lambda,-}^{\beta} $, then for all sufficiently small $\lambda$ we have
\begin{eqnarray*}
\begin{array}{lll}
|\tilde I_{A,\lambda}|&\leq&  \displaystyle \frac{\lambda^{\frac{n-6}{2n-8}}}{M_{\lambda,-}^{2^*-1}}  \frac{1}{n(n-2)\omega_n}\int_{|z|<M_{\lambda,-}^{\beta}\eta} \frac{{M_{\lambda,-}^{(n-2)\beta}}}{|z|^{n-2}}  \left[1+\frac{1}{n (n-2)} C |z+ {M_{\lambda,-}^{\beta}}x|^2\right]^{-(n-2)/2}  \ dz\\[16pt]
&\leq&  \displaystyle \frac{\lambda^{\frac{n-6}{2n-8}}}{M_{\lambda,-}^{2^*-1}}  c_1 \int_{|z|<M_{\lambda,-}^{\beta}\eta} \frac{{M_{\lambda,-}^{(n-2)\beta}}}{|z|^{n-2}}  \left[M_{\lambda,-}^{2\beta} \eta^2\right]^{-(n-2)/2}  \ dz\\[16pt]
&=&  \displaystyle \frac{\lambda^{\frac{n-6}{2n-8}}}{M_{\lambda,-}^{2^*-1}} c_2(K)\omega_n \int_{0}^{M_{\lambda,-}^{\beta}\eta} r  \ dr
=  \displaystyle \frac{\lambda^{\frac{n-6}{2n-8}}}{M_{\lambda,-}^{2^*-1}} c_2(K) \omega_n \frac{M_{\lambda,-}^{2\beta}\eta^2}{2}\\[16pt]
&=&  \displaystyle c_3(K)\frac{\lambda^{\frac{n-6}{2n-8}}}{M_{\lambda,-}} \rightarrow 0, \ \ \hbox{as} \ \lambda \rightarrow 0. 
\end{array}
\end{eqnarray*}
Thus $\tilde I_{A,\lambda} \rightarrow 0$, uniformly for $x \in K$, as $\lambda \rightarrow 0$.
Now we prove that the same holds for $\tilde I_{B,\lambda}$. 

\begin{eqnarray*}
\begin{array}{lll}
|\tilde I_{B,\lambda}| &\leq&\displaystyle  \frac{\lambda^{\frac{n-6}{2n-8}}}{M_{\lambda,-}^{2^*-1}} \frac{1}{n(n-2)\omega_n} \int_{\{|M_{\lambda,-}^\beta x - w|\geq M_{\lambda,-}^\beta\eta \}\ \cap\  \tilde A_{r_\lambda}} \frac{1}{|\eta|^{n-2}} \tilde u_\lambda^-\left(w \right) \ dw\\[16pt]
  &\leq&\displaystyle  \frac{\lambda^{\frac{n-6}{2n-8}}}{M_{\lambda,-}^{2^*-1}} c(K)\int_{\tilde A_{r_\lambda}} \tilde u_\lambda^-\left(w \right) \ dw\\[16pt]
    &\leq&\displaystyle  \frac{\lambda^{\frac{n-6}{2n-8}}}{M_{\lambda,-}^{2^*-1}} c(K)\int_{|w|\leq h} 1\ dw + \frac{\lambda^{\frac{n-6}{2n-8}}}{M_{\lambda,-}^{2^*-1}} c(K)\int_{h <|w|<M_{\lambda,-}^{\beta}} \left[1+\frac{1}{n (n-2)} C|w|^2\right]^{-(n-2)/2} \ dw \\[16pt]
   &\leq&\displaystyle  \frac{\lambda^{\frac{n-6}{2n-8}}}{M_{\lambda,-}^{2^*-1}} c_1(K)+ \frac{\lambda^{\frac{n-6}{2n-8}}}{M_{\lambda,-}^{2^*-1}} c_2(K) \int_{h}^{M_{\lambda,-}^{\beta}}  r \ dr\\[16pt]
    &=&\displaystyle  \frac{\lambda^{\frac{n-6}{2n-8}}}{M_{\lambda,-}^{2^*-1}} c_1(K)+ \frac{\lambda^{\frac{n-6}{2n-8}}}{M_{\lambda,-}^{2^*-1}} c_2(K) \left(\frac{{M_{\lambda,-}^{2\beta}}}{2}-\frac{h^2}{2}\right)\rightarrow 0, \ \hbox{as} \ \lambda\rightarrow 0,
\end{array}
\end{eqnarray*}
having used again (\ref{Geulbnp}).
Since this estimate is uniform for $x \in K$ we have proved that $\tilde I_{B,\lambda} \rightarrow 0$ in $C^0(K)$ and from this and the analogous result for $\tilde I_{A,\lambda}$ we have $  \lambda^{\frac{n-6}{2n-8}} \int_{A_{r_\lambda}} G_{sing}(x,y) u_\lambda^-(y) \ dy \rightarrow 0$ in $C^0(K)$. To complete the proof of $I_{2,\lambda} \rightarrow 0$ in $C^0(K)$ it remains to prove that $\lambda^{\frac{n-6}{2n-8}} \int_{A_{r_\lambda}} G_{reg}(x,y) u_\lambda^-(y) \ dy \rightarrow 0$ in $C^0(K)$. This is easy because the regular part of the Green function for the ball is uniformly bounded, to be precise let $l(K):= \sup\{d(0,x), x \in K\} $, clearly, being $K$ a compact subset of $B_1 -\{0\}$, we have $l(K)<1$ and since it is well known that
$$G_{reg}(x,y)= \frac{1}{n(2-n)\omega_n}\frac{1}{\left|(|x||y|)^2+1-2x\cdot y\right|^{\frac{n-2}{2}}},$$ 
we have for all $x \in K$, $y \in A_{r_\lambda}$
\begin{equation}\label{gallrigl}
\begin{array}{lllll}
 \displaystyle\frac{1}{\left|(|x||y|)^2+1-2x\cdot y\right|^{\frac{n-2}{2}}} 
 &\leq& \displaystyle\frac{1}{\left|(1-|x||y|)^2\right|^{\frac{n-2}{2}}}\\[16pt]
    &\leq& \displaystyle\frac{1}{\left|1-l(K)\right|^{n-2}}\ .
    \end{array}
\end{equation}
Thus we have
\begin{eqnarray*}
\begin{array}{lll}
\displaystyle\left|\lambda^{\frac{n-6}{2n-8}} \int_{A_{r_\lambda}} G_{reg}(x,y) u_\lambda^-(y) \ dy\right|&\leq&\displaystyle c(K)\lambda^{\frac{n-6}{2n-8}}  \int_{A_{r_\lambda}}  |u_\lambda^-(y)| \ dy\\[16pt]
&=&\displaystyle c(K) \frac{\lambda^{\frac{n-6}{2n-8}}}{M_{\lambda,-}^{2^*}}  \int_{\tilde A_{r_\lambda}}  \left|u_\lambda^-\left(\frac{w}{M_{\lambda,-}^{\beta}}\right)\right| \ dw\\[16pt]
&=&\displaystyle c(K) \frac{\lambda^{\frac{n-6}{2n-8}}}{M_{\lambda,-}^{2^*-1}}  \int_{\tilde A_{r_\lambda}}  \left|\tilde u_\lambda^-(w)\right| \ dw.
\end{array}
\end{eqnarray*}
As in the previous case we see that $\displaystyle c(K) \frac{\lambda^{\frac{n-6}{2n-8}}}{M_{\lambda,-}^{2^*-1}}  \int_{\tilde A_{r_\lambda}}  \left|\tilde u_\lambda^-(w)\right| \ dw\rightarrow 0$ and the proof of $I_{2,\lambda} \rightarrow 0$ in $C^0(K)$ is complete.

Now to end the proof we need to show that $I_{4,\lambda} \rightarrow \tilde c(n) G(x,0)$ in $C^0(K)$. We start making the usual change of variable
\begin{eqnarray*}
I_{4,\lambda}= \displaystyle   \lambda^{-\frac{n-2}{2n-8}}\frac{1}{M_{\lambda,-}} \int_{\tilde A_{r_\lambda}} G\left(x,\frac{w}{M_{\lambda,-}^\beta}\right)  [\tilde u_\lambda^-(w)]^{2^*-1} \ dw.\\[16pt]
\end{eqnarray*}
We split the Green function in the singular and the regular part, so that
\begin{eqnarray*}
\begin{array}{lllll}
\displaystyle I_{4,\lambda} &=&  \displaystyle \frac{1}{n(2-n)\omega_n}   \frac{\lambda^{-\frac{n-2}{2n-8}}}{M_{\lambda,-}}  \int_{\tilde A_{r_\lambda}} \frac{1}{|x- \frac{w}{M_{\lambda,-}^\beta}|^{n-2}}  [\tilde u_\lambda^-(w)]^{2^*-1} \ dw\\[18pt]
 &+& \displaystyle   \frac{\lambda^{-\frac{n-2}{2n-8}}}{M_{\lambda,-}} \int_{\tilde A_{r_\lambda}} G_{reg}\left(x,\frac{w}{M_{\lambda,-}^\beta}\right)  [\tilde u_\lambda^-(w)]^{2^*-1} \ dw 
\end{array}
\end{eqnarray*}
We begin with the singular integral which is more delicate. We want to show that 
\begin{equation}\label{bowie}
\displaystyle   \frac{\lambda^{-\frac{n-2}{2n-8}}}{M_{\lambda,-}} \frac{1}{n(2-n)\omega_n} \int_{\tilde A_{r_\lambda}} \frac{1}{|x- \frac{w}{M_{\lambda,-}^\beta}|^{n-2}}  [\tilde u_\lambda^-(w)]^{2^*-1} \ dw \rightarrow \tilde c(n) G_{sing}(x,0) \ \ \hbox{in} \ C^0(K).
\end{equation}
As in the previous case we consider the ball $B(M_{\lambda,-}^\beta x,M_{\lambda,-}^\beta\eta) \subset \subset \tilde A_{r_\lambda}$, where $\eta>0$ is the same as before. Thus we have
\begin{eqnarray*}
\begin{array}{lllll}
 &&\displaystyle   \frac{\lambda^{-\frac{n-2}{2n-8}}}{M_{\lambda,-}} \frac{1}{n(2-n)\omega_n} \int_{\tilde A_{r_\lambda}} \frac{1}{|x- \frac{w}{M_{\lambda,-}^\beta}|^{n-2}}  [\tilde u_\lambda^-(w)]^{2^*-1} \ dw\\[16pt]
 &=&\displaystyle   \frac{\lambda^{-\frac{n-2}{2n-8}}}{M_{\lambda,-}} \frac{1}{n(2-n)\omega_n} \int_{|M_{\lambda,-}^\beta x - w|<M_{\lambda,-}^\beta\eta} \frac{M_{\lambda,-}^{(n-2)\beta}}{|M_{\lambda,-}^\beta x- w|^{n-2}}  [\tilde u_\lambda^-(w)]^{2^*-1} \ dw\\[16pt]
  &+&\displaystyle   \frac{\lambda^{-\frac{n-2}{2n-8}}}{M_{\lambda,-}} \frac{1}{n(2-n)\omega_n} \int_{\{|M_{\lambda,-}^\beta x - w|\geq M_{\lambda,-}^\beta\eta \}\ \cap\  \tilde A_{r_\lambda}} \frac{M_{\lambda,-}^{(n-2)\beta}}{|M_{\lambda,-}^\beta x- w|^{n-2}}  [\tilde u_\lambda^-(w)]^{2^*-1} \ dw\\[16pt]
  &:=& \tilde I_{C,\lambda} + \tilde I_{D,\lambda}.
\end{array}
\end{eqnarray*}
We show that $ \tilde I_{C,\lambda} \rightarrow 0$ in $C^0(K)$. As before, using the uniform upper bound (\ref{Geulbnp}) and (\ref{osservuup}) we get

\begin{eqnarray*}
\begin{array}{lll}
|\tilde I_{C,\lambda}|&=&  \displaystyle \frac{\lambda^{-\frac{n-2}{2n-8}}}{M_{\lambda,-}}  \frac{1}{n(n-2)\omega_n} \int_{|z|<M_{\lambda,-}^{\beta}\eta} \frac{{M_{\lambda,-}^{(n-2)\beta}}}{|z|^{n-2}} \left[\tilde u_\lambda^-\left(z+ {M_{\lambda,-}^{\beta}}x\right)\right]^{2^*-1} \ dz\\[16pt]
&\leq&  \displaystyle \frac{\lambda^{-\frac{n-2}{2n-8}}}{M_{\lambda,-}}  \frac{1}{n(n-2)\omega_n} \int_{|z|<M_{\lambda,-}^{\beta}\eta} \frac{{M_{\lambda,-}^{(n-2)\beta}}}{|z|^{n-2}}  \left[1+\frac{1}{n (n-2)} C |z+ {M_{\lambda,-}^{\beta}}x|^2\right]^{-(n+2)/2}  \ dz\\[16pt]
&\leq&  \displaystyle \frac{\lambda^{-\frac{n-2}{2n-8}}}{M_{\lambda,-}} c_1\int_{|z|<M_{\lambda,-}^{\beta}\eta} \frac{{M_{\lambda,-}^{(n-2)\beta}}}{|z|^{n-2}}  \left[M_{\lambda,-}^{2\beta} \eta^2 \right]^{-(n+2)/2}  \ dz\\[16pt]
&=&  \displaystyle \frac{\lambda^{-\frac{n-2}{2n-8}}}{M_{\lambda,-}} c_2(K) \int_{0}^{M_{\lambda,-}^{\beta}\eta} \frac{{M_{\lambda,-}^{(n-2)\beta}}}{r^{n-2}}  M_{\lambda,-}^{-(n+2)\beta}  r^{n-1}  \ dr\\[16pt]
&=&  \displaystyle \frac{\lambda^{-\frac{n-2}{2n-8}}}{M_{\lambda,-}} c_2(K)\frac{1}{M_{\lambda,-}^{4\beta}} \int_{0}^{M_{\lambda,-}^{\beta}\eta} r  \ dr
=  \displaystyle \frac{\lambda^{-\frac{n-2}{2n-8}}}{M_{\lambda,-}}  c_2(K) \frac{1}{M_{\lambda,-}^{4\beta}} \frac{M_{\lambda,-}^{2\beta}\eta^2}{2}\\[16pt]
&=&  \displaystyle c_3(K)\frac{\lambda^{-\frac{n-2}{2n-8}}}{M_{\lambda,-}}\frac{1}{M_{\lambda,-}^{2\beta}}.
\end{array}
\end{eqnarray*}
Since $\frac{\lambda^{-\frac{n-2}{2n-8}}}{M_{\lambda,-}}$ is bounded (see Proposition \ref{asestprop2} (iv) and Remark \ref{ovvio}) then $\tilde I_{C,\lambda} \rightarrow 0$ uniformly for $x \in K$.
Now we show that $\tilde I_{D,\lambda} \rightarrow \tilde c(n) G_{sing}(x,0)$ in $C^0(K)$. We have
\begin{eqnarray*}
  \tilde I_{D,\lambda} &=&\displaystyle   \frac{\lambda^{-\frac{n-2}{2n-8}}}{M_{\lambda,-}}  \frac{1}{n(2-n)\omega_n}  \int_{\{|x - \frac{w}{M_{\lambda,-}^\beta }|\geq \eta \}\ \cap\  \tilde A_{r_\lambda}} \frac{1}{|x- \frac{w}{M_{\lambda,-}^\beta }|^{n-2}}  [\tilde u_\lambda^-(w)]^{2^*-1} \ dw
\end{eqnarray*}

The first step is to prove that for all $w \in \R^n-\{0\}$
 \begin{equation}\label{convpuntgreen}
\chi(w)_{\left\{\{|x - \frac{w}{M_{\lambda,-}^\beta }|\geq \eta \}\ \cap\  \tilde A_{r_\lambda}\right\}}  \frac{1}{n(2-n)\omega_n}  \frac{1}{|x- \frac{w}{M_{\lambda,-}^\beta }|^{n-2}}  [\tilde u_\lambda^-(w)]^{2^*-1} \rightarrow  G_{sing}(x,0) \delta_{0,\mu}^{2^*-1}(w),
\end{equation}
 uniformly for $x \in K$. First, observe that we need only to show that
\begin{equation}\label{caprasgarbi}
  \frac{1}{|x- \frac{w}{M_{\lambda,-}^\beta }|^{n-2}}  [\tilde u_\lambda^-(w)]^{2^*-1} \rightarrow   \frac{1}{|x|^{n-2}} \delta_{0,\mu}^{2^*-1}(w)\ \ \ \hbox{in}\ C^0(K). 
  \end{equation}
 In fact if we fix $w \in  \R^n-\{0\} $, and $\lambda>0$ is sufficiently small so that $w \in \tilde A_{r_\lambda}$ and $\frac{w}{M_{\lambda,-}^\beta} < \frac{d(0,K)}{2}$  then we have $|x - \frac{w}{M_{\lambda,-}^\beta }|\geq \eta$, for all $x \in K$. Hence we get
$$ \left|\chi(w)_{\left\{\{|x - \frac{w}{M_{\lambda,-}^\beta }|\geq \eta \}\ \cap\  \tilde A_{r_\lambda}\right\}}-1\right|=\chi(w)_{\left\{{\{|x - \frac{w}{M_{\lambda,-}^\beta }|< \eta \} \cup  \tilde A_{r_\lambda}^c }\right\}}=0,$$
for all $x \in K$, for all $\lambda>0$ sufficiently small, from which we deduce that  $$\chi(w)_{\left\{\{|x - \frac{w}{M_{\lambda,-}^\beta }|\geq \eta \}\ \cap\  \tilde A_{r_\lambda}\right\}} \rightarrow 1\  \ \ \hbox{in}\ C^0(K).$$ 

Now the proof of (\ref{caprasgarbi})  is trivial if we show that, for any fixed $w \in \R^{n}-\{0\}$
\begin{equation}\label{sgarbi}
\left|  \frac{1}{\left|x- \frac{w}{M_{\lambda,-}^\beta}\right|^{n-2}} -  \frac{1}{|x|^{n-2}} \right|\leq c(K) \left| \frac{w}{M_{\lambda,-}^\beta}\right|
\end{equation}
for all $x \in K$ and for all $\lambda>0$ sufficiently small.  
This is an elementary computation but for the sake of completeness we give the proof.  
We observe that the segment  $\sigma\left(x,x-\frac{w}{M_{\lambda,-}^\beta}\right)$ joining $x$ and $x-\frac{w}{M_{\lambda,-}^\beta}$  is an uniformly bounded set and stays  away from the origin. In fact for all $x \in K$, $t \in [0,1]$ and for all $\lambda >0$ sufficiently small we have
\begin{equation}\label{sgarbi1}
 \left|x-t\frac{w}{M_{\lambda,-}^\beta}  \right| \leq |x| + |t|\left|\frac{w}{M_{\lambda,-}^\beta}\right| < 1 + \frac{d(0,K)}{2}
\end{equation}
\begin{equation}\label{sgarbi2}
\left|x-t\frac{w}{M_{\lambda,-}^\beta}  \right|\geq \left||x|-|t|\frac{|w|}{M_{\lambda,-}^\beta}  \right|\geq d(0,K) - t \frac{d(0,K)}{2}\geq\frac{d(0,K)}{2}.
\end{equation}

Thus, setting $g(x):=\frac{1}{|x|^{n-2}}$, by Lagrange's theorem we have $$g(x- \frac{w}{M_{\lambda,-}^\beta})-g(x)=\nabla g (\xi_{\lambda,x})\cdot\frac{w}{M_{\lambda,-}^\beta},$$ where $\xi_{\lambda,x}$ lies on  $\sigma\left(x,x-\frac{w}{M_{\lambda,-}^\beta}\right)$. By (\ref{sgarbi1}) and  (\ref{sgarbi2}) we deduce that $|\nabla g (\xi_{\lambda,x})|$ is uniformly bounded \footnote{by $\|\nabla g\|_{\infty,R(K)}$, where $R(K)$ is the compact annulus  $R(K):=\{x \in \R^n; \ \frac{d(0,K)}{2}\leq |x| \leq 1 + \frac{d(0,K)}{2}
 \}$} and (\ref{sgarbi}) is proved.

To complete the first part of the proof we apply Lebesgue's theorem.  For all $x \in K$, $w \in \R^n-\{0\}$ we have 

\begin{eqnarray*}
\begin{array}{lll}
&&\displaystyle \left|\chi_{\left\{\{|x - \frac{w}{M_{\lambda,-}^\beta }|\geq \eta \}\ \cap\  \tilde A_{r_\lambda}\right\}} \frac{1}{n(2-n)\omega_n} \frac{1}{|x- \frac{w}{M_{\lambda,-}^\beta }|^{n-2}}  [\tilde u_\lambda^-(w)]^{2^*-1} \right|\\[24pt]
 &\leq& \displaystyle\eta^{-(n-2)} \frac{1}{n(n-2)\omega_n} \frac{1}{|x|^{n-2}}  [U_{h}(w)]^{2^*-1}\\[16pt]
&=&\displaystyle c_1(K) [U_{h}(w)]^{2^*-1},
\end{array}
\end{eqnarray*}
where $U_{h}$ is the function defined in (\ref{uupperbound}). Since $(U_{h})^{2^*-1} \in L^1(\R^n)$ and thanks to (\ref{convpuntgreen}), (iv) of Proposition \ref{asestprop2},  by Lebesgue's theorem we deduce (\ref{bowie}), where $G_{sing}(x,0)= \frac{1}{n(2-n)\omega_n}\frac{1}{|x|^{n-2}}$, $\tilde c(n)= (\lim_{\lambda \rightarrow 0}\frac{\lambda^{-\frac{n-2}{2n-8}}}{M_{\lambda,-}}) \int_{\R^n}\delta_{0,\mu}^{2^*-1} (w) \ dw$. It's an elementary computation to see that $\tilde c(n)$ equals the expected constant $\omega_n \frac{c_2(n)^{\frac{n-2}{2n-8}}}{c_1(n)^{\frac{4}{2n-8}}}$, where $c_1(n), c_2(n)$ are the constants defined in  Proposition \ref{asestprop1}. And the proof of (\ref{bowie}) is done.

Finally we prove that  

\begin{equation}\label{bowie2}
\displaystyle   \frac{\lambda^{-\frac{n-2}{2n-8}}}{M_{\lambda,-}} \int_{\tilde A_{r_\lambda}} G_{reg}\left(x,\frac{w}{M_{\lambda,-}^\beta}\right)  [\tilde u_\lambda^-(w)]^{2^*-1} \ dw \rightarrow \tilde c(n) G_{reg}(x,0) \ \ \hbox{in} \ C^0(K).
\end{equation}
Since $$G_{reg}\left(x,\frac{w}{M_{\lambda,-}^\beta}\right)= \frac{1}{n(2-n)\omega_n} \frac{1}{\left||x|^2\frac{|w|^2}{M_{\lambda,-}^{2\beta}}+1-2x\cdot \frac{w}{M_{\lambda,-}^\beta}\right|^{\frac{n-2}{2}}}$$  by Lagrange theorem, repeating a similar argument as in the proof of (\ref{sgarbi}),  we deduce that for any fixed $w \in \R^n-\{0\}$
$$G_{reg}\left(x,\frac{w}{M_{\lambda,-}^\beta}\right) \rightarrow G_{reg}(x,0) \ \ \ \hbox{in} \ C^0(K). $$
Thus for any $w \in \R^n-\{0\}$ we have  $$G_{reg}\left(x,\frac{w}{M_{\lambda,-}^\beta}\right)  [\tilde u_\lambda^-(w)]^{2^*-1} \rightarrow  G_{reg}(x,0) \delta_{0,\mu}^{2^*-1}(w) \ \ \hbox{in} \ C^0(K).$$
Thanks to (\ref{gallrigl}) we know that $G_{reg}\left(x,\frac{w}{M_{\lambda,-}^\beta}\right)$ is uniformly bounded, moreover, as we have done in the proof of (\ref{bowie}), thanks to the upper bound (\ref{Geulbnp}),  Proposition \ref{asestprop2}  we deduce (\ref{bowie2}). 

To prove the local uniform convergence of  $\lambda^{-\frac{n-2}{2n-8}} \nabla u_\lambda$ to $\tilde c(n) \nabla G(x,0)$ we simply derive (\ref{formrappres}) and repeat the previous proof, taking into account that for $i=1,\ldots,n$ we have
$$\partial_{x_i}G_{sing}(x,y)=\frac{1}{n \omega_n} \frac{x_i-y_i}{|x-y|^{n}}.$$
\end{proof}

\end{document}